\newtheorem{defn}{Definition}[section]
\newtheorem{rem}[defn]{Remark}%[section]
\newtheorem{thm}[defn]{Theorem}%[section]
\newtheorem{lemma}[defn]{Lemma}%[section]
\newtheorem{prop}[defn]{Proposition}%[section]
\newtheorem{coro}[defn]{Corollary}
\newtheorem{ex}{Example}[section]
\newcommand\crule[3][black]{\textcolor{#1}{\rule{#2}{#3}}}
\newcommand{\ra}{\rightarrow}
\newcommand{\lra}{\longrightarrow}
\newcommand{\Ra}{\Rightarrow}
\newcommand{\midsp}{\;|\;}
\newcommand{\telos}{\hfill$\Box$}
\newcommand{\type}[1]{{\tt #1}}
\newcommand{\iso}{\backsimeq}
\newcommand{\upv}{\upVdash}
\newcommand{\rperp}{\mbox{${}^{\upv}$}}
\newcommand{\gphi}{{\mathcal  G}(Y)}
\newcommand{\gpsi}{{\mathcal  G}(X)}
\newcommand{\bbox}{\blacksquare}
\newcommand{\lperp}{{}\rperp}
\newcommand{\lbdiamond}{\raisebox{-4pt}{\mbox{\Huge {$\filleddiamond$}}}}
\newcommand{\lbbox}{\raisebox{-1.2pt}[0pt][0pt]{\crule[black]{0.27cm}{0.27cm}}\hspace*{1pt}}
\newcommand{\filt}{\mbox{\rm Filt}}
\newcommand{\idl}{\mbox{\rm Idl}}
\title{Duality for Normal Lattice Expansions and Sorted, Residuated Frames with Relations}
\author{Chrysafis (Takis) Hartonas\\
University of Thessaly, Greece\\
$\type{hartonas@uth.gr}$}
\begin{document}
\maketitle

\begin{abstract}
We revisit the problem of Stone duality for lattices with various quasioperators, first studied in \cite{sdl-exp}, presenting a fresh duality result. The new result is an improvement over that of \cite{sdl-exp} in two important respects. First, the axiomatization of frames in \cite{sdl-exp} is now simplified, partly by incorporating Gehrke's proposal \cite{mai-gen} of section stability for relations. Second, morphisms are redefined so as to preserve Galois stable (and co-stable) sets and we rely for this, partly again, on Goldblatt's \cite{goldblatt-morphisms2019} recently proposed definition of bounded morphisms for polarities.

In studying the dual algebraic structures associated to polarities with relations we demonstrate that stable/co-stable set operators result as the Galois closure of the restriction of classical (though sorted) image operators generated by the frame relations to Galois stable/co-stable sets. This provides a proof, at the representation level, that non-distributive logics can be viewed as fragments of sorted, residuated (poly)modal logics, a research direction initiated in \cite{pll7,redm}.
\end{abstract}
\vspace*{2mm}
\noindent
{\bf Keywords:}  Non-distributive logics, lattices with quasioperators, normal lattice expansions, Stone duality, polarities, polarities with relations

\section{Introduction}
\label{intro}
In this article we address and resolve the problem of Stone duality for the category of lattices with quasioperators. The work presented here is a significant improvement over our own \cite{sdl} (with Dunn) and \cite{sdl-exp}. The axiomatization of frames and frame relations in \cite{sdl-exp} was rather cumbersome and it is now simplified, partly by incorporating Gehrke's proposal \cite{mai-gen} of section stability for relations. Morphisms in the category of frames (polarities with relations) dual to lattices with quasioperators are defined so that not only clopens in the dual complex algebra of the frame be preserved (as in \cite{sdl,sdl-exp}) but so that the inverses of frame morphisms are morphisms of the complete lattices of stable sets dual to frames (their complex algebras), preserving arbitrary joins and meets. Polarity morphisms, as we define them, are the same as Goldblatt's bounded morphisms for polarities \cite{goldblatt-morphisms2019}, but we diverge from \cite{goldblatt-morphisms2019} in the extension to morphisms for polarities with relations. We also diverge from \cite{mai-gen,goldblatt-morphisms2019} in the definition of canonical relations and in the way operators are defined in frames from relations. We do this with the express objective of demonstrating that quasi-operators in the complex algebra of stable sets of a frame can be obtained as the Galois closure of the restriction to Galois sets of the classical though sorted image operators generated from the relations, in the J\'{o}nsson-Tarski style \cite{jt1}. The logical significance of this is that it demonstrates at the representation level that non-distributive logics are fragments of sorted residuated polymodal logics (their modal companions).

The structure of this article is as follows. Section \ref{normal lat exp section} is an introductory section, defining the category {\bf NLE}$_\tau$ of normal lattice expansions of some similarity type $\tau$ (a sequence of distribution types of lattice operators). 

In Section \ref{frames section} we present basic definitions and results for polarities (equivalently, sorted residuated frames). In Remark \ref{notation rem} we carefully list all our notational conventions, to ensure the reader has an effortless and seamless understanding of our notation. 

With Section  \ref{frames with relations section} we address the first issue of significance for our purposes, which is to define operators from relations, properly axiomatized, so as to ensure complete distribution properties of the defined operators. In the same section we list the first four axioms for the objects of the category {\bf SRF}$_\tau$ of sorted residuated frames (polarities) with relations. 

Section \ref{bounded section} turns to studying morphisms, first for frames in the absence of additional relations (Section \ref{polarity morphisms section}) and then for frames with additional relations of sort types in some similarity type $\tau$ (Section  \ref{morphisms for relations section}). Further axioms for frame morphisms are stated and the category {\bf SRF}$_\tau$ defined (Definition \ref{tau-frames defn}). 

Section \ref{representation section} defines a contravariant functor from the category {\bf NLE}$_\tau$ to the category {\bf SRF}$_\tau$. For lattice representation, we rely on \cite{sdl} and for the representation of normal lattice operators we draw on \cite{dloa} and \cite{sdl-exp} and we review the canonical frame construction in Section \ref{canonical frame section}.  Sections \ref{canonical-properties} and \ref{homos section} are devoted to proving that the frame axioms hold for the dual frame of a normal lattice expansion and for the duals of normal lattice expansion homomorphisms. 

Stone duality is addressed in Section \ref{stone section}. To ensure a Stone duality theorem can be proven we extend the axiomatization for frames (defining a smaller category ${\bf SRF}^*_\tau$), drawing on \cite{sdl,sdl-exp}, we verify that all additional axioms hold for the canonical frame construction and conclude with a Stone duality theorem.

In the Conclusions section \ref{conc} we summarize the results obtained and sketch a potential area for further research.

\section{Normal Lattice Expansions (NLE's)}
\label{normal lat exp section}
In this section, we introduce the category of normal lattice expansions of some given similarity type.

Let $\{1,\partial\}$ be a 2-element set, $\mathcal{L}^1=\mathcal{L}$ and $\mathcal{L}^\partial=\mathcal{L}^{op}$ (the opposite lattice). Extending established terminology \cite{jt1}, a function $f:\mathcal{L}_1\times\cdots\times\mathcal{L}_n\lra\mathcal{L}_{n+1}$ is {\em additive}, or a {\em normal operator}, if it distributes over finite joins of the lattice $\mathcal{L}_i$, for each $i=1,\ldots n$, delivering a join in $\mathcal{L}_{n+1}$.

\begin{defn}\rm
\label{normal lattice op defn}
An $n$-ary operation $f$ on a bounded lattice $\mathcal L$ is {\em a normal lattice operator of distribution type  $\delta(f)=(i_1,\ldots,i_n;i_{n+1})\in\{1,\partial\}^{n+1}$}  if it is a normal additive function  $f:{\mathcal L}^{i_1}\times\cdots\times{\mathcal L}^{i_n}\lra{\mathcal L}^{i_{n+1}}$ (distributing over finite joins in each argument place), where  each $i_j$, for  $j=1,\ldots,n+1$,   is in the set $\{1,\partial\}$, hence ${\mathcal L}^{i_j}$ is either $\mathcal L$, or ${\mathcal L}^\partial$.

If $\tau$ is a tuple (sequence) of distribution types, a {\em normal lattice expansion of (similarity) type $\tau$} is a lattice with a normal lattice operator of distribution type $\delta$ for each $\delta$ in $\tau$.
\end{defn}
Normal lattice operators in the above sense are sometimes referred to in the literature as {\em quasioperators}.

\begin{defn}\rm
The {\em category {\bf NLE}$_\tau$}, for a fixed similarity type $\tau$, has normal lattice expansions of type $\tau$ as objects. Its morphisms are the usual algebraic homomorphisms.
\end{defn}

\begin{ex}\rm
A modal normal diamond operator $\Diamond$ is a normal lattice operator of distribution type $\delta(\Diamond)=(1;1)$, i.e. $\Diamond:\mathcal{L}\lra\mathcal{L}$, distributing over finite joins of $\mathcal{L}$. A normal box operator $\Box$ is also a normal lattice operator in the sense of Definition \ref{normal lattice op defn}, of distribution type $\delta(\Box)=(\partial;\partial)$, i.e. $\Box:\mathcal{L}^\partial\lra\mathcal{L}^\partial$ distributes over finite joins of $\mathcal{L}^\partial$, which are then just meets of $\mathcal{L}$.

  An {\bf FL}$_{ew}$-algebra (also referred to as a full {\bf BCK}-algebra, or a commutative integral residuated lattice) $\mathcal{A}=(L,\wedge,\vee,0,1,\circ,\ra)$ is a normal lattice expansion, where $\delta(\circ)=(1,1;1)$, $\delta(\ra)=(1,\partial;\partial)$, i.e. $\circ:\mathcal{L}\times\mathcal{L}\lra\mathcal{L}$ and $\ra\;:\mathcal{L}\times\mathcal{L}^\partial\lra\mathcal{L}^\partial$ are both normal lattice operators with the familiar distribution properties.

  The Grishin operators \cite{grishin} $\leftharpoondown,\star,\rightharpoondown$, satisfying the familiar co-residuation conditions $a\geq c\leftharpoondown b$ iff $a\star b\geq c$ iff $b\geq a\rightharpoondown c$ have the respective distribution properties, which are exactly captured by assigning to them the distribution types $\delta(\star)=(\partial,\partial;\partial)$ ($\star$ behaves like a binary box operator), $\delta(\leftharpoondown)=(1,\partial;1)$ and $\delta(\rightharpoondown)=(\partial,1;1)$.
\end{ex}
Distributive normal lattice expansions are the special case where the underlying lattice is distributive. BAO's (Boolean Algebras with Operators) \cite{jt1,jt2} are the special case where the underlying lattice is a Boolean algebra and all normal operators distribute over finite joins of the Boolean algebra, i.e. they are all of distribution types of the form $(1,\ldots,1;1)$. For BAO's,  operators of other distribution types can be obtained by composition of operators with Boolean complementation. For example, in studying residuated Boolean algebras \cite{residBA}, J\'{o}nsson and Tsinakis introduce a notion of {\em conjugate operators} and they show that intensional implications (division operations) $\backslash,/$ (the residuals of the product operator $\circ$)  are interdefinable with the conjugates (at each argument place) $\lhd,\rhd$ of $\circ$, i.e. $a\backslash b=(a\rhd b^-)^-$ and $a\rhd b=(a\backslash b^-)^-$ (and similarly for $/$ and $\lhd$, see \cite{residBA} for details). Note that $\backslash, /$ are not operators, whereas $\lhd,\rhd$ are.

The relational representation of BAO's in \cite{jt1}, extending Stone's representation \cite{stone1} of Boolean algebras using the space of ultrafilters of the algebra, forms the technical basis of the subsequently introduced by Kripke %\cite{kripke1,kripke2,kripke3}
possible worlds semantics, with its well-known impact on the development of normal modal logics.
This has been extended to the case of the logics of distributive lattices with various quasioperators, see \cite{dunn-ggl} and \cite{Sofronie-Stokkermans00,Sofronie-Stokkermans00a}, for example, now based on the Priestley representation \cite{hilary} of distributive lattices in ordered Stone spaces (simplifying Stone's original representation \cite{stone2} of distributive lattices), using the space of prime filters.

For non-distributive lattices, a number of different representation results have been published and we have briefly reviewed the area in \cite{discr}. For the reader's convenience, the review is presented below as well.

Dating back in 1978, Alasdair Urquhart's representation of bounded lattices \cite{urq} was conceived with the express objective of reducing to the Priestley representation \cite{hilary} of distributive lattices, when the represented lattice is distributive. This was achieved by letting the points of the dual lattice frame be maximal disjoint filter-ideal pairs, which reduce to pairs of a prime filter and its complement (a prime ideal) in the distributive case. Coordinatewise ordering endows the dual space with two orderings, $(x,y)\leq_1(x',y')$ iff $x\subseteq x'$ and $(x,y)\leq_2(x',y')$ iff $y\subseteq y'$. In any doubly-ordered space $(U,\leq_1,\leq_2)$, where $\leq_i$ is a pre-order and it is assumed that $u\leq_1 u'$ and $u\leq_2 u'$ implies that $u=u'$, if $U^{\uparrow_i}\;(i=1,2)$ are the families of sets that are $\leq_1$-increasing and $\leq_2$-increasing, respectively, then the maps $r:(U^{\uparrow_2})^{op}\leftrightarrows U^{\uparrow_1}:\ell$ defined by
\begin{eqnarray*}
\ell D &=&\{u\in U\midsp \forall u'\in U\;(u\leq_1 u'\;\lra\; u'\not\in D)\}\\
rC &=&\{u\in U\midsp\forall u'\in U\;(u\leq_2 u'\;\lra\; u'\not\in C)\}
\end{eqnarray*}
form a Galois connection, $C\subseteq\ell D$ iff $D\subseteq rC$. A set $C$ is $\ell$-stable iff $C=\ell rC$ and similarly for $r$-stable sets. If the lattice $\mathcal{L}$ is distributive, then $\ell,r$ are both the set-complement operators. With
representation map defined by $\alpha(a)=\{(x,y)\midsp a\in x\}$,  Urquhart topologized the space via the subbasis $\{-\alpha(a)\midsp a\in\mathcal{L}\}$ $\cup\{-r\alpha(a')\midsp a'\in\mathcal{L}\}$ and characterized the image of the representation function as the collection of
all doubly-closed (both $A$ and $rA$ closed in the topology) $\ell$-stable sets. Urquhart's representation does not extend to a full categorical duality.

Allwein and Hartonas (1993) \cite{iulg-bounded} modified Urquhart's representation by merely dropping the maximality constraint, working with all disjoint filter-ideal pairs. This allowed for extending in \cite{iulg-bounded} the representation to a full categorical duality. While Urquhart's L-spaces become Priestley's  compact, totally order-disconnected spaces in the distributive case, this is no longer the case with the duality of \cite{iulg-bounded}. It is shown, however, that in the distributive case the EL-spaces of \cite{iulg-bounded} contain the Priestley space of the lattice, as a distinguished subspace and conditions are provided under which the dual space of a lattice is a space arising from a distributive lattice. The duality of \cite{iulg-bounded} went unnoticed for quite some time, till Craig, Haviar and Priestley (2013) \cite{craig2} revisited it and, combining with Plo\v{s}\v{c}ica's  lattice representation \cite{plo}, gave a new lattice representation and duality, in the spirit of natural dualities \cite{natural-duality}.

Miroslav Plo\v{s}\v{c}ica (1995) \cite{plo} observed that, given a maximal filter-ideal pair $(x,y)$ of a lattice $\mathcal{L}$, the partial map $f:\mathcal{L}\lra\overline{{\bf 2}}$ (where $\overline{{\bf 2}}$ designates the two-element lattice $\{0,1\}$)  defined by $f(a)=\left\{\begin{array}{cl} 1 & \mbox{ if } a\in x\\ 0 & \mbox{ if }a\in y\\ \mbox{undefined} & \mbox{ otherwise}\end{array}\right.$ is a maximal partial homomorphism (mph) and that for any mph $f:\mathcal{L}\lra\overline{{\bf 2}}$, the pair $(f^{-1}(1), f^{-1}(0))$ is a maximal filter-ideal pair. In addition, the two preorders $\leq_1,\leq_2$ of an Urquhart frame $(U,\leq_1,\leq_2)$ are re-captured in the space of maximal partial homomorphisms by setting $f\leq_1 g$ iff $f^{-1}(1)\subseteq g^{-1}(1)$ and $f\leq_2 g$ iff $f^{-1}(0)\subseteq g^{-1}(0)$.  Building on this correspondence of maximal partial homomorphisms and maximal filter-ideal pairs, Plo\v{s}\v{c}ica presented an alternative lattice representation, by recasting the Urquhart representation in terms of maximal partial homomorphisms. The dual spaces of lattices are defined to be graphs with a topology $\tau$ on the carrier set $X$, $(X,E,\tau)$, where $E\subseteq X\times X$ is a reflexive binary relation, and morphisms of graphs with a topology are defined as the continuous graph homomorphisms. A partial map between topologized graphs is a partial morphism if its domain is a closed set (in the respective topology) and its restriction to its domain is a morphism of topologized graphs.  Letting $D(\mathcal{L})=(\mathcal{L}^{mp}(\mathcal{L},\overline{{\bf 2}}),E)$ be the set $\mathcal{L}^{mp}(\mathcal{L},\overline{{\bf 2}})$ of mph's  $f:\mathcal{L}\lra\overline{{\bf 2}}$ equipped with an ordering relation $fEg$ iff $f(a)\leq g(a)$ for all $a\in{\rm dom}(f)\cap{\rm dom}(g)$ and topologized with the subbasis of all sets of the form $A_a=\{f\in D(\mathcal{L})\midsp f(a)=0\}, B_a=\{f\in D(\mathcal{L})\midsp f(a)=1\}$, for each lattice element $a$,  letting also $\tilde{{\bf 2}}$ designate the set $\{0,1\}$ with the discrete topology and the obvious ordering $0<1$, Plo\v{s}\v{c}ica showed that the lattice $\mathcal{L}$ is isomorphic to the lattice of maximal partial morphisms of topologized graphs $\phi:D(\mathcal{L})\lra\tilde{{\bf 2}}$, ordered by the rule $\phi\leq \psi$ iff $\phi^{-1}(1)\subseteq \psi^{-1}(1)$, where the isomorphism relies on the fact that is verified in \cite{plo} that every maximal partial morphism  $e:D(\mathcal{L})\lra\tilde{{\bf 2}}$ is an evaluation map $e_a$, for some $a\in\mathcal{L}$, $e_a(f)=1$ iff $f(a)=1$. A discussion of extending the representation to a full duality is not provided in \cite{plo}.

Hartung (1992) \cite{hartung}, motivated by Wille's formal concept analysis (FCA) \cite{wille2} represented lattices as sublattices of the formal concept lattice of a clarified and reduced formal context $(X,I,Y)$. The context relation $I$ of a formal context $(X,I,Y)$ generates a Galois connection on subsets of $X$ and $Y$, typically designated in FCA using a priming notation: $U'=\{y\in Y\midsp\forall x\in X\;(x\in U\lra xIy)\}$, for $U\subseteq X$, and $V'=\{x\in X\midsp \forall y\in Y\;(y\in V\lra xIy)\}$, for $V\subseteq Y$.  Hartung's dual lattice spaces are the topological formal contexts $((X,\rho), (Y,\sigma),I)$ such that (i) if $U\subseteq X$ is closed in the topology $\rho$, then so is its Galois-closure $U''$ and similarly for $V\subseteq Y$ and (ii) the set of topologically closed and Galois stable $A=A''$ subsets $A\subseteq X$ such that $A'$ is closed in the topology $\sigma$ on $Y$ is a subbasis of closed sets for $\rho$ (i.e. $\rho$ is generated by taking arbitrary intersections of finite unions of subbasis elements) and similarly for the topologically closed and Galois stable $B=B''$ subsets $B\subseteq Y$. The set of closed concepts of a formal context is defined to contain the concepts $(A,B)$ such that $A,B$ are closed sets in the respective topologies. In the canonical formal context $(X,I,Y)$, the sets $X=U_1,Y=U_2$ are in fact the first and second projections of Urquhart's space $U$ of maximally disjoint filter-ideal pairs and $I\subseteq X\times Y$ is defined by $xIy$ iff $x\cap y\neq\emptyset$. A lattice element $a$ is represented as the formal concept $(A,B)$, where $A=\{x\in X\midsp a\in x\}$ and $B=A'$ and the lattice $\mathcal{L}$ is identified as the lattice of closed concepts of the canonical formal context of $\mathcal{L}$. Hartung's representation, too, does not extend to a full duality.

Hartonas-Dunn (1993) \cite{iulg}, published in (1997) \cite{sdl}, provide a lattice representation and duality result that is based on the representation of semilattices and of Galois connections of \cite{iulg} and abstracts on Goldblatt's \cite{goldb} representation of ortholattices, replacing orthocomplementation with the trivial Galois connection (the identity map) $\imath:\mathcal{L}_\wedge\leftrightarrows\mathcal{L}_\vee^{op}$ between the meet sub-semilattice and the join sub-semilattice of a lattice $\mathcal{L}$. Galois connections between meet semilattices (or merely partial orders) $\lambda:\mathcal{S}_1\leftrightarrows\mathcal{S}_2^{op}:\rho$  are represented in \cite{iulg} as binary relations $R\subseteq\filt(\mathcal{S}_1)\times\filt(\mathcal{S}_2)$ defined by $Rxy$ iff $\exists a\in x\;\lambda a\in y$, which then specializes in the case of the trivial Galois connection (in fact, duality) $\imath:\mathcal{L}_\wedge\leftrightarrows\mathcal{L}_\vee^{op}$ to the relation $\upv\;\subseteq \filt(\mathcal{L})\times\idl(\mathcal{L})$ defined by $x\upv y$ iff $x\cap y\neq\emptyset$. The dual objects of bounded lattices are sorted, partially-ordered topological frames $(X,\upv,Y)$, with Stone topologies induced on each of the partial orders $X,Y$ by subbases $X^*,Y^*$ of subsets $C_j\subseteq X, D_j\subseteq Y, j\in J$ and their complements, for some indexing set $J$ such that (i) each of $X^*, Y^*$ is closed under binary intersections, (ii) the family of principal upper sets $\Gamma x=\{z\in X\midsp x\leq z\}$, with $x\in X$, is the intersection closure of $X^*$, i.e.  $\{\bigcap_{t\in J'}C_t\midsp J'\subseteq J\}=\{\Gamma x\midsp x\in X\}$ and similarly for $Y$. This is equivalent to the properties (a) $X,Y$ are complete lattices (b) each $C_j\in X^*$ is a principal upper set $\Gamma x_j$, for some $x_j\in X$ (and similarly for $Y$) and (c) the set $\{x_j\midsp\Gamma x_j\in X^*\}$ join-generates the complete lattice $X$ (and similarly for $Y$). Lattice elements are represented as the Galois stable sets $\{x\in X\midsp a\in x\}=\Gamma x_a$, where $x_a$ designates a principal filter, and co-represented as the co-stable sets $\{y\in Y\midsp a\in y\}=\Gamma y_a$, where $y_a$ designates a principal ideal and a lattice $\mathcal{L}$ is identified as the lattice of Galois stable, compact-open (clopen) subsets of $X$. Defining frame morphism by the requirement that their inverses preserve clopen sets, the representation is extended in \cite{sdl} to a full categorical duality.

Canonical extensions for bounded lattices were introduced by Gehrke and Harding \cite{mai-harding}, extending to the non-distributive case the notion of a perfect extension of Boolean algebras of J\'{o}nsson and Tarski \cite{jt1,jt2} and of a canonical extension of distributive lattices of J\'{o}nsson and Gehrke \cite{mai-jons}. A canonical lattice extension is defined as a pair $(\alpha,C)$ where $C$ is a complete lattice and $\alpha$ is an embedding of a lattice $\mathcal{L}$ into $C$ such that the following density and compactness requirements are satisfied
\begin{quote}
$\bullet$ (density) $\alpha[{\mathcal L}]$ is {\em dense} in $C$, where the latter means that every element of $C$ can be expressed both as a meet of joins and as a join of meets of elements in $\alpha[{\mathcal L}]$\\[0.5mm]
$\bullet$ (compactness) for any set $A$ of closed elements and any set  $B$ of open elements of $C$, $\bigwedge A\leq\bigvee B$ iff there exist finite subcollections $A'\subseteq A, B'\subseteq B$ such that $\bigwedge A'\leq\bigvee B'$
\end{quote}
where the {\em closed elements} of $C$ are defined in \cite{mai-harding} as the elements in the meet-closure of the representation map $\alpha$ and the {\em open elements} of $C$ are defined dually as the join-closure of the image of $\alpha$. Existence of canonical extensions is proven in \cite{mai-harding} by demonstrating that the compactness and density requirements are satisfied in the representation due to Hartonas and Dunn \cite{sdl}.

Andrew Craig in his PhD dissertation \cite{craig-phd} and in joint work with Haviar and Priestley \cite{craig2} and with Gouveia \cite{craig} revisited Plo\v{s}\v{c}ica's representation and, by combining with the Allwein-Hartonas representation \cite{iulg-bounded}, they showed how a canonical lattice extension can be derived from Plo\v{s}\v{c}ica's representation, with an extension to a functorial duality (in the spirit of natural dualities \cite{natural-duality}). For the first task,  the untopologized graph structure $(D(\mathcal{L}),E)$, designated by $D^\flat(\mathcal{L})$ in \cite{craig-phd}, is considered and it is shown that for any graph structure ${\bf X}=(X,E)$, the set $\mathcal{G}^{mp}({\bf X}, \underaccent{\tilde}{{\bf 2}})$ of partial graph morphisms into the two element graph ${0<1}$ is a complete lattice and, in particular, $\mathcal{G}^{mp}(D^\flat(\mathcal{L}),\underaccent{\tilde}{{\bf 2}})$ is a canonical extension of the lattice $\mathcal{L}$. This canonical extension construction, however, does not immediately lend itself as a basis for a full functorial duality.
For the task of extending to a duality, it is observed in \cite{craig,craig2,craig-phd} that whereas Plo\v{s}\v{c}ica's maximal partial homomorphisms are in bijective correspondence with Urquhart's \cite{urq} maximal disjoint filter-ideal pairs, the disjoint filter-ideal pairs of Allwein and Hartonas \cite{iulg-bounded} are in bijective correspondence with the set $\mathcal{L}^{sp}(\mathcal{L},\underline{{\bf 2}})$ of partial lattice homomorphisms $f:\mathcal{L}\lra\underline{{\bf 2}}$\footnote{Note that the two element lattice is designated by Plo\v{s}\v{c}ica as $\overline{{\bf 2}}$, whereas Craig, Haviar and Priestley write $\underline{{\bf 2}}$.} for which the pair $(f^{-1}(1),f^{-1}(0))$ is a disjoint filter-ideal pair. The Plo\v{s}\v{c}ica representation and the canonical extension construction are then recast as needed and it is shown that a categorical duality is also provable in this combined setting.

A Stone type duality for normal lattice expansions however has only first been presented in \cite{sdl-exp}. Part of the difficulty was in defining an appropriate notion of morphism and Goldblatt \cite{goldblatt-morphisms2019} reviews the related attempts to this issue. In \cite{sdl-exp} this problem was somewhat side-stepped, by restricting the definition of frame morphisms to such that their inverses are homomorphisms of the sublattices of clopen elements of the complex algebras of the frames, as in the lattice duality of \cite{sdl}. Another source of difficulty has been to define operators from suitably axiomatized relations on frames, so that the framework can serve the semantics of logics without distribution as J\'{o}nsson and Tarski's BAO's \cite{jt1} have served to the semantics of modal logics. In \cite{sdl-exp} we proposed an axiomatization of frames and relations, though the axiomatization appears somewhat forced and we provide a significant improvement in the present article.

\section{Sorted Residuated Frames (SRF's)}
\label{frames section}
Regard $\{1,\partial\}$ as a set of sorts and let $Z=(Z_1,Z_\partial)$ be a sorted set.
Sorted, residuated frames $\mathfrak{F}=(Z,I)=(Z_1,I,Z_\partial)$ are triples consisting of nonempty sets $Z_1=X,Z_\partial=Y$ and a binary relation $I\subseteq X\times Y$. The relation $I$ generates residuated operators $\largediamond:\powerset(X)\leftrightarrows\powerset(Y):\lbbox$ ($U\subseteq \lbbox V$ iff $\largediamond U\subseteq V$), defined by
\[
\largediamond U=\{y\in Y\midsp\exists x\in X\;(xIy\wedge x\in D)\}\hskip3mm \lbbox V=\{x\in X\midsp\forall y\in Y\;(xIy\lra y\in V)\}
\]
The dual sorted, residuated modal algebra of the (sorted, residuated) frame $(X,I,Y)$  is the algebra $\largediamond:\powerset(X)\leftrightarrows\powerset(Y):\lbbox$. By residuation, the compositions $\lbbox\largediamond$ and $\largesquare\lbdiamond$ are closure operators on $\powerset(X)$ and $\powerset(Y)$, respectively.

For a sorted frame $(X,I,Y)$, the complement of the frame relation $I$ will be consistently designated by $\upv$ and referred to as the {\em Galois relation} of the frame. It generates a Galois connection $(\;)\rperp:\powerset(X)\leftrightarrows\powerset(Y)^\partial:\lperp(\;)$ ($V\subseteq U\rperp$ iff $U\subseteq\lperp V$)
\begin{eqnarray*}
U\rperp&=\{y\in Y\midsp\forall x\in U\; x\upv y\} &= \{y\in Y\midsp U\upv y\}\\%\nonumber\\
\lperp V&=\{x\in X\midsp \forall y\in V\;x\upv y\}&=\{x\in X\midsp x\upv V\}%\label{galois}
\end{eqnarray*}

Triples $(X,R,Y), R\subseteq X\times Y$, where $R$ is treated as the Galois relation of the frame, are variously referred to in the literature as {\em polarities}, after Birkhoff \cite{birkhoff}, as {\em formal contexts}, in the Formal Concept Analysis (FCA) tradition \cite{wille2}, or as {\em object-attribute (categorization, classification, or information) systems} \cite{ewa-rfca,rough-vakarelov}, or as {\em generalized Kripke frames} \cite{mai-gen}, or as {\em polarity frames} in the bi-approximation semantics of \cite{Suzuki-polarity-frames}.

Note that the residuated and Galois connected operators generate the same closure operators, on $\powerset(X)$,  $\lbbox\largediamond U={}\rperp(U\rperp)$ and on $Y$, $\largesquare\lbdiamond V=({}\rperp V)\rperp$. This follows from the fact that ${}\rperp V=\lbbox(-V)$ and $U\rperp=\largesquare(-U)$.

\begin{prop}\rm
The discrete categories of polarities and sorted residuated frames are equivalent.\telos
\end{prop}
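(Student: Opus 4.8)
The plan is to observe that an object of either category is nothing more than a pair of nonempty sorted sets $X,Y$ together with a single binary relation between them; the two categories differ only in whether that relation is read as the frame relation $I$ or as the Galois relation $\upv$. Since, as recorded immediately above the statement, these two readings are interchanged by set-theoretic complementation (by definition $\upv$ is the complement of $I$ in $X\times Y$, equivalently $I=-\upv$), the correspondence between the two classes of objects will be given simply by complementing the relation.

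Concretely, I would define a functor $F$ from polarities to sorted residuated frames by $F(X,R,Y)=(X,-R,Y)$, where $-R=(X\times Y)\setminus R$, and a functor $G$ in the opposite direction by $G(X,I,Y)=(X,-I,Y)$. Because both categories are discrete --- their only morphisms are the identities --- each functor is forced to act as the identity on arrows, so the only thing left to verify is that $F$ and $G$ are mutually inverse on objects. This is immediate from the involutivity of complementation: $-(-R)=R$ gives $G\circ F=\mathrm{id}$, and symmetrically $-(-I)=I$ gives $F\circ G=\mathrm{id}$.

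Thus $F$ and $G$ exhibit not merely an equivalence but an isomorphism of the two discrete categories, which in particular yields the claimed equivalence. The only point worth stating carefully is that the underlying sorted carrier $(X,Y)$ is left untouched throughout, so that complementation genuinely determines a bijection between the relations $I\subseteq X\times Y$ and the Galois relations $\upv\subseteq X\times Y$ on the same carrier; there is no further coherence or naturality condition to discharge precisely because there are no non-identity morphisms to track. Hence I expect no real obstacle: the entire content of the statement reduces to the elementary fact that complementation is an involution on $X\times Y$, and the residuated reading of $I$ versus the Galois reading of $\upv$ is just a relabelling of one and the same datum.
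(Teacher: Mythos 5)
Your argument is correct and matches the paper's intent exactly: the proposition is left as an immediate observation, justified by the surrounding remark that $\upv$ is by definition the complement of $I$ on the same carrier, so the two discrete categories are identified by complementing the relation. Your explicit note that complementation is an involution and that discreteness leaves no morphism conditions to check is precisely the (unstated) content of the paper's proof.
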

The equivalence allows us to move in our arguments and definitions back-and-forth between sorted residuated frames and polarities.
Indeed, for our purposes, both the residuated pairs $\lbbox,\largediamond$ and $\largesquare,\lbdiamond$, as well as the Galois connection will be involved in definitions and arguments and we do not differentiate between polarities $(X,\upv,Y)$ and their associated sorted residuated frames $(X,I,Y)$, with $I$ being the complement of $\upv$.

A subset $A\subseteq X$ will be called {\em stable} if $A=\lbbox\largediamond A={}\rperp(A\rperp)$. Similarly, a subset $B\subseteq Y$ will be called {\em co-stable} if $B=\largesquare\lbdiamond B=({}\rperp B)\rperp$. Stable and co-stable sets will be referred to as {\em Galois sets}, disambiguating to {\em Galois stable} or {\em Galois co-stable} when needed and as appropriate.

$\gpsi,\gphi$ designate the complete lattices of stable and co-stable sets, respectively. Note that the Galois connection restricts to a duality $(\;)\rperp:\gpsi\iso\gphi^\partial:{}\rperp(\;)$.

Preorder relations are induced on each of the sorts, by setting for $x,z\in X$, $x\preceq z$ iff $\{x\}\rperp\subseteq\{z\}\rperp$ and, similarly, for $y,v\in Y$, $y\preceq v$ iff ${}\rperp\{y\}\subseteq{}\rperp\{v\}$.
A (sorted) frame is called {\em separated} if the preorders $\preceq$ (on $X$ and on $Y$) are in fact partial orders $\leq$.

\begin{rem}[{\bf Notational Conventions}]\rm
\label{notation rem}
For a sorted relation $R\subseteq\prod_{j=1}^{j=n+1}Z_{i_j}$, where $i_j\in\{1,\partial\}$ for each $j$ (and thus $Z_{i_j}=X$ if $i_j=1$ and $Z_{i_j}=Y$ when $i_j=\partial$), we make the convention to regard it as a relation $R\subseteq Z_{i_{n+1}}\times\prod_{j=1}^{j=n}Z_{i_j}$, we agree to write its sort type as $\sigma(R)=(i_{n+1};i_1\cdots i_n)$ and for a tuple of points of suitable sort we write $uRu_1\cdots u_n$ for $(u,u_1,\ldots,u_n)\in R$. We often display the sort type as a superscript, as in $R^\sigma$. Thus, for example, $R^{\partial 1\partial}$ is a subset of  $Y\times(X\times Y)$. In writing then $yR^{\partial 1 \partial}xv$ it is understood that $x\in X=Z_1$ and $y,v\in Y=Z_\partial$. The sort superscript is understood as part of the name designation of the relation, so that, for example, $R^{111}, R^{\partial\partial 1}$ name two different relations.

We use $\Gamma$ to designate  upper closure  $\Gamma U=\{z\in X\midsp\exists x\in U\;x\preceq z\}$, for $U\subseteq X$, and similarly for $U\subseteq Y$. $U$ is {\em increasing} (an upset) iff $U=\Gamma U$. For a singleton set $\{x\}\subseteq X$ we write $\Gamma x$, rather than $\Gamma(\{x\})$ and similarly for $\{y\}\subseteq Y$.

We typically use the standard FCA \cite{wille2} priming notation for each of the two Galois maps ${}\rperp(\;),(\;)\rperp$. This allows for stating and proving results for each of $\gpsi,\gphi$ without either repeating definitions and proofs, or making constant appeals to duality. Thus for a Galois set $G$, $G'=G\rperp$, if $G\in\gpsi$ ($G$ is a Galois stable set), and otherwise $G'={}\rperp G$, if $G\in\gphi$ ($G$ is a Galois co-stable set).

For an element $u$ in either $X$ or $Y$ and a subset $W$, respectively of $Y$ or $X$, we write $u|W$, under a well-sorting assumption, to stand for either $u\upv W$ (which stands for $u\upv w$, for all $w\in W$), or $W\upv u$ (which stands for $w\upv u$, for all $w\in W$), where well-sorting means that either $u\in X, W\subseteq Y$, or $W\subseteq X$ and $u\in Y$, respectively. Similarly for the notation $u|v$, where $u,v$ are elements of different sort.

We designate $n$-tuples (of sets, or elements) using a vectorial notation, setting  $(G_1,\ldots,G_n)=\vec{G}\in\prod_{j=1}^{j=n}\mathcal{G}(Z_{i_j})$, $\vec{U}\in\prod_{j=1}^{j=n}\powerset(Z_{i_j})$, $\vec{u}\in\prod_{j=1}^{j=n}Z_{i_j}$ (where $i_j\in\{1,\partial\}$).   Most of the time we are interested in some particular argument place $1\leq k\leq n$ and we write $\vec{G}[F]_k$ for the tuple $\vec{G}$ where $G_k=F$ (or $G_k$ is replaced by $F$). Similarly $\vec{u}[x]_k$ is $(u_1,\ldots,u_{k-1},x,u_{k+1},\ldots,u_n)$.

For brevity, we write $\vec{u}\preceq\vec{v}$ for the pointwise ordering statements $u_1\preceq v_1,\ldots,u_n\preceq v_n$. We also let $\vec{u}\in\vec{W}$ stand for the conjunction of componentwise membership $u_j\in W_j$, for all $j=1,\ldots,n$.

To simplify notation, we write $\Gamma\vec{u}$ for the $n$-tuple $(\Gamma u_1,\ldots,\Gamma u_n)$. For a unary map $f$ and a tuple $\vec{u}$ we write $f[\vec{u}]$ for the tuple $(f(u_1),\ldots,f(u_n))$. Note that the same notation is used for the image $f[S]=\{fx\midsp x\in S\}$ of a set under a function $f$, but context will make it clear what the intended meaning is. The convention can be nested, so that if $S$ is a set (or sequence) of tuples $\vec{u}_i$, then $f[S]$ is the set (or sequence) consisting of the elements $f[\vec{u}_i]$.

To refer to sections of relations (the sets obtained by leaving one argument place unfilled) we make use of the notation $\vec{u}[\_]_k$ which stands for the $(n-1)$-tuple $(u_1,\ldots,u_{k-1},[\_]\;,u_{k+1},\ldots,u_n)$ and similarly for tuples of sets, extending the membership convention for tuples to cases such as $\vec{u}[\_]_k\in\vec{F}[\_]_k$ and similarly for ordering relations $\vec{u}[\_]_k\preceq\vec{v}[\_]_k$. We also quantify over tuples (with, or without a hole in them), instead of resorting to an iterated quantification over the elements of the tuple, as for example in $\exists\vec{u}[\_]_k\in\vec{F}[\_]_k\exists v,w\in G\;wR\vec{u}[v]_k$.

We extend the vectorial notation to distribution types, summarily writing $\delta=(\vec{i_j};i_{n+1})$ for $(i_1,\ldots,i_n;i_{n+1})$. Then, for example, $\vec{i_j}[\partial]_k$ is the tuple with $i_k=\partial$. Furthermore, we let $\overline{i_j}=\partial$, if $i_j=1$ and $\overline{i_j}=1$, when $i_j=\partial$.
\end{rem}

\begin{lemma}\rm
\label{basic facts}
Let $\mathfrak{F}=(X,\upv,Y)$ be a  polarity and $u\in Z=X\cup Y$.
\begin{enumerate}
\item $\upv$ is increasing in each argument place (and thereby its complement $I$ is decreasing in each argument place)
\item $(\Gamma u)'=\{u\}'$ and $\Gamma u=\{u\}^{\prime\prime}$ is a Galois set
\item Galois sets are increasing, i.e. $u\in G$ implies $\Gamma u\subseteq G$
\item For a Galois set $G$, $G=\bigcup_{u\in G}\Gamma u$
\item For a Galois set $G$, $G=\bigvee_{u\in G}\Gamma u=\bigcap_{v|G}\{v\}'$ .
\item For a Galois set $G$ and any set $W$, $W^{\prime\prime}\subseteq G$ iff $W\subseteq G$.
\end{enumerate}
\end{lemma}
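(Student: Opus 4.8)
The plan is to obtain the equivalence from two ingredients already in hand: the fact, recorded in the excerpt, that the double prime $(\;)''$ is a closure operator (it is $\lbbox\largediamond=\lperp(\,(\;)\rperp\,)$ on $\powerset(X)$ and $\largesquare\lbdiamond=(\lperp(\;))\rperp$ on $\powerset(Y)$), together with the defining property of a Galois set, $G''=G$. Before anything else I would make the implicit well-sorting explicit: for $W''$ to be comparable with $G$ we need $W$ and $G$ to live in the same sort, i.e. $W\subseteq X$ when $G\in\gpsi$ and $W\subseteq Y$ when $G\in\gphi$; under the priming convention of Remark \ref{notation rem} the prime then always denotes the appropriate one of the two Galois maps, and $W''$ is again a set of the same sort as $G$.

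For the direction $W\subseteq G\Rightarrow W''\subseteq G$, I would use that the prime is inclusion-reversing — an immediate reading of the adjunction $V\subseteq U\rperp$ iff $U\subseteq\lperp V$ — so that applying it twice is monotone; hence $W\subseteq G$ gives $W''\subseteq G''$, and since $G$ is a Galois set we have $G''=G$, whence $W''\subseteq G$.

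For the converse $W''\subseteq G\Rightarrow W\subseteq G$, I would invoke extensivity of the closure operator, $W\subseteq W''$ (obtained by instantiating $V:=U\rperp$ in the adjunction, which forces $U\subseteq\lperp(U\rperp)=U''$), and conclude $W\subseteq W''\subseteq G$ by transitivity. I do not anticipate any genuine obstacle: the whole argument is the standard extensivity/monotonicity/idempotency package for Galois closures, and the only thing demanding attention is the bookkeeping of sorts and of which Galois map each prime denotes — precisely what the notational conventions are set up to absorb.
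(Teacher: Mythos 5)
Your argument for item 6 is correct and is exactly the ``simple calculation'' the paper has in mind: antitonicity of each Galois map makes $(\;)''$ monotone, extensivity gives $W\subseteq W''$, and $G''=G$ closes the loop. Your attention to well-sorting is appropriate and, as you note, is absorbed by the conventions of Remark \ref{notation rem}.

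The gap is that the statement has six items and your proposal addresses only the last one; items 1--5 are nowhere discussed. Most are equally routine, but they are not free of content. Item 1 needs the unfolding of $\preceq$: if $x\upv y$ and $x\preceq z$, i.e.\ $\{x\}'\subseteq\{z\}'$, then $y\in\{x\}'\subseteq\{z\}'$, so $z\upv y$, and dually in the second coordinate. Item 2 is the computation $(\Gamma u)'=\{u\}'$ (one inclusion because $u\in\Gamma u$, the other by item 1), from which $\{u\}''={}$the set of $z$ with $\{u\}'\subseteq\{z\}'$, which is $\Gamma u$ by definition of $\preceq$; Galois-closedness of $\Gamma u$ then follows from idempotency. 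Item 3 follows from item 2 together with your item 6 (from $u\in G$ one gets $\Gamma u=\{u\}''\subseteq G''=G$). Item 4 needs reflexivity of $\preceq$ for $G\subseteq\bigcup_{u\in G}\Gamma u$ and item 3 for the reverse inclusion. Item 5 needs two further observations: that the join in $\gpsi$ (resp.\ $\gphi$) is the closure of the union, so $\bigvee_{u\in G}\Gamma u=\bigl(\bigcup_{u\in G}\Gamma u\bigr)''=G''=G$ by item 4, and that $G=G''$ unfolds, by definition of the Galois maps, to the intersection $\bigcap_{v|G}\{v\}'$ over all $v$ of the opposite sort with $v|G$. None of this is deep, but a complete proof of the lemma must say it; as written, your proposal establishes one sixth of the claim.
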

\begin{proof}
  By simple calculation. Proof details are included in \cite{sdl-exp}, Lemma 2.2.  For claim 4, $\bigcup_{u\in G}\Gamma u\subseteq G$ by claim 3 (Galois sets are upsets). In claim 5, given our notational conventions, the claim is that if $G\in\gpsi$, then $G=\bigcap_{G\upv y}{}\rperp\{y\}$ and if $G\in\gphi$, then $G=\bigcap_{x\upv G}\{x\}\rperp$.
\end{proof}

\begin{defn}[Closed and Open Elements]\rm
The principal upper sets of the form $\Gamma x$, with $x\in X$, will be called {\em closed}, or {\em filter} elements of $\gpsi$, while sets of the form ${}\rperp\{y\}$, with $y\in Y$, will be referred to as {\em open}, or {\em ideal} elements of $\gpsi$. Similarly for $\gphi$. A closed element $\Gamma u$ is {\em clopen} iff there exists an element $v$, with $u|v$, such that $\Gamma u=\{v\}'$.
\end{defn}
 By Lemma \ref{basic facts}, the closed elements of $\gpsi$  join-generate $\gpsi$, while the open elements meet-generate $\gpsi$ (similarly for $\gphi$).

\begin{defn}[Galois Dual Relation]\rm\label{Galois dual relations}
For a relation $R$, of sort type $\sigma$, its {\em Galois dual} relation $R'$ is the relation defined by $uR'\vec{v}$ iff $\forall w\;(wR\vec{v}\lra w|u)$. In other words, $R'\vec{v}=(R\vec{v})'$.
\end{defn}
For example, given a relation $R^{111}$ its Galois dual is the relation $R^{\partial 11}$ where for any $x,z\in X$, $R^{\partial 11}xz=(R^{111}xz)\rperp=\{y\in Y\midsp\forall u\in X\;(uR^{111}xz\lra u\upv y)\}$ and, similarly, for a relation $S^{\partial 1\partial}$ its Galois dual is the relation $S^{11\partial}$ where for any $z\in X, v\in Y$ we have $S^{11\partial}zv={}\rperp(S^{\partial 1\partial}zv)$, i.e. $xS^{11\partial}zv$ holds iff for all $y\in Y$, if $yS^{\partial 1\partial}zv$ obtains, then $x\upv y$.

\begin{defn}[Sections of Relations]\rm
\label{sections defn}
For an $(n+1)$-ary relation $R^\sigma$ and an $n$-tuple $\vec{u}$, $R^\sigma\vec{u}=\{w\midsp wR^\sigma\vec{u}\}$ is the {\em section} of $R^\sigma$ determined by $\vec{u}$. To designate a section of the relation at the $k$-th argument place we let $\vec{u}[\_]_k$ be the tuple with a hole at the $k$-th argument place. Then $wR^\sigma\vec{u}[\_]_k=\{v\midsp wR^\sigma\vec{u}[v]_k\}\subseteq Z_{i_k}$ is the $k$-th section of $R^\sigma$.
\end{defn}

We defer the definition of the category {\bf SRF}$_\tau$ of sorted residuated frames of type $\tau$ for later (see Definition \ref{tau-frames defn}), after establishing the necessary facts.

\subsection{Frame Relations and Operators}
\label{frames with relations section}
If $R^\sigma$ is a relation on a sorted residuated frame $\mathfrak{F}=(X,I,Y)$, of some sort type $\sigma=(i_{n+1};i_1\cdots i_n)$, then as in the unsorted case, $R^\sigma$ (but we shall drop the displayed sort type when clear from context)  generates a (sorted) {\em image operator} $\alpha_R$, defined by \eqref{sorted image ops}, of sort $\sigma(\alpha_R)=(i_1,\ldots,i_n;i_{n+1})$, defined by the obvious generalization of the J\'{o}nsson-Tarski image operators \cite{jt1}.
\begin{eqnarray}\label{sorted image ops}
  \alpha_R(\vec{W})&=\;\{w\in Z_{i_{n+1}}\midsp \exists \vec{w}\;(wR\vec{w}\wedge\bigwedge_{j=1}^{j=n}(w_j\in W_j))\} &=\; \bigcup_{\vec{w}\in\vec{W}}R\vec{w}
\end{eqnarray}
where for each $j$, $W_j\subseteq Z_{i_j}$ (and recall that $Z_{i_j}=X$ when $i_j=1$ and $Z_{i_j}=Y$, if $i_j=\partial$).

Thus $\alpha_R:\prod_{j=1}^{j=n}\powerset(Z_{i_j})\lra\powerset(Z_{i_{n+1}})$ is a sorted normal and completely additive function in each argument place, therefore it is residuated, i.e. for each $k$ there is a set-operator $\beta_R^k$ satisfying the condition:
\begin{equation}\label{residuation condition}
\alpha_R(\vec{W}[V]_k)\subseteq U\;\mbox{ iff }\; V\subseteq\beta_R^k(\vec{W}[U]_k)
\end{equation}
Hence $\beta_R^k(\vec{W}[U]_k)$ is the largest set $V$ s.t. $\alpha_R(\vec{W}[V]_k)\subseteq U$ and it is thereby definable by
\begin{equation}\label{def residual of alphaR}
  \beta_R^k(\vec{W}[U]_k)=\bigcup\{V\midsp \alpha_R(\vec{W}[V]_k)\subseteq U\}
\end{equation}

\begin{defn}\rm\label{overline alpha R}
$\overline{\alpha}_R$ is the closure of the restriction of $\alpha_R$ to Galois sets $\vec{F}$,
\begin{equation}\label{closure of restriction}
\overline{\alpha}_R(\vec{F})=(\alpha_R(\vec{F}))^{\prime\prime}=\left(\bigcup_{j=1,\ldots,n}^{w_j\in F_j}R\vec{w}\right)^{\prime\prime}=\bigvee_{\vec{w}\in\vec{F}}(R\vec{w})^{\prime\prime}
\end{equation}
where $F_j\in\mathcal{G}(Z_{i_j})$, for each $j\in\{1,\ldots,n\}$.
\end{defn}
In Theorem \ref{distribution from stability thm} we establish conditions under which the sorted operation $\overline{\alpha}_R$ on Galois sets is completely distributive, in each argument place.

The operator $\overline{\alpha}_R$ is  sorted  and its sorting is inherited from the sort type of $R$. For example, if $\sigma(R)=(\partial;11)$,  $\alpha_R:\powerset(X)\times\powerset(X)\lra\powerset(Y)$, hence $\overline{\alpha}_R:\gpsi\times\gpsi\lra\gphi$.
Single sorted operations
\[
\mbox{$\overline{\alpha}^1_R:\gpsi\times\gpsi\lra\gpsi$ and $\overline{\alpha}^\partial_R:\gphi\times\gphi\lra\gphi$}
\]
can be then extracted by composing appropriately with the Galois connection: $\overline{\alpha}^1_R(A,C)=(\overline{\alpha}_R(A,C))'$ (where $A,C\in\gpsi$) and, similarly, $\overline{\alpha}^\partial_R(B,D)=\overline{\alpha}_R(B',D')$ (where $B,D\in\gphi$). Similarly for the $n$-ary case and for an arbitrary distribution type.

\begin{defn}[Complex Algebra]\rm
\label{complex algebra defn}
Let $\mathfrak{F}=(X,\upv,Y,R)$ be a polarity with a relation $R$ of some sort $\sigma(R)=(i_{n+1};i_1\cdots i_n)$. The {\em complex algebra of} $\mathfrak{F}$ is the structure $\mathfrak{F}^+=(\gpsi,\overline{\alpha}^1_R)$ and its {\em dual complex algebra} is the structure $\mathfrak{F}^\partial=(\gphi,\overline{\alpha}^\partial_R)$.
\end{defn}

\noindent
Most of the time we work with the {\em dual sorted algebra of Galois sets} as it allows
\[
\left\langle(\;)\rperp:\gpsi\iso\gphi^\partial:{}\rperp(\;),\;\;\overline{\alpha}_R:\prod_{j=1}^{j=n} \mathcal{G}(Z_{i_j})\lra\mathcal{G}(Z_{i_{n+1}})\right\rangle,\hskip5mm(Z_1=X,\;Z_\partial=Y)
\]
for considering sorted operations that distribute over joins in each argument place (which are either joins of $\gpsi$, or of $\gphi$, depending on the sort type of the operation). Single-sorted normal operators are then  extracted in the complex algebra by composition with the Galois maps, as indicated above.

\begin{defn}[Conjugates]\rm
\label{conjugates defn}
Let $\alpha$ be an image operator (generated by some relation $R$) of sort type $\sigma(\alpha)=(\vec{i_j};i_{n+1})$ and $\overline{\alpha}$ the closure of its restriction to Galois sets in each argument place, as defined above.
A function $\overline{\gamma}^k$  on Galois sets, of sort type $\sigma(\overline{\gamma}^k)=(\vec{i_j}[\overline{i_{n+1}}]_k;\overline{i_k})= (i_1,\ldots,i_{k-1},\overline{i_{n+1}},i_{k+1},\ldots,i_n;\overline{i_k})$
(where $\overline{i_j}=\partial$ if $i_j=1$ and $\overline{i_j}=1$ when $i_j=\partial$) is a {\em conjugate} of $\overline{\alpha}$ at the $k$-th argument place (or a $k$-conjugate) iff the following condition holds
\begin{equation}\label{def conjugates}
\hspace*{-2mm}\overline{\alpha}(\vec{F})\subseteq G\;\mbox{ iff }
\overline{\gamma}^k(\vec{F}[G']_k)\subseteq F_k'
\end{equation}
for all Galois sets $F_j\in\mathcal{G}(Z_{i_j})$ and $G\in\mathcal{G}(Z_{i_{n+1}})$.
\end{defn}

It follows from the definition of a conjugate function that $\overline{\gamma}$ is a $k$-conjugate of $\overline{\alpha}$ iff $\overline{\alpha}$ is one of $\overline{\gamma}$ and we thus call $\overline{\alpha},\overline{\gamma}$ $k$-conjugates. Note that the priming notation for both maps of the duality $(\;)\rperp:\gpsi\iso\gphi^\partial:{}\rperp(\;)$ packs together, in one form, four distinct (due to sorting) cases of conjugacy.

\begin{ex}\rm
\label{example and strategy ex 1}
In the case of a ternary relation $R^{111}$ of the indicated sort type, an image operator $\alpha_R=\bigodot:\powerset(X)\times\powerset(X)\lra\powerset(X)$ is generated.  Designate the closure of its restriction to Galois stable sets by $\bigovert:\gpsi\times\gpsi\lra\gpsi$.
Then $\overline{\alpha}=\bigovert$ is of sort type $\sigma(\bigovert)=(1,1;1)$. If $\overline{\gamma}^2_R=\triangleright:\gpsi\times\gphi\lra\gphi$, with $\sigma(\triangleright)=(1,\partial;\partial)$,  then $\bigovert$, $\triangleright$ are {\em conjugates} iff for any Galois stable sets $A,F,C\in\gpsi$ it holds that $A\bigovert F\subseteq C$ iff $A\triangleright C'\subseteq F'$.

Note that, given an operator $\triangleright:\gpsi\times\gphi\lra\gphi$, if we now define $\Ra\;:\gpsi\times\gpsi\lra\gpsi$ by $A\Ra C=(A\triangleright C')'={}\rperp(A\triangleright C\rperp)$, it is immediate that $\bigovert, \triangleright$ are conjugates iff $\bigovert,\Ra$ are residuated. In other words
\[
A\mbox{$\bigovert$}F\subseteq C\;\mbox{ iff }\;A\triangleright C'\subseteq F'\;\mbox{ iff }\;F\subseteq A\Ra C
\]
\end{ex}

\begin{lemma}\rm\label{conjugate-residual}
The following are equivalent.
\begin{enumerate}
\item[1)] $\overline{\alpha}_R$ distributes over any joins of Galois sets at the $k$-th argument place
\item[2)] $\overline{\alpha}_R$ has a $k$-conjugate $\overline{\gamma}_R^k$ defined on Galois sets by
  \[
  \overline{\gamma}_R^k(\vec{F})=\bigcap\{G\midsp \overline{\alpha}_R(\vec{F}[G']_k)\subseteq F'_k\}
  \]
\item[3)] $\overline{\alpha}_R$ has a $k$-residual $\overline{\beta}_R^k$  defined on Galois sets by
  \[
      \overline{\beta}_R^k(\vec{F}[G]_k)= (\overline{\gamma}_R^k(\vec{F}[G']_k))'=\bigvee\{G^\prime\midsp \overline{\alpha}_R(\vec{F}[G']_k)\subseteq F'_k\}
  \]
\end{enumerate}
\end{lemma}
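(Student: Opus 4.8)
The plan is to read all three conditions as instances of standard residuation theory on the complete lattices of Galois sets, with the passage between the residual and the conjugate carried out entirely by the order-reversing duality $(\;)\rperp:\gpsi\iso\gphi^\partial:{}\rperp(\;)$. First I would fix every argument other than the $k$-th, writing $\varphi(H)=\overline{\alpha}_R(\vec{F}[H]_k)$, and observe that $\varphi$ is a \emph{monotone} map from $\mathcal{G}(Z_{i_k})$ to $\mathcal{G}(Z_{i_{n+1}})$: indeed $\alpha_R(\vec{W})=\bigcup_{\vec{w}\in\vec{W}}R\vec{w}$ is monotone in each place and $(\;)^{\prime\prime}$ is a closure operator, so $\overline{\alpha}_R=(\alpha_R(\cdot))^{\prime\prime}$ is monotone unconditionally. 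Monotonicity is the only regularity needed, and condition 1 is then exactly the assertion that $\varphi$ preserves arbitrary joins of $\mathcal{G}(Z_{i_k})$, where one must keep in mind that such joins are formed as $\bigvee_i H_i=(\bigcup_i H_i)^{\prime\prime}$, not as plain set-unions.

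For $1\Leftrightarrow 3$ I would invoke the basic theorem of residuation (equivalently, the adjoint functor theorem for posets): a monotone map between complete lattices preserves all joins iff it has a right adjoint, and when it does the adjoint is $\psi(G)=\bigvee\{H\mid\varphi(H)\subseteq G\}$. For $1\Rightarrow 3$ I would set $\overline{\beta}_R^k(\vec{F}[G]_k)=\bigvee\{H\in\mathcal{G}(Z_{i_k})\mid\overline{\alpha}_R(\vec{F}[H]_k)\subseteq G\}$ (this is the displayed formula, once the duality-prime bookkeeping is unwound) and verify the residuation biconditional $\overline{\alpha}_R(\vec{F}[H]_k)\subseteq G$ iff $H\subseteq\overline{\beta}_R^k(\vec{F}[G]_k)$; the nontrivial inclusion uses join-preservation to push $\varphi$ through the defining join. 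For $3\Rightarrow 1$ I would use the routine fact that a map with a right adjoint preserves joins, read off the chain $\overline{\alpha}_R(\vec{F}[\bigvee_iH_i]_k)\subseteq G$ iff $\bigvee_iH_i\subseteq\overline{\beta}_R^k(\vec{F}[G]_k)$ iff $\forall i\,H_i\subseteq\overline{\beta}_R^k(\vec{F}[G]_k)$ iff $\bigvee_i\overline{\alpha}_R(\vec{F}[H_i]_k)\subseteq G$, valid for every $G$.

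For $2\Leftrightarrow 3$ I would transport the residuation biconditional across the duality, as already previewed in Example~\ref{example and strategy ex 1}. Since $(\;)'$ restricts to a dual isomorphism $\mathcal{G}(Z_{i_k})\iso\mathcal{G}(Z_{\overline{i_k}})^\partial$, applying it to both sides of $V\subseteq\overline{\beta}_R^k(\vec{F}[G]_k)$ and using $V=V^{\prime\prime}$ converts the residuation condition into exactly the conjugacy condition~\eqref{def conjugates}, once one sets $\overline{\beta}_R^k(\vec{F}[G]_k)=(\overline{\gamma}_R^k(\vec{F}[G']_k))'$. Under the same dual isomorphism the meet defining $\overline{\gamma}_R^k$ in condition 2 and the join defining $\overline{\beta}_R^k$ in condition 3 are images of one another, because $(\;)'$ sends meets of $\mathcal{G}(Z_{\overline{i_k}})$ to joins of $\mathcal{G}(Z_{i_k})$; this also explains why the explicit description of $\overline{\gamma}_R^k$ is forced once $1$ (equivalently $3$) holds.

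I expect the only real friction to be bookkeeping rather than mathematics: keeping the sorts straight as primes flip $i_k\leftrightarrow\overline{i_k}$ and $i_{n+1}\leftrightarrow\overline{i_{n+1}}$ across the several slots, and being careful that ``join'' always denotes the join of the relevant lattice of Galois sets (the closure of a union) and never the bare union, so that join-preservation is a genuine hypothesis rather than an automatic property of the image operator. The conceptual content, namely the adjoint functor theorem together with an order-reversing duality, is entirely standard, and once the fixed-slot reduction is in place each implication is a short calculation.
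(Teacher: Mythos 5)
Your proposal is correct and follows essentially the same route as the paper's proof: the equivalence $1\Leftrightarrow 3$ is the standard fact that a monotone map between complete lattices preserves arbitrary joins iff it is residuated, with the residual given by the supremum formula, and $2\Leftrightarrow 3$ is obtained by transporting the residuation biconditional across the duality $(\;)\rperp:\gpsi\iso\gphi^\partial:{}\rperp(\;)$, exactly as in Example \ref{example and strategy ex 1}. The only cosmetic difference is that the paper establishes $2\Leftrightarrow 3$ under the standing assumption of 1) via the explicit intersection formula for $\overline{\gamma}_R^k$, whereas you note (correctly) that the duality transport works unconditionally; your attention to joins being closures of unions rather than bare unions matches the care the paper takes on the same point.
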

\begin{proof}
Existence of a $k$-residual is equivalent to distribution over arbitrary joins and the residual is defined by
\[
\overline{\beta}^k_R(\ldots,F_{k-1},H,F_{k+1},\ldots)=\bigvee\{G\midsp\overline{\alpha}_R(\ldots,F_{k-1},G,F_{k+1},\ldots)\subseteq H\}
\]
We show that the distributivity assumption 1) implies that 2) and 3) are equivalent, i.e. that
\[
\overline{\alpha}_R(\vec{F}[G]_k)\subseteq H\;\mbox{ iff }\;
\overline{\gamma}^k_R(\vec{F}[H']_k)\subseteq G'\;\mbox{ iff }\;G\subseteq\overline{\beta}^k_R(\vec{F}[H]_k)
\]

We illustrate the proof for the unary case only, as the other parameters remain idle in the argument.

Assume $\overline{\alpha}_R(G)\subseteq H$ and let $\overline{\gamma}_R(H^\prime)=\bigcap\{E\midsp\overline{\alpha}_R(E')\subseteq H\}$, a Galois set by definition, given that $G,H,E$ are assumed to be Galois sets. Then $G^\prime$ is in the set whose intersection is taken. Hence $\overline{\gamma}_R(H^\prime)\subseteq G^\prime$ follows from the definition of $\overline{\gamma}_R$. It also follows by  definition that $G\subseteq\overline{\beta}_R(H)=(\overline{\gamma}_R(H^\prime))^\prime$.

Assuming $G\subseteq\overline{\beta}_R(H)$ we obtain by definition that $G\subseteq (\overline{\gamma}_R(H^\prime))^\prime$, hence  $G\subseteq\bigvee\{E^\prime\midsp\overline{\alpha}_R(E^\prime)\subseteq H\}$, using the definition of $\overline{\gamma}_R$ and duality. Hence by the distributivity assumption $\overline{\alpha}_R(G)\subseteq\bigvee\{\overline{\alpha}_R(E^\prime)\midsp \overline{\alpha}_R(E^\prime)\subseteq H\}\subseteq H$. This establishes that
$\overline{\alpha}_R(G)\subseteq H$ iff $\overline{\gamma}_R(H^\prime)\subseteq G^\prime$ iff $G\subseteq\overline{\beta}_R(H)$, qed.
\end{proof}

\begin{thm}\rm
\label{distribution from stability thm}
Let $\mathfrak{F}=(X,\upv,Y,R)$ be a frame with an $(n+1)$-ary sorted relation, of some sort  $\sigma(R)=(i_{n+1};\vec{i_j})$ and assume that for any $w\in Z_{\overline{i_{n+1}}}$ and any $(n-1)$-tuple $\vec{p}[\_]_k$ with $p_j\in Z_{i_j}$, for each $j\in\{1,\ldots,n\}\setminus\{k\}$, the sections $wR'\vec{p}[\_]_k$ of the Galois dual relation $R'$ of $R$ are Galois sets. Then $\overline{\alpha}_R$ distributes at the $k$-th argument place over arbitrary joins in $\mathcal{G}(Z_{i_k})$.
\end{thm}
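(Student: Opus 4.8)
The plan is to verify distribution directly and to reduce it, via the Galois connection, to a single containment that is controlled by the hypothesis. Distribution at the $k$-th place means $\overline{\alpha}_R(\vec{F}[\bigvee_i H_i]_k)=\bigvee_i\overline{\alpha}_R(\vec{F}[H_i]_k)$ for Galois sets $F_j$ ($j\neq k$) and a family $\{H_i\}\subseteq\mathcal{G}(Z_{i_k})$, the joins being taken in $\mathcal{G}(Z_{i_k})$ and $\mathcal{G}(Z_{i_{n+1}})$ respectively. The inclusion $\supseteq$ is immediate from monotonicity of $\overline{\alpha}_R$ (a closure of a monotone image operator), so only $\subseteq$ needs work. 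Writing $W=\bigcup_i H_i$ and using complete additivity of the underlying image operator $\alpha_R$ together with the fact that the join in $\mathcal{G}$ is the double-prime of the union, one checks that $\bigvee_i\overline{\alpha}_R(\vec{F}[H_i]_k)=(\alpha_R(\vec{F}[W]_k))''$, while the left side is $(\alpha_R(\vec{F}[W'']_k))''$. By monotonicity and idempotence of $(\;)''$ the whole statement then collapses to proving, for Galois sets $F_j$ ($j\neq k$) and an arbitrary $W\subseteq Z_{i_k}$, the key inclusion
\[
\alpha_R(\vec{F}[W'']_k)\subseteq(\alpha_R(\vec{F}[W]_k))''.
\]

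Next I would transpose this inclusion through the Galois connection. Since $A\subseteq B''$ iff $B'\subseteq A'$, the key inclusion is equivalent to $(\alpha_R(\vec{F}[W]_k))'\subseteq(\alpha_R(\vec{F}[W'']_k))'$. Because $\alpha_R(\vec{V})=\bigcup_{\vec{v}\in\vec{V}}R\vec{v}$ and priming turns unions into intersections, and since $R'\vec{v}=(R\vec{v})'$ by Definition \ref{Galois dual relations}, each side rewrites as an intersection of sections of the Galois dual relation, namely $(\alpha_R(\vec{V}))'=\bigcap_{\vec{v}\in\vec{V}}R'\vec{v}$. Thus the goal becomes $\bigcap_{\vec{v}\in\vec{F}[W]_k}R'\vec{v}\subseteq\bigcap_{\vec{v}\in\vec{F}[W'']_k}R'\vec{v}$, i.e. enlarging the $k$-th index set from $W$ to $W''$ must not shrink the intersection.

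The crux is to show this enlargement is harmless, and here the section-stability hypothesis enters. Fix $u$ in the left-hand intersection and an arbitrary tuple $\vec{v}'\in\vec{F}[W'']_k$. Holding the off-$k$ coordinates at $v_j'\in F_j$, membership of $u$ on the left gives $uR'\vec{v}'[w]_k$ for every $w\in W$; that is, $W$ is contained in the $k$-th section $uR'\vec{v}'[\_]_k$. Here $u\in Z_{\overline{i_{n+1}}}$ plays the role of the point $w$ in the statement of the theorem and $\vec{v}'[\_]_k$ that of $\vec{p}[\_]_k$, so by hypothesis this section is a Galois set. Since Galois sets are fixed by double-priming and $(\;)''$ is monotone, $W\subseteq uR'\vec{v}'[\_]_k$ yields $W''\subseteq uR'\vec{v}'[\_]_k$; in particular $v_k'\in W''$ lies in the section, i.e. $uR'\vec{v}'$ holds. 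As $\vec{v}'$ was arbitrary, $u$ belongs to the right-hand intersection, which closes the argument.

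The main obstacle is conceptual rather than computational: recognizing that the entire distribution claim funnels into the single implication ``$W\subseteq(\text{section})\Rightarrow W''\subseteq(\text{section})$'', which is exactly the assertion that the relevant sections of $R'$ are stable. Once the problem is transposed to the dual relation $R'$ through the Galois connection, the hypothesis is precisely what is required, and everything else (complete additivity of $\alpha_R$, monotonicity and idempotence of $(\;)''$, and the bookkeeping of joins in $\mathcal{G}$) is routine. The one point demanding care is that the sorts line up, so that the output sort $\overline{i_{n+1}}$ of $R'$ is the sort of $u$ and the residual coordinates $v_j'$ lie in $Z_{i_j}$; then the section-stability assumption applies verbatim.
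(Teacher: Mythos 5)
Your proof is correct, but it takes a genuinely different route from the paper's. The paper does not verify the distribution identity directly: it constructs an auxiliary relation $S$ (via an intermediate relation $T$ defined by $vT\vec{p}[w]_k$ iff $w\in(vR'\vec{p}[\_]_k)'$), proves through a chain of first-order equivalences that $\overline{\alpha}_R$ and the closure $\overline{\eta}_S$ of the image operator of $S$ are $k$-conjugates, and then invokes Lemma \ref{conjugate-residual}, which states that possession of a $k$-conjugate is equivalent to complete distribution at the $k$-th place. You instead verify join-preservation head-on: complete additivity of $\alpha_R$ plus standard properties of the closure $(\;)''$ reduce everything to the single inclusion $\alpha_R(\vec{F}[W'']_k)\subseteq(\alpha_R(\vec{F}[W]_k))''$, which you transpose through the Galois connection into a containment of intersections of sections of $R'$, at which point section stability does exactly the needed work ($W\subseteq uR'\vec{v}'[\_]_k$ implies $W''\subseteq uR'\vec{v}'[\_]_k$). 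The hypothesis is used at essentially the same spot in both arguments --- the paper's corresponding step is the passage from $F_k\subseteq vR'\vec{p}[\_]_k$ to $(vR'\vec{p}[\_]_k)'\subseteq F_k'$ --- but your argument is shorter, more elementary, and needs neither the construction of $S$ nor Lemma \ref{conjugate-residual}. What the paper's detour buys is the explicit exhibition of the conjugate (and hence the residual) as the image operator of a concrete frame relation, which serves the paper's broader programme of realizing all operations relationally and is reused downstream (e.g.\ in Theorem \ref{beta1R residual}); your proof establishes the distribution claim but does not produce those operators. Your sort bookkeeping (the point $u$ living in $Z_{\overline{i_{n+1}}}$ and the off-$k$ coordinates in $Z_{i_j}$) is handled correctly, so the stability hypothesis applies as stated.
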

\begin{proof}
  Define the relation $T$ from $R$ by setting,
  \[
  vT\vec{p}[w]_k\;\mbox{ iff }\; w\in (vR'\vec{p}[\_]_k)'
  \]
  Then use equation \eqref{k-conjugate relations constraint} below, to define a relation $S$
   \begin{equation}\label{k-conjugate relations constraint}
   \forall v\in Z_{\overline{i_{n+1}}}\forall \vec{p}[\_]_k\in\vec{Z_{i_j}}[\_]_k\forall w\in Z_{\overline{i_k}}\left( vT\vec{p}[w]_k\;\leftrightarrow\;wS\vec{p}[v]_k \right)
 \end{equation}
 Note that the sort type of $S$, as defined, is $\sigma(S)=(\overline{i_k};\vec{i_j}[\overline{i_{n+1}}]_k)$ . Let $\overline{\eta}_S$ be the closure of the restriction of the image operator $\eta_S$ to Galois sets, according to the sort type of $S$. We show that $\overline{\alpha}_R$ and $\overline{\eta}_S$ are $k$-conjugates. To establish the conjugacy condition $\overline{\alpha}_R(\vec{F})\subseteq G$ iff $\overline{\eta}_S(\vec{F}[G']_k)\subseteq F'_k$ it suffices by Lemma \ref{basic facts} to verify that $\alpha_R(\vec{F})\subseteq G$ iff $\eta_S(\vec{F}[G']_k)\subseteq F'_k$.
 We have
\begin{tabbing}
$\alpha_R(\vec{F})\subseteq G$\hskip5mm\= iff\hskip5mm\= $\bigcup_{\vec{p}\in\vec{F}}R\vec{p}\;\subseteq G$ \hskip10mm\= iff\hskip2mm\=$\forall\vec{p}\;(\vec{p}\in\vec{F}\;\lra\; (R\vec{p}\subseteq G))$\\
\hskip5mm\=iff\hskip2mm\= $\forall\vec{p}\;(\vec{p}\in\vec{F}\;\lra\;(G'\subseteq R'\vec{p}))$\\
 \>iff\> $\forall\vec{p}\;(\vec{p}\in\vec{F}\;\lra\;\forall v\in Z_{\overline{i_{n+1}}}(G|v\lra vR'\vec{p}))$\\
\>iff\> $\forall\vec{p}\forall v\in Z_{\overline{i_{n+1}}}\;(\vec{p}[\_]_k\in\vec{F}[\_]_k\wedge p_k\in F_k\wedge G|v\;\lra\; vR'\vec{p}[p_k]_k)$\\
\>iff\>  $\forall\vec{p}\forall v\in Z_{\overline{i_{n+1}}}\;(\vec{p}[\_]_k\in\vec{F}[\_]_k \wedge G|v\;\lra\; (p_k\in F_k\lra vR'\vec{p}[p_k]_k))$\\
\>iff\> $\forall\vec{p}[\_]_k\forall v\in Z_{\overline{i_{n+1}}}\;(\vec{p}[\_]_k\in\vec{F}[\_]_k \wedge G|v\;\lra\; (F_k\subseteq vR'\vec{p}[\_]_k))$\\
\>\> (using the hypothesis that the $k$-th sections of $R'$ are Galois sets)\\
\>iff\> $\forall\vec{p}[\_]_k\forall v\in Z_{\overline{i_{n+1}}}\;(\vec{p}[\_]_k\in\vec{F}[\_]_k \wedge G|v\;\lra\; (\;(vR'\vec{p}[\_]_k)'\subseteq F^\prime_k))$\\
\>\> (using the definition of $T$)\\
\>iff\> $\forall\vec{p}[\_]_k\forall v\in Z_{\overline{i_{n+1}}}\;\left(\vec{p}[\_]_k\in\vec{F}[\_]_k \wedge G|v\;\lra\; \forall w\in Z_{\overline{i_k}} (\;vT\vec{p}[w]_k\lra F_k|w)\right)$\\
\>iff\> $\forall\vec{p}[\_]_k\forall v\in Z_{\overline{i_{n+1}}}\forall w\in Z_{\overline{i_k}}\; \left(vT\vec{p}[w]_k \wedge \vec{p}[\_]_k\in\vec{F}[\_]_k \wedge G|v\;\lra\; F_k|w\right)$\\
\>\> (using the definition of $S$)\\
\>iff\> $\forall\vec{p}[\_]_k\forall v\in Z_{\overline{i_{n+1}}}\forall w\in Z_{\overline{i_k}}\; \left(wS\vec{p}[v]_k \wedge \vec{p}[\_]_k\in\vec{F}[\_]_k \wedge G|v\;\lra\; F_k|w\right)$\\
\>iff\> $\bigcup_{\vec{p}[v]_k\in\vec{F}[G']_k} S\vec{p}[v]_k\;\subseteq F^\prime_k$\\
\>iff\> $\eta_S(\vec{F}[G']_k)\subseteq F^\prime_k$
\end{tabbing}
Hence $\overline{\alpha}_R$ and $\overline{\eta}_S$ are $k$-conjugates.
 Consequently, by Lemma \ref{conjugate-residual}, $\overline{\alpha}_R$ distributes at the $k$-th argument place over arbitrary joins in $\mathcal{G}(Z_{i_k})$.
\end{proof}

\begin{defn}\rm
\label{betakR defn}
Let $\beta^k_{R/}$ be the restriction of $\beta^k_R$ of equation \eqref{def residual of alphaR} to Galois sets, according to its sort type, explicitly defined by \eqref{betakR}:

\begin{equation}\label{betakR}
  \beta^k_{R/}(\vec{E}[G]_k)=\bigcup\{F\in\mathcal{G}(Z_{i_k})\midsp\alpha_R(\vec{E}[F]_k)\subseteq G\}
\end{equation}
\end{defn}

\begin{thm}
\label{beta1R residual}
  \rm
  If $\overline{\alpha}_R$ is residuated in the $k$-th argument place, then $\beta^k_{R/}$ is its residual and $ \beta^k_{R/}(\vec{E}[G]_k)$ is a Galois set, i.e. the union in equation \eqref{betakR} is actually a join in $\mathcal{G}(Z_{i_k})$.
\end{thm}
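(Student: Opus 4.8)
The plan is to show that the plain set-theoretic union defining $\beta^k_{R/}(\vec{E}[G]_k)$ is already a Galois set, so that it coincides with the join-residual $\overline{\beta}^k_R(\vec{E}[G]_k)$ furnished by Lemma \ref{conjugate-residual}, whence both assertions follow at once. Throughout, $\vec{E}$ and $G$ are Galois sets and I abbreviate $\mathcal{K}=\{F\in\mathcal{G}(Z_{i_k})\midsp\alpha_R(\vec{E}[F]_k)\subseteq G\}$, so that $\beta^k_{R/}(\vec{E}[G]_k)=\bigcup\mathcal{K}$ by \eqref{betakR}. The first step is the elementary observation that, for a Galois set $G$ and Galois-set arguments, $\alpha_R(\vec{E}[F]_k)\subseteq G$ iff $\overline{\alpha}_R(\vec{E}[F]_k)\subseteq G$: this is immediate from $\overline{\alpha}_R(\vec{E}[F]_k)=(\alpha_R(\vec{E}[F]_k))^{\prime\prime}$ together with Lemma \ref{basic facts}(6), which gives $W^{\prime\prime}\subseteq G$ iff $W\subseteq G$. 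Hence $\mathcal{K}=\{F\in\mathcal{G}(Z_{i_k})\midsp\overline{\alpha}_R(\vec{E}[F]_k)\subseteq G\}$, and its join in the complete lattice $\mathcal{G}(Z_{i_k})$ is the Galois closure of its union, $\bigvee\mathcal{K}=(\bigcup\mathcal{K})^{\prime\prime}$.

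The heart of the argument is to verify that this closure $(\bigcup\mathcal{K})^{\prime\prime}$ is itself a member of $\mathcal{K}$. Since $\overline{\alpha}_R$ is residuated at the $k$-th place, by Lemma \ref{conjugate-residual} it distributes over arbitrary joins there, so $\overline{\alpha}_R(\vec{E}[\bigvee\mathcal{K}]_k)=\bigvee_{F\in\mathcal{K}}\overline{\alpha}_R(\vec{E}[F]_k)$. Each term of this join lies in $G$ by the observation above, and $G$ is an element of the lattice $\mathcal{G}(Z_{i_{n+1}})$ in which the join is computed; as the join is the least upper bound there, $\overline{\alpha}_R(\vec{E}[\bigvee\mathcal{K}]_k)\subseteq G$, and therefore $\alpha_R(\vec{E}[\bigvee\mathcal{K}]_k)\subseteq G$ as well. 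Because $\bigvee\mathcal{K}$ is a join of Galois sets it is Galois, so indeed $\bigvee\mathcal{K}\in\mathcal{K}$.

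From $\bigvee\mathcal{K}\in\mathcal{K}$ I obtain $(\bigcup\mathcal{K})^{\prime\prime}=\bigvee\mathcal{K}\subseteq\bigcup\mathcal{K}$, while extensivity of the closure gives the reverse inclusion $\bigcup\mathcal{K}\subseteq(\bigcup\mathcal{K})^{\prime\prime}$. Hence $\bigcup\mathcal{K}=(\bigcup\mathcal{K})^{\prime\prime}$ is Galois, which is precisely the assertion that the union in \eqref{betakR} is a join in $\mathcal{G}(Z_{i_k})$, and moreover $\beta^k_{R/}(\vec{E}[G]_k)=\bigvee\mathcal{K}=\overline{\beta}^k_R(\vec{E}[G]_k)$. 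Since Lemma \ref{conjugate-residual} identifies $\overline{\beta}^k_R$ as the $k$-residual of $\overline{\alpha}_R$, so is $\beta^k_{R/}$. Alternatively the residuation equivalence $\overline{\alpha}_R(\vec{E}[F]_k)\subseteq G$ iff $F\subseteq\beta^k_{R/}(\vec{E}[G]_k)$ can be checked directly for Galois $F$: the forward direction simply places $F$ in $\mathcal{K}$, and the backward direction uses monotonicity of $\alpha_R$ together with $\bigvee\mathcal{K}\in\mathcal{K}$ and Lemma \ref{basic facts}(6).

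I expect the only genuinely delicate point to be the one isolated in the second paragraph. The union defining $\beta^k_{R/}$ is formed with the unrestricted powerset operator $\alpha_R$ and is a priori merely a set, so one must exploit the distributivity (equivalently, residuation) of the closed operator $\overline{\alpha}_R$ to force the closure $(\bigcup\mathcal{K})^{\prime\prime}$ back into the defining family $\mathcal{K}$, thereby collapsing the union with its own Galois closure. Everything else is bookkeeping with the closure operator and the interchange between $\alpha_R$ and $\overline{\alpha}_R$ afforded by Lemma \ref{basic facts}(6).
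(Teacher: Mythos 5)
Your proof is correct and follows essentially the same route as the paper: both arguments come down to showing that the Galois closure $(\bigcup\mathcal{K})''=\bigvee\mathcal{K}=\overline{\beta}^k_R(\vec{E}[G]_k)$ still satisfies the defining condition of $\mathcal{K}$ and is therefore contained in $\bigcup\mathcal{K}$, which together with extensivity of closure forces $\bigcup\mathcal{K}$ to be Galois and equal to the residual. The only cosmetic difference is that you obtain $\overline{\alpha}_R(\vec{E}[\bigvee\mathcal{K}]_k)\subseteq G$ from complete distributivity over the join indexed by $\mathcal{K}$, whereas the paper gets the same inclusion from the residuation counit together with the powerset-level residual $\beta^k_R$ (the two being interchangeable by Lemma \ref{conjugate-residual}).
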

\begin{proof}
  We illustrate the proof for the unary case only, since the other parameters that may exist remain idle in the argument. In the unary case, $\beta_{R/}(G)=\bigcup\{F\midsp\alpha_R(F)\subseteq G\}$, for Galois sets $F,G$.

  Note first that $\overline{\alpha}_R(F)\subseteq G$ iff $F\subseteq\beta_{R/}(G)$. Left-to-right is obvious by definition and by the fact that for a Galois set $G$ and any set $U$, $U^{\prime\prime}\subseteq G$ iff $U\subseteq G$.  If $F\subseteq\beta_{R/}(G)\subseteq\beta_R(G)$, then by residuation $\alpha_R(F)\subseteq G$. Given that $G$ is a Galois set, it follows $\overline{\alpha}_R(F)\subseteq G$.

 If indeed $\overline{\alpha}_R$ is residuated on Galois sets with a map $\overline{\beta}_R$, then the residual is defined by $\overline{\beta}_R(G)=\bigvee\{F\midsp \overline{\alpha}_R(F)\subseteq G\}=\bigvee\{F\midsp \alpha_R(F)\subseteq G\}$ and this is precisely the closure of $\beta_{R/}(G)=\bigcup\{F\midsp\alpha_R(F)\subseteq G\}$. But in that case we obtain $F\subseteq\overline{\beta}_R(G)$ iff $\overline{\alpha}_R(F)\subseteq G$ iff $\alpha_R(F)\subseteq G$ iff $F\subseteq\beta_{R/}(G)$ and setting $F=\overline{\beta}_R(G)$ it follows that $\overline{\beta}_R(G)\subseteq\beta_{R/}(G)\subseteq\overline{\beta}_R(G)$.
\end{proof}

\begin{lemma}\rm
\label{equiv defn of betakr}
$\beta^k_{R/}$ is equivalently defined by \eqref{beta equiv 1} and by \eqref{beta equiv 2}
\begin{eqnarray}
\beta^k_{R/}(\vec{E}[G]_k)&=&\bigcup\{\Gamma u\in\mathcal{G}(Z_{i_k})\midsp\alpha_R(\vec{E}[\Gamma u]_k)\subseteq G\}
\label{beta equiv 1}\\
\beta^k_{R/}(\vec{E}[G]_k)&=&\{ u\in Z_{i_k}\midsp\alpha_R(\vec{E}[\Gamma u]_k)\subseteq G\}
\label{beta equiv 2}
\end{eqnarray}
\end{lemma}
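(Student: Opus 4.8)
The plan is to prove the two equalities \eqref{beta equiv 1} and \eqref{beta equiv 2} against the defining equation \eqref{betakR} in turn, treating only the argument place $k$ and leaving the idle parameters $\vec{E}[\_]_k$ fixed throughout. The two facts that do all the work are the complete additivity of the image operator $\alpha_R$ in each argument place, which is immediate from its definition \eqref{sorted image ops} as $\alpha_R(\vec{W})=\bigcup_{\vec{w}\in\vec{W}}R\vec{w}$, and the structural facts about Galois sets collected in Lemma \ref{basic facts}: that each principal set $\Gamma u$ is itself a Galois set (item 2), that every Galois set $F$ decomposes as $F=\bigcup_{u\in F}\Gamma u$ (item 4), and that Galois sets are upward closed (item 3).

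First I would compare \eqref{betakR} with \eqref{beta equiv 1}. The inclusion of \eqref{beta equiv 1} into \eqref{betakR} is trivial, since by Lemma \ref{basic facts}(2) every $\Gamma u$ is already a Galois set, so the union in \eqref{beta equiv 1} ranges over a subfamily of the sets united in \eqref{betakR}. For the reverse inclusion, take any Galois set $F$ with $\alpha_R(\vec{E}[F]_k)\subseteq G$. Since $\alpha_R$ is monotone (being completely additive), for each $u\in F$ we have $\Gamma u\subseteq F$ by Lemma \ref{basic facts}(3), whence $\alpha_R(\vec{E}[\Gamma u]_k)\subseteq\alpha_R(\vec{E}[F]_k)\subseteq G$; thus each such $\Gamma u$ qualifies for the union in \eqref{beta equiv 1}. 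Using $F=\bigcup_{u\in F}\Gamma u$ from Lemma \ref{basic facts}(4) then gives $F\subseteq\bigcup\{\Gamma u\mid\alpha_R(\vec{E}[\Gamma u]_k)\subseteq G\}$, and taking the union over all qualifying $F$ shows that the set defined by \eqref{betakR} is contained in that of \eqref{beta equiv 1}.

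Finally I would show that the right-hand side $S$ of \eqref{beta equiv 2} coincides with the union in \eqref{beta equiv 1}. Because $\preceq$ is reflexive we have $u\in\Gamma u$, so $S\subseteq\bigcup_{u\in S}\Gamma u$, which is exactly the set in \eqref{beta equiv 1}. For the opposite inclusion it suffices to observe that $S$ is an upset: if $u\in S$ and $u\preceq z$, then from $\Gamma u=\{u\}^{\prime\prime}$ (Lemma \ref{basic facts}(2)) and the antitonicity of the Galois map one gets $\Gamma z\subseteq\Gamma u$, and monotonicity of $\alpha_R$ gives $\alpha_R(\vec{E}[\Gamma z]_k)\subseteq\alpha_R(\vec{E}[\Gamma u]_k)\subseteq G$, so $z\in S$. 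Hence $\bigcup_{u\in S}\Gamma u=\Gamma S=S$, completing the chain of equalities. The only point demanding care is the contravariant translation between the order $u\preceq z$ and the reverse inclusion $\Gamma z\subseteq\Gamma u$ of principal Galois sets, together with the correct well-sorting of the dropped parameters; once these are pinned down the argument is purely a matter of monotonicity and additivity.
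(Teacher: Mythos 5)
Your proof is correct and follows essentially the same route as the paper's: both identities are established via the monotonicity of $\alpha_R$, the facts that each $\Gamma u$ is a Galois set contained in any Galois set containing $u$, and the observation that $\{u\midsp\alpha_R(\vec{E}[\Gamma u]_k)\subseteq G\}$ is an upset because $u\preceq w$ forces $\Gamma w\subseteq\Gamma u$. The only cosmetic difference is that you phrase the first inclusion setwise (decomposing a qualifying $F$ as $\bigcup_{u\in F}\Gamma u$) where the paper argues pointwise, which changes nothing of substance.
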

\begin{proof}
  $\beta^k_{R/}$ is defined by equation \eqref{betakR}, so if $u\in \beta^k_{R/}(\vec{E}[G]_k)$, let $F\in \mathcal{G}(Z_{i_k})$ be such that $u\in F$ and $\alpha_R(\vec{E}[F]_k)\subseteq G$. Then $\Gamma u\subseteq F$ and by monotonicity of $\alpha_R$ we have
  $
  \alpha_R(\vec{E}[\Gamma u]_k)\subseteq\alpha_R(\vec{E}[F]_k)\subseteq G
   $
   and this establishes the left-to-right inclusion for the first identity of the lemma. The converse inclusion is obvious since $\Gamma u$ is a Galois set.

  For the second identity, the inclusion right-to-left is obvious. Now if $u$ is such that $\alpha_R(\vec{E}[\Gamma u]_k)\subseteq G$ and $u\preceq w$, then $\Gamma w\subseteq\Gamma u$ and then by monotonicity of $\alpha_R$ it follows that $\alpha_R(\vec{E}[\Gamma w]_k)\subseteq \alpha_R(\vec{E}[\Gamma u]_k)\subseteq G$.

  This shows that  $\bigcup\{\Gamma u\in\mathcal{G}(Z_{i_k})\midsp\alpha_R(\vec{E}[\Gamma u]_k)\subseteq G\}$ is contained in the set $\{ u\in Z_{i_k}\midsp\alpha_R(\vec{E}[\Gamma u]_k)\subseteq G\}$, and given the first part of the lemma, the second identity obtains as well.
\end{proof}

We summarize our so far results with the following observations.

Let $\mathbb{C}_\tau$ be the class of sorted residuated frames (equivalently, polarities) with relations $R_\sigma$ of sort type $\sigma$, for each $\sigma=(i_{n+1};\vec{i_j})\in\{1,\partial\}^{n+1}$ in the similarity type $\tau$. Assume the stability axiom below for $\mathbb{C}_\tau$.
\begin{itemize}
\item For each relation $R$ of type $\sigma=(i_{n+1};\vec{i_j})$ and each $w\in Z_{\overline{i_{n+1}}}$ and $\vec{u}[\_]_k$ with $u_j\in Z_{i_j}$ for each $j\in\{1,\ldots,n\}\setminus\{k\}$, the  $k$-th section $wR^\prime\vec{u}[\_]_k$ of the Galois dual relation $R'$ of $R$ is a Galois set, for each $k=1,\ldots,n$.
\end{itemize}
Let $\alpha_R$ be the classical sorted image operator generated by $R$, as in equation \eqref{sorted image ops}, and $\beta^k_R$ its $k$-residual for any $k=1,\ldots,n$, defined as usual by equation \eqref{def residual of alphaR}. Then
\begin{enumerate}
\item the closure $\overline{\alpha}_R$ (Definition \ref{overline alpha R}) of the restriction of $\alpha_R$ to Galois sets is residuated at the $k$-th argument place with the restriction $\beta^k_{R/}$ (Definition \ref{betakR defn}) of $\beta^k_R$ to Galois sets (Lemma \ref{conjugate-residual}, Theorem \ref{distribution from stability thm}, Theorem \ref{beta1R residual})
\item a completely normal operator $\overline{\alpha}^1_R:\gpsi^n\lra\gpsi$ of distribution type $\delta=(\vec{i_j};i_{n+1})$ is obtained by composition with the Galois connection
    \[
    \overline{\alpha}^1_R(A_1,\ldots,A_n)=\left\{
    \begin{array}{cl}
    \overline{\alpha}_R(\ldots,\underbrace{A_j}_{i_j=1},\ldots,\underbrace{A'_r}_{i_r=\partial},\ldots) &\mbox{ if }i_{n+1}=1\\
    \left(\overline{\alpha}_R(\ldots,\underbrace{A_j}_{i_j=1},\ldots,\underbrace{A'_r}_{i_r=\partial},\ldots)\right)^\prime &\mbox{ if }i_{n+1}=\partial
    \end{array}
    \right.
    \]
\item similarly for its dual operator $\overline{\alpha}^\partial_R:\gphi^n\lra\gphi$.
\end{enumerate}

We list in Table \ref{frame axioms table} the frame axioms we shall assume in the sequel, for a sorted residuated frame with relations $\mathfrak{F}=(X,I, Y, (R_\sigma)_{\sigma\in\tau})$.
\begin{table}[t]
\label{frame axioms table}
\caption{Axioms for Sorted Residuated Frames of similarity type $\tau$}
\begin{enumerate}
  \item[FAx1)] The frame is separated
  \item[FAx2)] For each $\sigma=(\vec{i_j};i_{n+1})$ in the similarity type $\tau$, each $\vec{u}\in\prod_{j=1}^{j=n}Z_{i_j}$, $R_\sigma\vec{u}$ is a closed element of $\mathcal{G}(Z_{i_{n+1}})$
  \item[FAx3)] For each $\sigma=(\vec{i_j};i_{n+1})$ in the similarity type $\tau$, each $w\in Z_{i_{n+1}}$, the $n$-ary relation $wR_\sigma$ is decreasing in every argument place
  \item[FAx4)] All sections of the Galois dual relations $R'_\sigma$ of $R_\sigma$, for each $\sigma$ in $\tau$, are Galois sets
\end{enumerate}
\end{table}

Note that axioms 1 and 2 imply that there is a (sorted) function $\widehat{f}_R$ on the points of the frame such that $\widehat{f}_R(\vec{u})=w$ iff $R\vec{u}=\Gamma w$. The following immediate observation will be useful in the sequel.

\begin{lemma}\rm\label{alpha bar on closed}
Let $\mathfrak{F}$ be a frame of similarity type $\tau$ and assume that axioms 1-3 in Table \ref{frame axioms table} hold. Then for a frame relation $R$ of type $\sigma$ in $\tau$,  $\overline{\alpha}_R(\Gamma\vec{u})=R\vec{u}=\alpha_R(\Gamma\vec{u})=\Gamma(\widehat{f}_R(\vec{u}))$.
\end{lemma}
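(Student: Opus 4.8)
The plan is to prove the chain of equalities from the inside out, the crux being the middle identity $\alpha_R(\Gamma\vec{u})=R\vec{u}$; once this is in hand the remaining two equalities are essentially bookkeeping. The rightmost equality $R\vec{u}=\Gamma(\widehat{f}_R(\vec{u}))$ is nothing but the definition of the point function $\widehat{f}_R$, which exists and is well defined precisely because FAx1 (separation) and FAx2 ($R\vec{u}$ is a closed element $\Gamma w$) hold, as noted just before the statement. So it remains to identify $\alpha_R(\Gamma\vec{u})$ with $R\vec{u}$ and then to observe that closing under the Galois operation changes nothing.

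For the identity $\alpha_R(\Gamma\vec{u})=R\vec{u}$ I would argue by double inclusion, unfolding $\alpha_R(\Gamma\vec{u})=\bigcup_{\vec{w}\in\Gamma\vec{u}}R\vec{w}$ from \eqref{sorted image ops}, where $\vec{w}\in\Gamma\vec{u}$ means exactly $\vec{u}\preceq\vec{w}$. The inclusion $R\vec{u}\subseteq\alpha_R(\Gamma\vec{u})$ is immediate: since $\preceq$ is reflexive we have $\vec{u}\in\Gamma\vec{u}$, so $R\vec{u}$ is one of the sets in the union. The reverse inclusion is where the real content sits: I must show $R\vec{w}\subseteq R\vec{u}$ whenever $\vec{u}\preceq\vec{w}$. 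Fix $v\in R\vec{w}$, i.e.\ $vR\vec{w}$; by FAx3 the $n$-ary relation $vR$ is decreasing in every argument place, so from $vR\vec{w}$ and $u_j\preceq w_j$ I may replace $w_j$ by the smaller point $u_j$ while preserving the relation, one coordinate at a time, until I reach $vR\vec{u}$, i.e.\ $v\in R\vec{u}$. This yields $\alpha_R(\Gamma\vec{u})=R\vec{u}$.

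Finally, for $\overline{\alpha}_R(\Gamma\vec{u})=R\vec{u}$: each $\Gamma u_j=\{u_j\}^{\prime\prime}$ is a Galois set by Lemma \ref{basic facts}(2), so $\overline{\alpha}_R(\Gamma\vec{u})$ is defined and equals $(\alpha_R(\Gamma\vec{u}))^{\prime\prime}$ by Definition \ref{overline alpha R}; substituting the identity just proved gives $(R\vec{u})^{\prime\prime}$, and since $R\vec{u}$ is a closed element by FAx2 it is already a Galois set, whence $(R\vec{u})^{\prime\prime}=R\vec{u}$. Assembling the three pieces gives the full chain. The only genuine obstacle is the reverse inclusion in the middle step, which is purely the monotonicity bookkeeping around FAx3; the subtlety to watch is the direction of the ordering --- that the decreasing property of sections lets one pass from $vR\vec{w}$ down to $vR\vec{u}$ when $\vec{u}\preceq\vec{w}$, rather than the other way around --- together with the fact that the coordinatewise replacement is legitimate because decreasingness holds independently in each argument place.
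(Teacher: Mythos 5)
Your proof is correct and follows the same route as the paper's: unfold $\alpha_R(\Gamma\vec{u})$ as $\bigcup_{\vec{u}\preceq\vec{w}}R\vec{w}$, collapse the union to $R\vec{u}$ via FAx3, and use FAx1--FAx2 to identify this with the closed element $\Gamma(\widehat{f}_R(\vec{u}))$, so that the Galois closure is idle. The only difference is that you spell out the coordinatewise monotonicity argument that the paper leaves implicit.
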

\begin{proof}
  By definition \eqref{sorted image ops}, $\alpha_R(\Gamma\vec{u})=\bigcup_{\vec{u}\leq\vec{w}}R\vec{w}$. By axiom 3, $\bigcup_{\vec{u}\leq\vec{w}}R\vec{w}=R\vec{u}$, which is a closed element by axiom 2, generated by a unique point $w=\widehat{f}_R(\vec{u})$, by axiom 1, so that $\alpha_R(\Gamma\vec{u})=R\vec{u}=\Gamma w=(\alpha_R(\Gamma\vec{u}))^{\prime\prime}=\overline{\alpha}_R(\Gamma\vec{u})$, where $\Gamma w=\Gamma(\widehat{f}_R(\vec{u}))$.
\end{proof}
The axiomatization will be strengthened in Section \ref{stone section}, in order to be able to carry out a Stone duality proof.

\subsection{Weak Bounded Morphisms}
\label{bounded section}
Recall that  a {\em bounded morphism} $p:(W_1,R_1)\lra(W_2,R_2)$, for classical Kripke frames, is defined as a map that preserves the frame relation, i.e. $uR_1v$ implies that $p(u)R_2p(v)$ and so that its inverse $p^{-1}$ is a homomorphism of the dual modal algebras $p^{-1}:(\powerset(W_2),\largediamond_{\!2})\lra (\powerset(W_1),\largediamond_{\!1})$, i.e. such that $p^{-1}(\largediamond_{\!2}V)=\largediamond_{\!1}(p^{-1}V)$. This can be re-written as the familiar first-order condition typically used to define bounded morphisms.

For sorted frames, when their dual sorted residuated modal algebras are of interest, morphisms of sorted frames can be taken to be the natural generalization of bounded morphisms to the sorted case, to wit a pair of maps $(p,q):(X_2,I_2,Y_2)\lra(X_1,I_1,Y_1)$,  such that their inverses commute with the residuated set-operators $\largediamond_{\!1},\lbbox_{\!1}$ and $\largediamond_{\!2},\lbbox_{\!2}$ (equivalently, with the Galois connections). One direction of the required inclusions is ensured by requiring preservation of the frame relation, as in the unsorted case. Since inverse maps preserve unions and every set can be written as the union of the singletons of its elements, the reverse inclusion $\largediamond_{\!2}p^{-1}(U)\supseteq q^{-1}(\largediamond_{\!1} U)$ will hold iff it holds for singletons $\largediamond_{\!2}p^{-1}(\{x\})\supseteq q^{-1}(\largediamond_{\!1} \{x\})$. Rephrasing and expressing it as a first-order condition we obtain  condition \eqref{bounded-diamond}. Similarly for the other reverse inclusion, after replacing boxes with diamonds, working with co-atoms $-\{x\}$ and contraposing a number of times we obtain the equivalent first-order condition \eqref{bounded-box}.
\begin{eqnarray}
\forall x\in X_1\forall y'\in Y_2(xI_1 q(y')\lra\exists x'\in X_2(x = p(x')\wedge x'I_2y'))\label{bounded-diamond}\\
\forall x'\in X_2\forall y\in Y_1(p(x')I_1y\lra\exists y'\in Y_2(y= q(y')\wedge x'I_2y')\label{bounded-box}
\end{eqnarray}
We then arrive at the natural generalization and a {\em sorted bounded morphism} $(p,q):(X_2,I_2,Y_2)\lra(X_1,I_1,Y_1)$ is defined as a pair of maps $p:X_2\lra X_1$, $q:Y_2\lra Y_1$ such that the relation preservation condition \eqref{preservation} below,
\begin{equation}
\label{preservation}
\forall x'\in X_2\forall y'\in Y_2\;(x'I_2y'\lra p(x')I_1q(y'))
\end{equation}
as well as conditions \eqref{bounded-diamond} and \eqref{bounded-box} hold. Note that sorted bounded morphisms preserve the closure operators
\[
  p^{-1}(\lbbox_1\largediamond_1 U)=\lbbox_2\largediamond_2 p^{-1}(U)\;\mbox{ and }\; q^{-1}(\largesquare_1\lbdiamond_1 V)=\largesquare_2\lbdiamond_2 q^{-1}(V)
\]
therefore they preserve arbitrary joins, since these are closures of unions and as inverse maps preserve both unions and intersections, sorted bounded morphisms are homomorphisms of the complete lattices of Galois stable and co-stable sets.

\subsubsection{Morphisms for Sorted Residuated Frames}
\label{polarity morphisms section}
Singletons are atoms of the powerset Boolean algebras and they join-generate any subset, i.e. $U=\bigcup_{u\in U}\{u\}$, and this was used in computing the first-order conditions \eqref{bounded-diamond}, \eqref{bounded-box} for sorted bounded morphisms. For stable and co-stable sets, join generators are the closed elements $\Gamma x\;(x\in X)$ and $\Gamma y\; (y\in Y)$ so that we have, respectively, $A=\bigvee_{x\in A}\Gamma x=\bigcup_{x\in A}\Gamma x$, using Lemma \ref{basic facts}. We have, for any $x\in X$,

  \begin{tabbing}
  $q^{-1}(\largediamond_{\!1}\Gamma x)\subseteq\largediamond_{\!2} p^{-1}(\Gamma x)$\\
  \hspace*{1cm}\=iff\hskip1mm\=
 $\forall y'\in Y_2(y'\in q^{-1}(\largediamond_{\!1}\Gamma x)\;\Ra\;y'\in\largediamond_{\!2} p^{-1}(\Gamma x))$\\
  \>iff\> $\forall y'\in Y_2(q(y')\in\largediamond_{\!1}\Gamma x\;\Ra\;y'\in\largediamond_{\!2} p^{-1}(\Gamma x))$\\
  \>iff\> $\forall y'\in Y_2(\exists z\in X_1(zI_1q(y')\wedge x\leq z)\;\Ra\; y'\in\largediamond_{\!2} p^{-1}(\Gamma x))$\\
  \>iff\> $\forall y'\in Y_2(xI_1q(y')\;\Ra\; y'\in\largediamond_{\!2} p^{-1}(\Gamma x))$\\
  \>iff\> $\forall y'\in Y_2(xI_1q(y')\;\Ra\;\exists x'\in X_2(x\leq p(x')\wedge x' I_2 y')$
  \end{tabbing}
and this is the weakened version of \eqref{bounded-diamond} we shall need. We point out that
the proof used the fact that the frame relation $I$ is decreasing in both argument places (Lemma \ref{basic facts}), hence $\exists z\in X_1(zI_1q(y')\wedge x\leq z)$ iff $xI_1q(y')$.
\begin{defn}\rm
\label{pi and pi inverse defn}
If $(p,q):(X_2,I_2,Y_2)\ra(X_1,I_1,Y_1)$, with $p:X_2\ra X_1$ and $q:Y_2\ra Y_1$,
then we let $\pi=(p,q)$ and we define $\pi^{-1}$
by setting
\[
\pi^{-1}(W)=\left\{\begin{array}{cl}p^{-1}(W)\in\powerset(X_2) &\mbox{ if }W\subseteq X_1\\
q^{-1}(W)\in\powerset(Y_2) & \mbox{ if }W\subseteq Y_1\end{array}\right.
\]
 Similarly, we let
\[
\pi(w)=\left\{\begin{array}{cl}p(w)\in X_1 &\mbox{ if }w\in X_2\\ q(w)\in Y_1 &\mbox{ if }w\in Y_2
\end{array}\right.
\]
\end{defn}

\begin{lemma}\rm
\label{cond2-lemma}
If $\pi=(p,q):(X_2,I_2,Y_2)\lra(X_1,I_1,Y_1)$ is a pair of maps \mbox{$p:X_2\lra X_1$,} $q:Y_2\lra Y_1$,
then the following are equivalent
\begin{enumerate}
  \item For any increasing subset $A\subseteq X_1$, $\pi^{-1}(\largediamond_1 A)\subseteq\largediamond_2 \pi^{-1}(A)$
  \item For any $x\in X_1$, $\pi^{-1}(\largediamond_1\Gamma x)\subseteq\largediamond_2 \pi^{-1}(\Gamma x)$
  \item $\forall x\in X_1\forall y'\in Y_2(xI_1 \pi(y')\lra\exists x'\in X_2(x\leq \pi(x')\wedge x'I_2y')  )$
\end{enumerate}
\end{lemma}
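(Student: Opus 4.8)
The plan is to prove the three conditions equivalent by the cycle $(1)\Rightarrow(2)\Rightarrow(3)\Rightarrow(1)$, taking advantage of the fact that the bridge $(2)\Leftrightarrow(3)$ has already been computed in the displayed chain of equivalences immediately preceding the statement.

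First I would dispose of $(1)\Rightarrow(2)$, which is merely a specialization: each principal upper set $\Gamma x$ is increasing, so condition $(2)$ is the instance of $(1)$ obtained by setting $A=\Gamma x$. For $(2)\Rightarrow(3)$ (and in fact the reverse as well) I would invoke the pre-lemma display verbatim, rewritten through the conventions of Definition \ref{pi and pi inverse defn}: since $\pi^{-1}(\largediamond_1\Gamma x)=q^{-1}(\largediamond_1\Gamma x)$, $\largediamond_2\pi^{-1}(\Gamma x)=\largediamond_2 p^{-1}(\Gamma x)$, $\pi(y')=q(y')$ for $y'\in Y_2$ and $\pi(x')=p(x')$ for $x'\in X_2$, the inclusion asserted in $(2)$ unfolds---via the membership definition of the inverse image and the existential definition of $\largediamond_1$---into exactly the first-order sentence $(3)$. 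The single substantive move in that unfolding is the collapse of $\exists z\in X_1(zI_1\pi(y')\wedge x\leq z)$ to $xI_1\pi(y')$, which relies on $I_1$ being decreasing in its left argument (Lemma \ref{basic facts}(1)).

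The only direction carrying real content is $(3)\Rightarrow(1)$, which I would derive from $(2)$ by a join-generation argument. Given an increasing $A\subseteq X_1$, I write $A=\bigcup_{x\in A}\Gamma x$---valid exactly because $A$ is an upset, so $\Gamma x\subseteq A$ for each $x\in A$ while $A\subseteq\bigcup_{x\in A}\Gamma x$ trivially. Since $\largediamond_1$ is the existential image operator generated by $I_1$ it is completely additive, whence $\largediamond_1 A=\bigcup_{x\in A}\largediamond_1\Gamma x$; and both $\pi^{-1}$ (an inverse image) and $\largediamond_2$ (again completely additive) commute with arbitrary unions. Chaining these gives $\pi^{-1}(\largediamond_1 A)=\bigcup_{x\in A}\pi^{-1}(\largediamond_1\Gamma x)\subseteq\bigcup_{x\in A}\largediamond_2\pi^{-1}(\Gamma x)=\largediamond_2\pi^{-1}(A)$, the middle inclusion being $(2)$ applied termwise and the last equality reassembling the union back through $\largediamond_2$ and $\pi^{-1}$.

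I do not anticipate a genuine obstacle: $(1)\Rightarrow(2)$ is a specialization and $(2)\Leftrightarrow(3)$ is the pre-computed display. The one place demanding care is $(3)\Rightarrow(1)$, where one must be explicit that $\largediamond$ distributes over the possibly infinite union witnessing an arbitrary increasing set, a complete-additivity property of existential image operators, and must recognize that using principal upper sets $\Gamma x$ as join generators (rather than singletons, as for ordinary bounded morphisms) is the correct adaptation to the lattice $\gpsi$ of stable sets.
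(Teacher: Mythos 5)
Your proposal is correct and follows essentially the same route as the paper: $(1)\Rightarrow(2)$ by specialization to the increasing set $\Gamma x$, and $(2)\Leftrightarrow(3)$ by citing the chain of equivalences displayed just before the lemma. The only divergence is in closing the cycle: the paper proves $(3)\Rightarrow(1)$ by a direct pointwise argument (take $y'$ with $q(y')\in\largediamond_1 A$, pick $x\in A$ with $xI_1q(y')$, apply $(3)$, and use that $A$ is an upset to conclude $p(x')\in A$), whereas you derive $(1)$ from $(2)$ by writing $A=\bigcup_{x\in A}\Gamma x$ and invoking complete additivity of $\largediamond$ and commutation of $\pi^{-1}$ with unions --- an equally valid variant that the paper itself foreshadows in the prose preceding the lemma.
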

\begin{proof}
\begin{description}
  \item[(1)$\Ra$(2)] Immediate, since $\Gamma x=\{z\in X_1\midsp x\leq z\}\subseteq X_1$ is increasing.
  \item[(2)$\Leftrightarrow$(3)] This was shown above.
  \item[(3)$\Ra$(1)] Let $y'\in q^{-1}(\largediamond_1 A)$, i.e. $q(y')\in\largediamond_1 A$ and let then $x\in X_1$ be such that $xI_1q(y')$ and $x\in A$. From $xI_1q(y')$ and condition 3) we obtain that there exists $x'\in X_2$ such that $x'I_2y'$ and $x\leq p(x')$. Given the assumption that $A$ is an increasing subset and since $x\in A$ it follows that $p(x')\in A$, as well. This shows that $y'\in\largediamond_2p^{-1}(A)$.
\end{description}
Therefore, (1)$\Leftrightarrow$(2)$\Leftrightarrow$(3).
\end{proof}

Similarly, we obtain the following lemma.

\begin{lemma}
\label{cond3-lemma}
  \rm
 If $\pi=(p,q):(X_2,I_2,Y_2)\lra(X_1,I_1,Y_1)$ is a pair of maps \mbox{$p:X_2\lra X_1$,} $q:Y_2\lra Y_1$,
then the following are equivalent
\begin{enumerate}
  \item For any decreasing subset $B\subseteq Y_1$, $\lbbox_{\!2} \pi^{-1}(B)\subseteq \pi^{-1}(\lbbox_{\!1} B)$
  \item For any $y\in Y_1$, $\lbbox_{\!2} \pi^{-1}(-\Gamma y)\subseteq \pi^{-1}(\lbbox_{\!1} (-\Gamma y))$
%  \item For any $y\in Y$, ${}^{\upv'}g^{-1}(\Gamma y)\subseteq f^{-1}({}^{\upv}(\Gamma y))$
  \item $\forall x'\in X_2\forall y\in Y_1(\pi(x')I_1y\lra\exists y'\in Y_2(y\leq \pi(y')\wedge x'I_2y')$\hfill$\largesquare$
\end{enumerate}
\end{lemma}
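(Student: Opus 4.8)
The plan is to prove this exactly as the dual of Lemma \ref{cond2-lemma}, replacing diamonds by boxes and increasing subsets by decreasing ones, establishing the implications (1)$\Ra$(2), the equivalence (2)$\Leftrightarrow$(3), and finally (3)$\Ra$(1). For (1)$\Ra$(2) I would argue that it is immediate: by Lemma \ref{basic facts} the set $\Gamma y$ is increasing, so its complement $-\Gamma y\subseteq Y_1$ is a decreasing subset of $Y_1$, and condition 1 applied to $B=-\Gamma y$ is precisely condition 2.

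The heart of the argument is (2)$\Leftrightarrow$(3), which I would obtain by unfolding both sides of the inclusion in condition 2 pointwise for a fixed $y\in Y_1$ and then contraposing. On the left, using the definitions of $\lbbox$ and of $\pi^{-1}$ (which is $q^{-1}$ on subsets of $Y_1$) and the fact that $q(y')\in\Gamma y$ iff $y\leq q(y')$, one gets that $x'\in\lbbox_{\!2}\pi^{-1}(-\Gamma y)$ iff $\forall y'\in Y_2(x'I_2 y'\lra y\not\leq q(y'))$, equivalently $\neg\exists y'\in Y_2(x'I_2 y'\wedge y\leq q(y'))$. On the right, $x'\in\pi^{-1}(\lbbox_{\!1}(-\Gamma y))=p^{-1}(\lbbox_{\!1}(-\Gamma y))$ unfolds to $\forall y''\in Y_1(p(x')I_1 y''\lra y\not\leq y'')$; the key simplification is that this collapses to the single statement $\neg(p(x')I_1 y)$ — instantiating $y''=y$ gives the forward implication, while the converse uses that $I_1$ is decreasing in its second argument (Lemma \ref{basic facts}), so that $p(x')I_1 y''$ together with $y\leq y''$ would force $p(x')I_1 y$. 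The inclusion of condition 2 thus reads, for every $x'$, $\neg\exists y'(x'I_2 y'\wedge y\leq q(y'))\Rightarrow\neg(p(x')I_1 y)$, whose contrapositive $p(x')I_1 y\Rightarrow\exists y'(x'I_2 y'\wedge y\leq q(y'))$ is exactly the matrix of condition 3 (recalling $\pi(x')=p(x')$ and $\pi(y')=q(y')$). Quantifying over all $y\in Y_1$ and $x'\in X_2$ yields the equivalence.

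For (3)$\Ra$(1) I would mirror the corresponding step of Lemma \ref{cond2-lemma}. Given a decreasing $B\subseteq Y_1$ and $x'\in\lbbox_{\!2}\pi^{-1}(B)$, I take any $y\in Y_1$ with $p(x')I_1 y$, apply condition 3 to obtain $y'\in Y_2$ with $x'I_2 y'$ and $y\leq q(y')$, deduce $q(y')\in B$ from the membership hypothesis on $x'$ (namely $\forall y'(x'I_2 y'\lra q(y')\in B)$), and then use that $B$ is decreasing together with $y\leq q(y')$ to conclude $y\in B$. Since $y$ was arbitrary with $p(x')I_1 y$, this shows $p(x')\in\lbbox_{\!1}B$, i.e. $x'\in\pi^{-1}(\lbbox_{\!1}B)$, as required.

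I expect the only delicate step to be the bookkeeping in (2)$\Leftrightarrow$(3): the box operator is defined by an implication, and composing it with the set-complement $-\Gamma y$ and the inverse images $p^{-1},q^{-1}$ produces nested negations that must be contraposed cleanly, with the monotonicity of $I_1$ applied at precisely the right place to collapse the right-hand side. The remaining implications are routine dualizations of the diamond computations already carried out in the text preceding and inside Lemma \ref{cond2-lemma}.
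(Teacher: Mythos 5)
Your proposal is correct and is precisely the dualization that the paper intends: the paper omits the proof of Lemma \ref{cond3-lemma} (marking it as obtained ``similarly'' to Lemma \ref{cond2-lemma} after replacing diamonds by boxes, working with complements of principal upsets, and contraposing), and your argument carries out exactly that computation, including the correct use of the fact that $I_1$ is decreasing in each argument to collapse $\forall y''(p(x')I_1y''\lra y\not\leq y'')$ to $\neg(p(x')I_1y)$. Nothing to add.
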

Note that case 3) of Lemma \ref{cond3-lemma} is a weakened analogue of \eqref{bounded-box}.

\begin{defn}
  \rm\label{sorted weak bounded morphisms}
If $\pi=(p,q):(X_2,I_2,Y_2)\lra(X_1,I_1,Y_1)$ is a pair of maps \mbox{$p:X_2\lra X_1$,} $q:Y_2\lra Y_1$,
then  $(p,q)$ will be called a {\em (sorted) weak bounded morphism} iff
\begin{enumerate}
  \item $\forall x'\in X_2\forall y'\in Y_2\;(x'I_2y'\lra p(x')I_1q(y'))$
  \item $\forall x\in X_1\forall y'\in Y_2(xI_1 q(y')\lra\exists x'\in X_2(x\leq p(x')\wedge x'I_2y')  )$
  \item $\forall x'\in X_2\forall y\in Y_1(p(x')I_1y\lra\exists y'\in Y_2(y\leq q(y')\wedge x'I_2y')$
\end{enumerate}
\end{defn}

\begin{coro}\rm
\label{weak bounded are complete lattice homos coro}
The inverse $\pi^{-1}=(p,q)^{-1}$ of a weak bounded morphism is a complete lattice homomorphism of the lattices of Galois stable sets of sorted residuated frames.
\end{coro}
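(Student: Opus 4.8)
The plan is to show that $\pi^{-1}$ restricts to a map $\mathcal{G}(X_1)\to\mathcal{G}(X_2)$ between the complete lattices of stable sets that preserves arbitrary meets and joins. Everything rests on two commutation identities which I would establish first, each as a pair of opposite inclusions. For the diamond, the inclusion $\largediamond_2 p^{-1}(A)\subseteq q^{-1}(\largediamond_1 A)$ holds for \emph{every} $A\subseteq X_1$ and follows by unfolding $\largediamond_2$ and invoking the relation-preservation clause (1) of Definition \ref{sorted weak bounded morphisms}; the reverse inclusion $q^{-1}(\largediamond_1 A)\subseteq\largediamond_2 p^{-1}(A)$ is precisely the equivalence (1)$\Leftrightarrow$(3) of Lemma \ref{cond2-lemma}, valid for \emph{increasing} $A$. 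Dually, $p^{-1}(\lbbox_1 B)\subseteq\lbbox_2 q^{-1}(B)$ holds for every $B\subseteq Y_1$ by relation preservation, while the reverse inclusion is the equivalence (1)$\Leftrightarrow$(3) of Lemma \ref{cond3-lemma}, valid for \emph{decreasing} $B$. Hence $q^{-1}(\largediamond_1 A)=\largediamond_2 p^{-1}(A)$ for increasing $A$, and $\lbbox_2 q^{-1}(B)=p^{-1}(\lbbox_1 B)$ for decreasing $B$.

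Next I would show that $\pi^{-1}$ sends stable sets to stable sets. Let $A\in\mathcal{G}(X_1)$, so $A=\lbbox_1\largediamond_1 A$ and, by Lemma \ref{basic facts}, $A$ is increasing. The crucial point is that $\largediamond_1 A$ is then \emph{decreasing}: since $I_1$ is decreasing in its second argument (Lemma \ref{basic facts}), $y\in\largediamond_1 A$ and $v\preceq y$ force $v\in\largediamond_1 A$. Both identities therefore apply and chain to
\[
p^{-1}(A)=p^{-1}(\lbbox_1\largediamond_1 A)=\lbbox_2\, q^{-1}(\largediamond_1 A)=\lbbox_2\largediamond_2\, p^{-1}(A),
\]
so $p^{-1}(A)$ is a fixed point of the closure operator $\lbbox_2\largediamond_2$, i.e. $p^{-1}(A)\in\mathcal{G}(X_2)$, and $\pi^{-1}$ is a well-defined map $\mathcal{G}(X_1)\to\mathcal{G}(X_2)$.

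Finally I would verify preservation of meets and joins. Meets in $\mathcal{G}(X)$ are intersections and inverse images commute with intersections, so $\pi^{-1}\bigl(\bigwedge_i A_i\bigr)=\bigcap_i p^{-1}(A_i)=\bigwedge_i\pi^{-1}(A_i)$, each $p^{-1}(A_i)$ being stable by the previous step. For joins, using $\bigvee_i A_i=\lbbox_1\largediamond_1\bigl(\bigcup_i A_i\bigr)$, set $U=\bigcup_i A_i$; as a union of upsets $U$ is increasing, so $\largediamond_1 U$ is decreasing and the same chaining yields
\[
p^{-1}\bigl(\lbbox_1\largediamond_1 U\bigr)=\lbbox_2\, q^{-1}(\largediamond_1 U)=\lbbox_2\largediamond_2\, p^{-1}(U)=\bigvee_i p^{-1}(A_i),
\]
where the last equality uses that $p^{-1}$ preserves unions and that $p^{-1}(U)=\bigcup_i p^{-1}(A_i)$. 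Thus $\pi^{-1}\bigl(\bigvee_i A_i\bigr)=\bigvee_i\pi^{-1}(A_i)$, completing the proof; the argument for the co-stable lattices $\mathcal{G}(Y)$ is entirely dual.

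I expect the only genuine obstacle to be the monotonicity bookkeeping: the two inclusions supplied by Lemmas \ref{cond2-lemma} and \ref{cond3-lemma} are available only on increasing, respectively decreasing, arguments, and one must check this is exactly enough to make the closure-operator commutation go through. The delicate step is the claim that $\largediamond_1 U$ is decreasing whenever $U$ is increasing, which is where the frame relation being decreasing in each argument place (Lemma \ref{basic facts}) is indispensable and where the direction of $\preceq$ is easiest to get backwards.
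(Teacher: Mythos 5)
Your proposal is correct and follows essentially the same route as the paper, which dispatches the corollary as an ``immediate consequence'' of Lemmas \ref{cond2-lemma} and \ref{cond3-lemma} after noting (in the discussion preceding Section \ref{polarity morphisms section}) that the inverse commutes with the closure operators, that meets are intersections, and that joins are closures of unions. You have merely made explicit the monotonicity bookkeeping (stable sets are increasing, $\largediamond_1 A$ is decreasing) that the paper leaves implicit, and this checks out.
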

\begin{proof}
 The proof is an immediate consequence of Lemmas \ref{cond2-lemma} and \ref{cond3-lemma}.
\end{proof}

Summarizing, we have shown that all squares in the diagrams in the middle and right below commute
  \[
  \xymatrix{
(X_1,\;\upv_1,\;Y_1)       & \powerset^\uparrow(X_1)\ar@<0.5ex>[r]^{\Diamond_1}\ar[d]^{p^{-1}}    &\powerset^\downarrow(Y_1)\ar[d]^{q^{-1}} \ar@<0.5ex>[l]^{\bbox_1}
                     & \powerset^\uparrow(X_1)\ar@<0.5ex>[r]^{(\;)^{\upv_1}}\ar[d]^{p^{-1}}  &\powerset^\uparrow(Y_1)\ar[d]^{q^{-1}} \ar@<0.5ex>[l]^{{}^{\upv_1}(\;)}
\\
(X_2,\upv_2,Y_2)\ar@<3ex>[u]^{p}\ar@<-3ex>[u]_{q}
                     & \powerset^\uparrow(X_2)\ar@<0.5ex>[r]^{\Diamond_2}      &\powerset^\downarrow(Y_2)\ar@<0.5ex>[l]^{\bbox_2}
                     & \powerset^\uparrow(X_2)\ar@<0.5ex>[r]^{(\;)^{\upv_2}}  &\powerset^\uparrow(Y_2)\ar@<0.5ex>[l]^{{}^{\upv_2}}
  }
  \]
where $\powerset^\uparrow$, $\powerset^\downarrow$ deliver the set of increasing and decreasing, respectively, subsets.
\begin{rem}\rm
Goldblatt \cite{goldblatt-morphisms2019} was first to propose bounded morphisms for polarities and our definition is equivalent to his. For morphisms of frames with relations (restricted in \cite{goldblatt-morphisms2019} to relations of sort type that generates either join-preserving or meet-preserving operators only) we will diverge from his definition, as the relations Goldblatt considers on frames, expanding on Gehrke's \cite{mai-gen}, do not coincide with ours and they can be in fact construed as the Galois duals of the frame relations we consider. In \cite{kata2z} we discussed the connections between our approach and Gehrke's generalized Kripke frames approach.
\end{rem}

\subsubsection{Morphisms for Frames with Relations}
\label{morphisms for relations section}
Let $\pi=(p,q):(X_2,I_2,Y_2,(S_\sigma)_{\sigma\in \tau})\lra(X_1,I_1,Y_1,(R_\sigma)_{\sigma\in \tau})$ be a weak bounded morphism of frames of the same similarity type $\tau$ and
  let $R_\sigma,S_\sigma$ be corresponding relations in the two frames, of the same sort type. For simplicity, we omit the subscript $\sigma$ in the sequel.

\begin{prop}\rm
\label{condition for frame morphisms prop}
  If for any $\vec{u}$ it holds that $\pi^{-1}\overline{\alpha}_R(\Gamma\vec{u})=\overline{\alpha}_S(\pi^{-1}[\Gamma\vec{u}])$, then for any tuple $\vec{F}$ of Galois sets of the required sort $\pi^{-1}\overline{\alpha}_R(\vec{F})=\overline{\alpha}_S(\pi^{-1}[\vec{F}])$.
\end{prop}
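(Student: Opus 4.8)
The plan is to reduce the asserted identity on arbitrary tuples of Galois sets to the hypothesized identity on closed elements $\Gamma\vec{u}$, using three ingredients that are already available. First, every Galois set is the join of the closed elements it contains: by Lemma \ref{basic facts}, $F_j=\bigvee_{u_j\in F_j}\Gamma u_j$. Second, $\overline{\alpha}_R$ and $\overline{\alpha}_S$ distribute over arbitrary joins of Galois sets in each argument place; this is Theorem \ref{distribution from stability thm} together with Lemma \ref{conjugate-residual}, applicable since the frames satisfy the stability axiom FAx4. Third, $\pi^{-1}$ is a complete lattice homomorphism of the lattices of Galois sets and so preserves arbitrary joins (Corollary \ref{weak bounded are complete lattice homos coro} and the commuting diagrams following it); in particular $\pi^{-1}$ sends each closed element $\Gamma u$ to a Galois set, so that $\overline{\alpha}_S$ may legitimately be applied to the tuple $\pi^{-1}[\Gamma\vec{u}]$.

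First I would expand the left-hand side. Writing each $F_j=\bigvee_{u_j\in F_j}\Gamma u_j$ and iterating the complete distributivity of $\overline{\alpha}_R$ over the $n$ argument places gives $\overline{\alpha}_R(\vec{F})=\bigvee_{\vec{u}\in\vec{F}}\overline{\alpha}_R(\Gamma\vec{u})$. Applying $\pi^{-1}$ and using that it preserves joins of Galois sets yields $\pi^{-1}\overline{\alpha}_R(\vec{F})=\bigvee_{\vec{u}\in\vec{F}}\pi^{-1}\overline{\alpha}_R(\Gamma\vec{u})$, whereupon the hypothesis rewrites each summand to produce
\[
\pi^{-1}\overline{\alpha}_R(\vec{F})=\bigvee_{\vec{u}\in\vec{F}}\overline{\alpha}_S(\pi^{-1}[\Gamma\vec{u}]).
\]

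Next I would run the same computation in reverse on the source side. Since $\pi^{-1}$ preserves joins, $\pi^{-1}(F_j)=\bigvee_{u_j\in F_j}\pi^{-1}(\Gamma u_j)$ for each $j$, and distributing $\overline{\alpha}_S$ over these joins place-by-place collapses the iterated join into a single application,
\[
\overline{\alpha}_S(\pi^{-1}[\vec{F}])=\bigvee_{\vec{u}\in\vec{F}}\overline{\alpha}_S(\pi^{-1}[\Gamma\vec{u}]).
\]
Comparing this with the expression obtained above gives the desired equality $\pi^{-1}\overline{\alpha}_R(\vec{F})=\overline{\alpha}_S(\pi^{-1}[\vec{F}])$.

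The computation is routine once the three ingredients are marshalled; the points demanding care are bookkeeping rather than conceptual. The main thing to check is that join-preservation of $\pi^{-1}$ holds uniformly across both sorts, since the argument places of $\overline{\alpha}_R$ are of mixed sort depending on $\sigma$ and the output may be stable or co-stable; this is where one must invoke the full strength of Corollary \ref{weak bounded are complete lattice homos coro} (and its co-stable dual, which follows from the compatibility of $\pi^{-1}$ with the Galois connection shown in the diagrams) rather than merely preservation of joins of stable sets. A secondary point is to confirm that the index set of the join, namely the tuples $\vec{u}\in\vec{F}$, is literally the same on both sides after distributing, which it is because distributivity is applied place-by-place over the same ranges $u_j\in F_j$.
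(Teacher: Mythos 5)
Your proof is correct and follows essentially the same route as the paper's: decompose each $F_j$ into the join of its closed elements, use complete distributivity of $\overline{\alpha}_R$ and join-preservation of $\pi^{-1}$ to reduce the left-hand side to $\bigvee_{\vec{u}\in\vec{F}}\overline{\alpha}_S(\pi^{-1}[\Gamma\vec{u}])$, and then identify this with $\overline{\alpha}_S(\pi^{-1}[\vec{F}])$. The only (harmless) difference is in the last step: the paper decomposes each $\pi^{-1}[\Gamma\vec{u}]$ further into closed elements $\Gamma\vec{w}$ of the source frame and reindexes the resulting double join over $\vec{w}\in\pi^{-1}[\vec{F}]$, whereas you distribute $\overline{\alpha}_S$ directly over the joins $\pi^{-1}(F_j)=\bigvee_{u_j\in F_j}\pi^{-1}(\Gamma u_j)$ and match index sets, which amounts to the same computation.
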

\begin{proof}
We have
\begin{tabbing}
$\pi^{-1}\overline{\alpha}_R(\vec{F})$\hskip2mm\==\hskip1mm\= $\pi^{-1}\bigvee_{\vec{u}\in\vec{F}}\overline{\alpha}_R(\Gamma\vec{u})$\hskip1.4cm\= By Lemma \ref{basic facts} and Theorem \ref{distribution from stability thm}\\
\>=\> $\bigvee_{\vec{u}\in\vec{F}}\pi^{-1}\overline{\alpha}_R(\Gamma\vec{u})$\> By Corollary \ref{weak bounded are complete lattice homos coro}\\
\>=\> $\bigvee_{\vec{u}\in\vec{F}}\overline{\alpha}_S(\pi^{-1}[\Gamma\vec{u}])$\> By the hypothesis of the Lemma\\
\>=\> $\bigvee_{\vec{u}\in\vec{F}}\bigvee_{\vec{w}\in\pi^{-1}[\Gamma \vec{u}]}\overline{\alpha}_S(\Gamma\vec{w})$ \> By Lemma \ref{basic facts} and Theorem \ref{distribution from stability thm}\\
\>=\> $\bigvee_{\vec{u}\leq\pi[\vec{w}]}^{\vec{u}\in\vec{F}}\overline{\alpha}_S(\Gamma\vec{w})$ \\
\>=\> $\bigvee_{\vec{w}\in\pi^{-1}[\vec{F}]}\overline{\alpha}_S(\Gamma\vec{w})$\>=\ $\overline{\alpha}_S(\pi^{-1}[\vec{F}])$
\end{tabbing}
For the last line, note that if $\vec{u}\in\vec{F}$ and $\vec{u}\leq\pi[\vec{w}]$, then $\pi[\vec{w}]\in\vec{F}$, since Galois sets are increasing. Hence $\bigvee_{\vec{u}\leq\pi[\vec{w}]}^{\vec{u}\in\vec{F}}\overline{\alpha}_S(\Gamma\vec{w})\subseteq \bigvee_{\vec{w}\in\pi^{-1}[\vec{F}]}\overline{\alpha}_S(\Gamma\vec{w})$. Conversely, if $\vec{w}\in\pi^{-1}[\vec{F}]$, let $\vec{u}=\pi[\vec{w}]$, so that $\vec{u}\in\vec{F}$ and $\vec{u}\leq\pi[\vec{w}]$, which shows that the converse inclusion also holds.
\end{proof}

\begin{lemma}\rm
\label{condition for frame morphisms lemma}
Assuming the frame axioms of Table \ref{frame axioms table}, the condition in the statement of Proposition \ref{condition for frame morphisms prop} can be replaced by the requirement in equation \eqref{equation for morphisms}, which is equivalent to condition  \eqref{equiv condition for morphisms}
\begin{eqnarray}
\pi^{-1}\alpha_R(\Gamma \vec{u})&=&\alpha_S(\pi^{-1}[\Gamma\vec{u}])\label{equation for morphisms}\\
\pi(v)R\vec{u} &\mbox{ iff }& \exists\vec{w}(\vec{u}\leq\pi[\vec{w}]\;\wedge\; vS\vec{w})\label{equiv condition for morphisms}
\end{eqnarray}
\end{lemma}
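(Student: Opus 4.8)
The plan is to push everything down to the behaviour of the two image operators on \emph{closed} Galois sets, where by Lemma \ref{alpha bar on closed} the Galois closure does nothing: under axioms 1--3 of Table \ref{frame axioms table} we have $\overline{\alpha}_R(\Gamma\vec{u})=\alpha_R(\Gamma\vec{u})=R\vec{u}=\Gamma(\widehat{f}_R(\vec{u}))$, and likewise for $S$ in the domain frame. There are two things to establish. First, that imposing \eqref{equation for morphisms} is enough to guarantee the hypothesis $\pi^{-1}\overline{\alpha}_R(\Gamma\vec{u})=\overline{\alpha}_S(\pi^{-1}[\Gamma\vec{u}])$ of Proposition \ref{condition for frame morphisms prop}, so that the latter may be replaced by the former. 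Second, that \eqref{equation for morphisms} is nothing but the element-wise statement \eqref{equiv condition for morphisms}. The only delicate point throughout is the bookkeeping between the raw sorted image operator $\alpha$ of \eqref{sorted image ops} and its Galois closure $\overline{\alpha}$, together with the sorts of the tuples involved.

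For the first task I would begin from $\pi^{-1}\overline{\alpha}_R(\Gamma\vec{u})=\pi^{-1}\alpha_R(\Gamma\vec{u})$, which is immediate from Lemma \ref{alpha bar on closed}. Since $\alpha_R(\Gamma\vec{u})=R\vec{u}=\Gamma(\widehat{f}_R(\vec{u}))$ is a closed Galois set and, by Corollary \ref{weak bounded are complete lattice homos coro}, $\pi^{-1}$ carries Galois stable (resp.\ co-stable) sets to Galois sets, the set $\pi^{-1}\overline{\alpha}_R(\Gamma\vec{u})$ is itself Galois. Now assuming \eqref{equation for morphisms}, the set $\alpha_S(\pi^{-1}[\Gamma\vec{u}])$ equals this Galois set, hence is itself Galois and therefore coincides with its own closure; consequently $\overline{\alpha}_S(\pi^{-1}[\Gamma\vec{u}])=(\alpha_S(\pi^{-1}[\Gamma\vec{u}]))''=\alpha_S(\pi^{-1}[\Gamma\vec{u}])=\pi^{-1}\alpha_R(\Gamma\vec{u})=\pi^{-1}\overline{\alpha}_R(\Gamma\vec{u})$, which is exactly the hypothesis of Proposition \ref{condition for frame morphisms prop}. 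This gives the sufficiency that the replacement requires; the converse implication would force $\alpha_S(\pi^{-1}[\Gamma\vec{u}])$ to be Galois already, so adopting the closure-free form \eqref{equation for morphisms} is precisely the natural strengthening to the frame level.

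For the second task I would simply unwind both sides of \eqref{equation for morphisms}. By axiom 3 and \eqref{sorted image ops} we have $\alpha_R(\Gamma\vec{u})=R\vec{u}$, so, tracking the output sort $i_{n+1}$ through the convention for $\pi$ and $\pi^{-1}$, a point $v$ lies in $\pi^{-1}\alpha_R(\Gamma\vec{u})$ exactly when $\pi(v)R\vec{u}$. On the other side, $\alpha_S(\pi^{-1}[\Gamma\vec{u}])=\bigcup_{\vec{w}\in\pi^{-1}[\Gamma\vec{u}]}S\vec{w}$ by \eqref{sorted image ops}, so $v$ belongs to it iff there is a tuple $\vec{w}$ with $vS\vec{w}$ and $\vec{w}\in\pi^{-1}[\Gamma\vec{u}]$; the membership $\vec{w}\in\pi^{-1}[\Gamma\vec{u}]$ means componentwise $\pi(w_j)\in\Gamma u_j$, i.e.\ $\vec{u}\leq\pi[\vec{w}]$ (using that the frame is separated). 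Hence the two sides of \eqref{equation for morphisms} agree as sets precisely when, for every $v$ of sort $i_{n+1}$, $\pi(v)R\vec{u}$ holds iff $\exists\vec{w}\,(\vec{u}\leq\pi[\vec{w}]\wedge vS\vec{w})$, which is \eqref{equiv condition for morphisms}.

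The main obstacle is not conceptual but lies in the closure/sort bookkeeping of the first task: one must verify that $\pi^{-1}[\Gamma\vec{u}]$ is genuinely a tuple of Galois sets of the correct sorts (so that $\overline{\alpha}_S$ is defined on it) and that the collapse $\overline{\alpha}=\alpha$ on closed elements of Lemma \ref{alpha bar on closed} is exactly what lets one pass between \eqref{equation for morphisms} and the closed form of the hypothesis of Proposition \ref{condition for frame morphisms prop}. The one genuinely substantive ingredient, that $\pi^{-1}$ preserves Galois sets, is already supplied by Corollary \ref{weak bounded are complete lattice homos coro}, so the remaining work is the routine set-theoretic unwinding carried out above.
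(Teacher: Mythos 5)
Your proof is correct and follows essentially the same route as the paper's: it uses Lemma \ref{alpha bar on closed} to collapse $\overline{\alpha}$ to $\alpha$ on closed elements, infers that \eqref{equation for morphisms} forces $\alpha_S(\pi^{-1}[\Gamma\vec{u}])$ to be a Galois set equal to its own closure (so the hypothesis of Proposition \ref{condition for frame morphisms prop} follows), and then unwinds both sides of \eqref{equation for morphisms} pointwise to obtain \eqref{equiv condition for morphisms}. The extra bookkeeping you supply (that $\pi^{-1}$ preserves Galois sets, via Corollary \ref{weak bounded are complete lattice homos coro}) is exactly what the paper's proof uses implicitly.
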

\begin{proof}
  By Lemma \ref{alpha bar on closed} for any relation $R$ in the frame it holds that $\overline{\alpha}_R(\Gamma\vec{u})=R\vec{u}=\alpha_R(\Gamma\vec{u})$. It follows that if equation \eqref{equation for morphisms} holds, then $\alpha_S(\pi^{-1}[\Gamma\vec{u}])$ is a Galois set, hence $\alpha_S(\pi^{-1}[\Gamma\vec{u}])=\overline{\alpha}_S(\pi^{-1}[\Gamma\vec{u}])$. Hence the identity in \eqref{equation for morphisms} implies the hypothesis in the statement of Proposition \ref{condition for frame morphisms prop}.

  Since $\alpha_S(\pi^{-1}[\Gamma\vec{u}])=\bigcup_{\vec{w}\in\pi^{-1}[\Gamma\vec{u}]}\alpha_S(\Gamma\vec{w})= \bigcup_{\vec{u}\leq\pi[\vec{w}]}\alpha_S(\Gamma\vec{w})=\bigcup_{\vec{u}\leq\pi[\vec{w}]}S\vec{w}$ and $v\in\pi^{-1}\alpha_R(\Gamma \vec{u})$ iff $\pi(v)R\vec{u}$ it follows that the two conditions in \eqref{equation for morphisms} and \eqref{equiv condition for morphisms} are equivalent.
\end{proof}

We conclude with the definition of the category of $\tau$-frames, for a similarity type (sequence of distribution types) $\tau$.

\begin{defn}\rm\label{tau-frames defn}
The objects $\mathfrak{F}=(X,I,Y,(R_\sigma)_{\sigma\in\tau})$ of the category {\bf SRF}$_\tau$ of $\tau$-frames are sorted residuated frames (equivalently, polarities) with a relation of sort type $\sigma$, for each $\sigma$ in $\tau$, subject to axioms FAx1-FAx4 of Table \ref{frame axioms table}. Its morphisms $\pi=(p,q):(X_2,I_2,Y_2,(S_\sigma)_{\sigma\in \tau})\lra(X_1,I_1,Y_1,(R_\sigma)_{\sigma\in \tau})$ are the weak bounded morphisms specified in Definition \ref{sorted weak bounded morphisms} that, in addition, satisfy condition \eqref{equiv condition for morphisms} (axiom MAx4). Table \ref{axioms for the category of frames} collects together all axioms.
\end{defn}

Hereafter, by a weak bounded morphism we shall always mean that condition \eqref{equiv condition for morphisms} (axiom MAx4) is also satisfied.

\begin{table}[t]
\label{axioms for the category of frames}
\caption{Axioms for the frame Category {\bf SRF}$_\tau$}
\begin{enumerate}
  \item[FAx1)] The frame is separated
  \item[FAx2)] For each $\sigma=(\vec{i_j};i_{n+1})$ in the similarity type $\tau$, each $\vec{u}\in\prod_{j=1}^{j=n}Z_{i_j}$, $R_\sigma\vec{u}$ is a closed element of $\mathcal{G}(Z_{i_{n+1}})$
  \item[FAx3)] For each $\sigma=(\vec{i_j};i_{n+1})$ in the similarity type $\tau$, each $w\in Z_{i_{n+1}}$, the $n$-ary relation $wR_\sigma$ is decreasing in every argument place
  \item[FAx4)] All sections of the Galois dual relations $R'_\sigma$ of $R_\sigma$, for each $\sigma$ in $\tau$, are Galois sets\\[1mm]
  For $\pi=(p,q):(X_2,I_2,Y_2,(S_\sigma)_{\sigma\in \tau})\lra(X_1,I_1,Y_1,(R_\sigma)_{\sigma\in \tau})$
  \item[MAx1)] $\forall x'\in X_2\forall y'\in Y_2\;(x'I_2y'\lra p(x')I_1q(y'))$
  \item[MAx2)] $\forall x\in X_1\forall y'\in Y_2(xI_1 q(y')\lra\exists x'\in X_2(x\leq p(x')\wedge x'I_2y')  )$
  \item[MAx3)] $\forall x'\in X_2\forall y\in Y_1(p(x')I_1y\lra\exists y'\in Y_2(y\leq q(y')\wedge x'I_2y')$
  \item[MAx4)] for all $\vec{u}$ and $v$, $\pi(v)R_\sigma\vec{u}$ iff there exists $\vec{w}$ such that $\vec{u}\leq\pi[\vec{w}]$ and $vS_\sigma\vec{w})$
\end{enumerate}

Note that axiom MAx4 assumes proper sorting of $\vec{u}, v$ and $\vec{w}$ as necessitated by the sorting of the relations. We also point out that the axiomatization of the category {\bf SRF}$_\tau$ will be strengthened in Section \ref{stone section} for the purpose of deriving a Stone duality result.
\end{table}

\section{Dual Sorted, Residuated Frames of NLE's}
\label{representation section}
A bounded lattice expansion is a structure $\mathcal{L}=(L,\leq,\wedge,\vee,0,1,\mathcal{F}_1,\mathcal{F}_\partial)$, where $\mathcal{F}_1$ consists of normal lattice operators $f$ of distribution type $\delta(f)=(i_1,\ldots,i_n;1)$ (i.e. of output type 1), while $\mathcal{F}_\partial$  consists of normal lattice operators $h$ of distribution type $\delta(h)=(t_1,\ldots,t_n;\partial)$ (i.e. of output type $\partial$). For representation purposes, nothing depends on the size of the operator families $\mathcal{F}_1$ and $\mathcal{F}_\partial$ and we may as well assume that they contain a single member, say $\mathcal{F}_1=\{f\}$ and $\mathcal{F}_\partial=\{h\}$. In addition, nothing depends on the arity of the operators, so we may assume they are both $n$-ary.

\subsection{Canonical Lattice Frame Construction}
\label{canonical frame section}
The canonical frame is constructed as follows, based on \cite{iulg,sdl,dloa,sdl-exp} (reviewed in Section \ref{normal lat exp section}).\\

First, the base polarity $\mathfrak{F}=(\filt(\mathcal{L}),\upv,\idl(\mathcal{L}))$ consists of the sets $X=\filt(\mathcal{L})$ of filters and $Y=\idl(\mathcal{L})$ of ideals of the lattice and the relation $\upv\;\subseteq\filt(\mathcal{L})\times\idl(\mathcal{L})$ is defined by $x\upv y$ iff $x\cap y\neq\emptyset$, while the representation map $\zeta_1$ sends a lattice element $a\in L$ to the set of filters that contain it, $\zeta_1(a)=\{x\in X\midsp a\in x\}=\{x\in X\midsp x_a\subseteq x\}=\Gamma x_a$. Similarly, a co-represenation map $\zeta_\partial$ is defined by $\zeta_\partial(a)=\{y\in Y\midsp a\in y\}=\{y\in Y\midsp y_a\subseteq y\}=\Gamma y_a$. It is easily seen that $(\zeta_1(a))'=\zeta_\partial(a)$ and, similarly, $(\zeta_\partial(a))'=\zeta_1(a)$. The images of $\zeta_1,\zeta_\partial$ are precisely the families (sublattices of $\gpsi,\gphi$, respectively) of clopen elements of $\gpsi,\gphi$, since clearly $\Gamma x_a={}\rperp\{y_a\}$ and $\Gamma y_a=\{x_a\}\rperp$. For further details the reader is referred to \cite{iulg,sdl}.

Second, for each normal lattice operator a relation is defined, such that if $\delta=(i_1,\ldots,i_n;i_{n+1})$ is the distribution type of the operator, then $\sigma=(i_{n+1};i_1\cdots i_n)$ is the sort type of the relation. Without loss of generality, we have restricted to the families of operators $\mathcal{F}_1=\{f\}$ and $\mathcal{F}_\partial=\{h\}$, so that we shall define two corresponding relations $R,S$ of respective sort types $\sigma(R)=(1;i_1\cdots i_n)$ and $\sigma(S)=(\partial;t_1\cdots t_n)$, where for each $j$, $i_j$ and $t_j$ are in $\{1,\partial\}$. In other words
%\[
$R\;\subseteq\; X\times\prod_{j=1}^{j=n}Z_{i_j}$ and $S\;\subseteq\; Y\times \prod_{j=1}^{j=n}Z_{t_j}$.
%\]
To define the relations, we use the point operators introduced in \cite{dloa} (see also \cite{sdl-exp}). In the generic case we examine, we need to define two sorted operators
\begin{eqnarray*}
\widehat{f}&:\prod_{j=1}^{j=n}Z_{i_j}\lra Z_1\hskip0.8cm
\widehat{h}&: \prod_{j=1}^{j=n}Z_{t_j}\lra Z_\partial\hskip0.8cm(\mbox{recall that }Z_1=X, Z_\partial=Y)
\end{eqnarray*}
Assuming for the moment that the point operators have been defined, the canonical relations $R,S$ are defined by
\begin{eqnarray}
xR\vec{u} & \mbox{ iff }& \widehat{f}(\vec{u})\subseteq x\;\; (\mbox{for }\; x\in X\;\mbox{ and }\;\vec{u}\in \prod_{j=1}^{j=n}Z_{i_j})\nonumber\\
yS\vec{v} &\mbox{ iff }& \widehat{h}(\vec{v})\subseteq y\;\; (\mbox{for }\; y\in Y\;\mbox{ and }\;\vec{v}\in \prod_{j=1}^{j=n}Z_{t_j})\label{canonical relations defn}
\end{eqnarray}
Returning to the point operators and letting $x_e,y_e$ be the principal filter and principal ideal, respectively, generated by a lattice element $e$, these are uniformly defined as follows, for $\vec{u}\in \prod_{j=1}^{j=n}Z_{i_j}$ and $\vec{v}\in \prod_{j=1}^{j=n}Z_{t_j})$
\begin{eqnarray}
% \nonumber % Remove numbering (before each equation)
  \widehat{f}(\vec{u}) \;=\; \bigvee\{x_{f(\vec{a})}\midsp\vec{a}\in\vec{u}\}\hskip1.5cm
  \widehat{h}(\vec{v}) \;=\; \bigvee\{y_{h(\vec{a})}\midsp\vec{a}\in\vec{v}\}\label{canonical point operators defn}
\end{eqnarray}
In other words, $\widehat{f}(\vec{u})=\langle\{f(\vec{a})\midsp \vec{a}\in\vec{u}\}\rangle$ is the filter generated by the set $\{f(\vec{a})\midsp \vec{a}\in\vec{u}\}$ and similarly $\widehat{h}(\vec{v})$ is the ideal generated by the set $\{h(\vec{a})\midsp \vec{a}\in\vec{v}\}$.

\begin{ex}[FL$_{ew}$]\rm
\label{example and strategy ex 3}
We consider as an example the case of associative, commutative, integral residuated lattices $\mathcal{L}=(L,\leq,\wedge,\vee,0,1,\circ,\ra)$, the algebraic models of {\bf FL}$_{ew}$ (the associative full Lambek calculus with exchange and weakening), also referred to in the literature as full {\bf BCK}. By residuation of $\circ,\ra$, the distribution types of the operators are $\delta(\circ)=(1,1;1)$ and $\delta(\ra)=(1,\partial;\partial)$. Let $(\filt(\mathcal{L}),\upv,\idl(\mathcal{L}))$ be the canonical frame of the bounded lattice $(L,\leq,\wedge,\vee,0,1)$. Designate the corresponding canonical point operators by $\overt$ and $\leadsto$, respectively. They are defined by \eqref{canonical point   operators defn}
\begin{eqnarray*}
x\overt z &=& \bigvee\{x_{a\circ c}\midsp a\in x\;\wedge\; c\in z\}\in\filt(\mathcal{L})\hskip1cm(x,z\in\filt(\mathcal{L}))\\
x\leadsto v &=&\bigvee\{y_{a\ra c}\midsp a\in x\;\wedge\; c\in v\}\in\idl(\mathcal{L})\hskip9.3mm (x\in\filt(\mathcal{L}),v\in\idl(\mathcal{L}))
\end{eqnarray*}
where recall that we write $x_e,y_e$ for the principal filter and ideal, respectively, generated by the lattice element $e$, so that $x\overt z\in \filt(\mathcal{L})$, while $(x\leadsto v)\in \idl(\mathcal{L})$.

The relations $R^{111},S^{\partial 1\partial}$ are then defined by
\[
uR^{111}xz\;\mbox{ iff }\; x\overt z\subseteq u\hskip2cm yS^{\partial 1\partial}xv\;\mbox{ iff }\;(x\leadsto v)\subseteq y
\]
of sort types $\sigma(R)=(1;11)$ and $\sigma(S)=(\partial;1\partial)$. The canonical {\bf FL}$_{ew}$-frame is therefore the structure $\mathfrak{F}=(\filt(\mathcal{L}),\upv,\idl(\mathcal{L}),R^{111}, S^{\partial 1\partial})$.
\end{ex}

\subsection{Properties of the Canonical Frame}
We first verify that axioms FAx1-FAx3 of Table \ref{axioms for the category of frames} hold for the canonical sorted residuated frame (polarity).
\label{canonical-properties}
\begin{lemma}\rm
\label{elementary props in canonical}
The following hold for the canonical frame.
\begin{enumerate}
\item The frame is separated
\item For $\vec{u}\in \prod_{j=1}^{j=n}Z_{i_j}$ and $\vec{v}\in \prod_{j=1}^{j=n}Z_{t_j}$ the sections $R\vec{u}$ and $S\vec{v}$ are closed elements of $\gpsi$ and $\gphi$, respectively
\item For $x\in X, y\in Y$, the $n$-ary relations $xR, yS$ are decreasing in every argument place
\end{enumerate}
\begin{proof}
  \mbox{}
For 1),
just note that the ordering $\preceq$ is set-theoretic inclusion (of filters, and of ideals, respectively), hence separation of the frame is immediate.

For 2),  by the definition of the relations,  $R\vec{u}=\{x\midsp \widehat{f}(\vec{u})\subseteq x\}=\Gamma(\widehat{f}(\vec{u}))$ is a closed element of $\gpsi$ and similarly for $S\vec{v}$.

For 3),  if $w\subseteq u_k$, then $\{x_{f(a_1,\ldots,a_n)}\midsp a_k\in w\wedge \bigwedge_{j\neq k}(a_j\in u_j)\}$ is a subset of the set $\{x_{f(a_1,\ldots,a_n)}\midsp  \bigwedge_{j}(a_j\in u_j)\}$, hence taking joins it follows that $\widehat{f}(\vec{u}[w]_k)\subseteq\widehat{f}(\vec{u})$. By definition, if $xR\vec{u}$ holds, then we obtain $\widehat{f}(\vec{u}[w]_k)\subseteq\widehat{f}(\vec{u})\subseteq x$, hence $xR\vec{u}[w]_k$ holds as well. Similarly for the relation $S$.
\end{proof}
\end{lemma}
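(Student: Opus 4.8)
The plan is to dispatch the three parts separately, in each case unwinding the definitions of the canonical relations $R,S$ through the point operators $\widehat{f},\widehat{h}$ of \eqref{canonical point operators defn}. For claim 1, I would first show that the induced preorder $\preceq$ on $X=\filt(\mathcal{L})$ coincides with inclusion of filters. Recall that $x\preceq z$ means $\{x\}\rperp\subseteq\{z\}\rperp$, where $\{x\}\rperp=\{y\in Y\midsp x\cap y\neq\emptyset\}$. The inclusion $x\subseteq z$ immediately gives $x\preceq z$, since $x\cap y\neq\emptyset$ forces $z\cap y\neq\emptyset$. Conversely, given $a\in x$, testing against the principal ideal $y_a$ (which contains $a$) yields $x\upv y_a$, so $x\preceq z$ gives $z\upv y_a$, producing some $b\in z$ with $b\leq a$; as $z$ is upward closed, $a\in z$. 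Hence $\preceq$ is inclusion of filters, and dually $\preceq$ on $Y=\idl(\mathcal{L})$ is inclusion of ideals. Both relations are antisymmetric, so the frame is separated; this is essentially the content of the lattice representation of \cite{sdl}.

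For claim 2, I would simply rewrite the section via the definition $xR\vec{u}$ iff $\widehat{f}(\vec{u})\subseteq x$. Since $\widehat{f}(\vec{u})$ is itself a filter, the section becomes $R\vec{u}=\{x\in X\midsp\widehat{f}(\vec{u})\subseteq x\}=\Gamma(\widehat{f}(\vec{u}))$, the principal up-set of a single point under the inclusion order identified in claim 1, which is a closed element of $\gpsi$ (a Galois set, by Lemma \ref{basic facts}). The identical computation with $\widehat{h}$ gives $S\vec{v}=\Gamma(\widehat{h}(\vec{v}))$, a closed element of $\gphi$. For claim 3, the crux is monotonicity of the point operators: if $w\preceq u_k$ then the generating set shrinks, $\{f(\vec{a})\midsp\vec{a}\in\vec{u}[w]_k\}\subseteq\{f(\vec{a})\midsp\vec{a}\in\vec{u}\}$, so $\widehat{f}(\vec{u}[w]_k)\subseteq\widehat{f}(\vec{u})$. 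Consequently, if $xR\vec{u}$ holds, i.e. $\widehat{f}(\vec{u})\subseteq x$, then $\widehat{f}(\vec{u}[w]_k)\subseteq x$ as well, giving $xR\vec{u}[w]_k$; this is precisely decreasingness at the $k$-th argument place, and the same argument applied to $\widehat{h}$ handles $S$.

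I do not expect a genuine obstacle here, since each part reduces to the definitions together with monotonicity of the generating-filter and generating-ideal constructions. The only point demanding care is the uniform treatment of the sorts in claim 3: one must confirm that $w\preceq u_k$ amounts to $w\subseteq u_k$ irrespective of whether the $k$-th argument ranges over filters or over ideals, which is exactly what claim 1 secures.
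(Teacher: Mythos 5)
Your proposal is correct and follows essentially the same route as the paper: identify $\preceq$ with inclusion of filters/ideals for separation, observe that $R\vec{u}=\Gamma(\widehat{f}(\vec{u}))$ for closedness of sections, and use monotonicity of the point operators (shrinking generating sets) for decreasingness. The only difference is that you spell out the proof that $\preceq$ coincides with inclusion (via testing against principal ideals), which the paper simply asserts and defers to the lattice representation of \cite{sdl}.
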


\begin{lemma}\rm\label{unified relational}
In the canonical frame, $xR\vec{u}$ holds iff $\forall\vec{a}\in L^n\;(\vec{a}\in\vec{u}\lra f(\vec{a})\in x)$. Similarly, $yS\vec{v}$ holds iff $\forall\vec{a}\in L^n\;(\vec{a}\in\vec{v}\lra h(\vec{a})\in y)$.
\end{lemma}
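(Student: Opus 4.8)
The plan is to unfold the definition of the canonical relation $R$ and reduce the containment $\widehat{f}(\vec{u})\subseteq x$ to a statement about the \emph{generators} of the filter $\widehat{f}(\vec{u})$. Recall from \eqref{canonical relations defn} that $xR\vec{u}$ holds precisely when $\widehat{f}(\vec{u})\subseteq x$, and from \eqref{canonical point operators defn} that $\widehat{f}(\vec{u})=\langle\{f(\vec{a})\midsp\vec{a}\in\vec{u}\}\rangle$ is the filter generated by the set $G=\{f(\vec{a})\midsp\vec{a}\in\vec{u}\}$ of lattice elements. Thus the whole lemma is, at bottom, the universal property of the generated filter (and dually the generated ideal) spelled out.

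First I would establish the elementary fact that, for any subset $G\subseteq L$ and any filter $x\in\filt(\mathcal{L})$, one has $\langle G\rangle\subseteq x$ iff $G\subseteq x$. The left-to-right direction is immediate since $G\subseteq\langle G\rangle$; conversely, if $G\subseteq x$, then $x$ is a filter containing $G$, hence contains the least such filter, namely $\langle G\rangle$. Applying this with $G=\{f(\vec{a})\midsp\vec{a}\in\vec{u}\}$ gives $\widehat{f}(\vec{u})\subseteq x$ iff $\{f(\vec{a})\midsp\vec{a}\in\vec{u}\}\subseteq x$, and the latter set containment is by definition the condition $\forall\vec{a}\in L^n\,(\vec{a}\in\vec{u}\lra f(\vec{a})\in x)$, once we recall that $\vec{a}\in\vec{u}$ abbreviates componentwise membership $a_j\in u_j$. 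Chaining these equivalences with the definition of $R$ yields the first claim.

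The statement for $S$ is handled symmetrically, replacing filters by ideals, $x$ by $y$, $f$ by $h$, and $\widehat{f}$ by $\widehat{h}$: here $\widehat{h}(\vec{v})$ is the \emph{ideal} generated by $\{h(\vec{a})\midsp\vec{a}\in\vec{v}\}$, and the identical generator argument shows $\widehat{h}(\vec{v})\subseteq y$ iff $h(\vec{a})\in y$ for every $\vec{a}\in\vec{v}$, as required.

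I expect no serious obstacle here; the lemma is essentially a reformulation of the defining equations \eqref{canonical relations defn} and \eqref{canonical point operators defn} via the generator fact above. The only point demanding a little care is the sorting: the components $u_j$ may be filters or ideals according to whether $i_j=1$ or $i_j=\partial$, but this is immaterial, since $\vec{a}\in\vec{u}$ means $a_j\in u_j$ regardless of sort and $f(\vec{a})$ is in all cases an element of $L$, so the generator argument applies uniformly across sort types.
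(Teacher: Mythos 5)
Your proof is correct and follows essentially the same route as the paper's: both unfold the definition $xR\vec{u}$ iff $\widehat{f}(\vec{u})\subseteq x$ and then invoke the fact that a generated filter (resp.\ ideal) is contained in a filter $x$ (resp.\ ideal $y$) exactly when its generating set $\{f(\vec{a})\midsp\vec{a}\in\vec{u}\}$ is. You merely make the universal property of the generated filter explicit where the paper leaves it as ``hence clearly,'' which is a harmless elaboration.
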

\begin{proof}
 By definition $xR\vec{u}$ holds iff $\widehat{f}(\vec{u})\subseteq x$, where $\widehat{f}(\vec{u})$, by its definition \eqref{canonical point operators defn} is the filter generated by the elements $f(\vec{a})$, for $\vec{a}\in\vec{u}$, hence clearly  $\vec{a}\in\vec{u}$ implies $f(\vec{a})\in x$. Similarly for the relation $S$.
\end{proof}

The next two lemmas were first stated and proven in \cite{kata2z}, but we present proofs here as well, to keep this article self-contained.
\begin{lemma}
\rm\label{same relation}
Where $R',S'$ are the Galois dual relations of the canonical relations $R,S$, $yR'\vec{u}$ holds iff $\widehat{f}(\vec{u})\upv y$ iff $\exists\vec{b}(\vec{b}\in\vec{u}\wedge f(\vec{b})\in y)$. Similarly, $xS'\vec{v}$ holds iff $x\upv \widehat{h}(\vec{v})$ iff $\exists\vec{e}(\vec{e}\in\vec{v}\wedge h(\vec{e})\in x)$.
\end{lemma}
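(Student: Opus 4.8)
The plan is to peel the definitions off one layer at a time, reducing the statement about the Galois dual relation $R'$ to a purely lattice-theoretic claim about the generated filter $\widehat{f}(\vec{u})$, and then to settle that claim using normality of $f$.

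First I would unwind the Galois dual. By Definition \ref{Galois dual relations}, $R'\vec{u}=(R\vec{u})'$, and by the proof of Lemma \ref{elementary props in canonical}(2) the section $R\vec{u}=\Gamma(\widehat{f}(\vec{u}))$ is the closed (filter) element generated by the single point $\widehat{f}(\vec{u})\in X$. Hence, using Lemma \ref{basic facts}(2) (namely $(\Gamma u)'=\{u\}'$), I obtain $R'\vec{u}=(\Gamma\widehat{f}(\vec{u}))'=\{\widehat{f}(\vec{u})\}\rperp$, so that $yR'\vec{u}$ holds iff $\widehat{f}(\vec{u})\upv y$. Since on the canonical frame $\upv$ is defined by $x\upv y$ iff $x\cap y\neq\emptyset$, this gives the equivalence of the first two clauses at once, and it remains only to show that $\widehat{f}(\vec{u})\cap y\neq\emptyset$ iff $\exists\vec{b}(\vec{b}\in\vec{u}\wedge f(\vec{b})\in y)$.

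For the reverse implication there is nothing to do: if $\vec{b}\in\vec{u}$ and $f(\vec{b})\in y$, then $f(\vec{b})$ is one of the generators of the filter $\widehat{f}(\vec{u})$, so $f(\vec{b})\in\widehat{f}(\vec{u})\cap y$. The content lies in the forward implication, and I expect this to be the main obstacle. The point is to show that the generating family $\{f(\vec{a})\midsp\vec{a}\in\vec{u}\}$ is downward directed in $\mathcal{L}$, so that the filter it generates is merely its upward closure. Given $\vec{a}_1,\ldots,\vec{a}_m\in\vec{u}$ I would build a single $\vec{b}\in\vec{u}$ coordinatewise: in a coordinate $j$ with $i_j=1$ (where $u_j$ is a filter) set $b_j=\bigwedge_i a_{i,j}\in u_j$, and in a coordinate with $i_j=\partial$ (where $u_j$ is an ideal) set $b_j=\bigvee_i a_{i,j}\in u_j$; in either case $b_j$ lies below each $a_{i,j}$ in the order of $\mathcal{L}^{i_j}$. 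Since $f$ is additive, hence monotone in each argument with respect to the sort orders, and its output sort is $1$, I get $f(\vec{b})\leq f(\vec{a}_i)$ in $\mathcal{L}$ for every $i$, and so $f(\vec{b})\leq\bigwedge_i f(\vec{a}_i)$. Thus every element of $\widehat{f}(\vec{u})$ dominates some single generator $f(\vec{b})$ with $\vec{b}\in\vec{u}$. Now if $c\in\widehat{f}(\vec{u})\cap y$, choose such a $\vec{b}$ with $f(\vec{b})\leq c$; as $y$ is an ideal (downward closed) and $c\in y$, I conclude $f(\vec{b})\in y$, as required.

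The claim for $S$ and $h$ is entirely dual: one unwinds $xS'\vec{v}=({}\rperp(S\vec{v}))$, again via Definition \ref{Galois dual relations} and Lemma \ref{basic facts}(2), to the condition $x\upv\widehat{h}(\vec{v})$, i.e. $x\cap\widehat{h}(\vec{v})\neq\emptyset$. One then runs the same directedness argument with the roles of filters/ideals and of meets/joins interchanged, now exploiting that the output sort of $h$ is $\partial$, so that $\widehat{h}(\vec{v})$ is the \emph{ideal} generated by $\{h(\vec{a})\midsp\vec{a}\in\vec{v}\}$ and this family is \emph{upward} directed; intersecting with the (upward closed) filter $x$ then yields some $\vec{e}\in\vec{v}$ with $h(\vec{e})\in x$.
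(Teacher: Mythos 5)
Your proposal is correct and follows essentially the same route as the paper: the first equivalence comes from unwinding the Galois dual against the closed section $R\vec{u}=\Gamma(\widehat{f}(\vec{u}))$, and the second from the same coordinatewise meet/join construction of a single witness $\vec{b}\in\vec{u}$ below the given generators in the sorted order, using (anti)monotonicity of $f$ and downward closure of the ideal $y$. The only cosmetic difference is that you phrase the key step as downward directedness of the generating family (so the generated filter is its upward closure), whereas the paper works directly with a finite meet of generators dominating the chosen element; the content is identical.
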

\begin{proof}
  By definition of the Galois dual relation, $yR'\vec{u}$ holds iff for all $x\in X$, if $xR\vec{u}$ obtains, then $x\upv y$. By definition of the canonical relation $R$, for any $x\in X$, $xR\vec{u}$ holds iff $\widehat{f}(\vec{u})\subseteq x$ and thereby $\widehat{f}(\vec{u})R\vec{u}$ always obtains. Hence, $yR'\vec{u}$ is equivalent to $\forall x\in X\;(\widehat{f}(\vec{u})\subseteq x\lra x\cap y\neq\emptyset)$, from which it follows that $\widehat{f}(\vec{u})\upv y$ iff $yR'\vec{u}$ obtains.

To show that $yR'\vec{u}$ holds  iff $\exists\vec{a}(\vec{a}\in\vec{u}\wedge f(\vec{a})\in y)$, since the direction from right to left is trivially true, assume $yR'\vec{u}$, or, equivalently by the argument given above, assume that $\widehat{f}(\vec{u})\upv y$, i.e. $\widehat{f}(\vec{u})\cap y\neq \emptyset$ and let $e\in \widehat{f}(\vec{u})\cap y$. By $e\in\widehat{f}(\vec{u})$ and definition of $\widehat{f}(\vec{u})$ as the filter generated by the set $\{f(\vec{a})\midsp\vec{a}\in\vec{u}\}$, let $\vec{a}^1,\ldots,\vec{a}^s$, for some positive integer $s$, be $n$-tuples of lattice elements (where $\vec{a}^r=(a^r_1,\ldots,a^r_n)$, for $1\leq r\leq s$) such that $f(\vec{a}^1)\wedge\cdots\wedge f(\vec{a}^s)\leq e$ and $a^r_j\in u_j$ for each $1\leq r\leq s$ and $1\leq j\leq n$. Recall that the distribution type of $f$ is $\delta(f)=(i_1,\ldots,i_n;1)$, where for $j=1,\ldots,n$ we have $i_j\in\{1,\partial\}$ and define elements $b_1,\ldots,b_n$ as follows.
\[
b_j=\left\{
\begin{array}{cl}
a^1_j\wedge\cdots\wedge a^s_j &\;\mbox{ if }\; i_j=1\\[1mm]
a^1_j\vee\cdots\vee a^s_j &\;\mbox{ if }\; i_j=\partial
\end{array}
\right.
\]
When $i_j=1$, $f$ is monotone at the $j$-th argument place, $u_j$ is a filter and $b_j\leq a^r_j\in u_j$, for all $r=1,\ldots,s$, so that $b_j=a^1_j\wedge\cdots\wedge a^s_j\in u_j$. Similarly, when $i_{j'}=\partial$, $f$ is antitone at the $j'$-th argument place, while $u_{j'}$ is an ideal, so that $b_{j'}=a^1_{j'}\vee\cdots\vee a^s_{j'}\in u_{j'}$. This shows that $\vec{b}\in\vec{u}$ and it remains to show that $f(\vec{b})\in y$. We argue that $f(\vec{b})\leq f(\vec{a}^1)\wedge\cdots\wedge f(\vec{a}^s)\leq e$ and the desired conclusion follows by the fact that $e\in y$, an ideal.

For any $1\leq r\leq s$, let $\vec{a}^r[b_j]_j^{i_j=1}$ be the result of replacing $a^r_j$ by $b_j$ in the tuple $\vec{a}^r$ and in every position $j$ from 1 to $n$ such that $i_j=1$ in the distribution type of $f$. Since $b_j\leq a^r_j$ and $f$ is monotone at any such $j$-th argument place, it follows that $f(\vec{a}^r[b_j]_j^{i_j=1})\leq f(\vec{a}^r)$, for all $1\leq r\leq s$.

In addition, for any $1\leq r\leq s$, let $\vec{a}^r[b_j]_j^{i_j=1}[b_{j'}]_{j'}^{i_{j'}=\partial}$ be the result of replacing $a^r_{j'}$ by $b_{j'}$ in the tuple $\vec{a}^r[b_j]_j^{i_j=1}$ and in every position $j'$ from 1 to $n$ such that $i_{j'}=\partial$ in the distribution type of $f$. Since $b_{j'}\geq a^r_{j'}$ and $f$ is antitone at any such $j'$-th argument place, it follows that $f(\vec{a}^r[b_j]_j^{i_j=1}[b_{j'}]_{j'}^{i_{j'}=\partial})\leq f(\vec{a}^r[b_j]_j^{i_j=1})\leq f(\vec{a}^r)$, for all $1\leq r\leq s$. Since $\vec{a}^r[b_j]_j^{i_j=1}[b_{j'}]_{j'}^{i_{j'}=\partial}=\vec{b}$ we obtain  that
\[
f(\vec{b})=f(\vec{a}^r[b_j]_j^{i_j=1}[b_{j'}]_{j'}^{i_{j'}=\partial})\leq f(\vec{a}^r[b_j]_j^{i_j=1})\leq f(\vec{a}^1)\wedge\cdots\wedge f(\vec{a}^s)\leq e
\]
hence $f(\vec{b})\in y$ and  this completes the proof, as far as the relation $R$ is concerned.

The argument for the relation $S$ is similar and can be safely left to the reader.
\end{proof}

We can now prove that the frame axiom FAx4 of Table \ref{axioms for the category of frames} also holds in the canonical frame.
\begin{lemma}\rm
\label{canonical frame properties prop}
 In the canonical frame, all sections of the Galois dual relations $R', S'$ of the canonical relations $R,S$ are Galois sets.
\end{lemma}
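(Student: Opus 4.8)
The plan is to handle separately the two kinds of sections of Definition~\ref{sections defn}. The \emph{output} sections are immediate: by Definition~\ref{Galois dual relations}, $R'\vec u=(R\vec u)'$, and since Lemma~\ref{elementary props in canonical} gives $R\vec u=\Gamma(\widehat f(\vec u))$, Lemma~\ref{basic facts}(2) yields $R'\vec u=(\Gamma\widehat f(\vec u))'=\{\widehat f(\vec u)\}'$, an open element of $\gphi$ and hence a Galois set; symmetrically for $S'\vec v$. So the content lies in the $k$-th (input) sections $yR'\vec u[\_]_k$ and $xS'\vec v[\_]_k$, and I treat $R$ in detail, $S$ being dual.

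Fix $y\in Y$, a hole position $k$, and the remaining coordinates $\vec u[\_]_k$. Using the explicit description of $R'$ from Lemma~\ref{same relation}, a point $p$ of sort $i_k$ lies in $yR'\vec u[\_]_k$ iff $\exists\vec b\in\vec u[p]_k$ with $f(\vec b)\in y$; unpacking the $k$-th coordinate this says exactly $p\cap E\neq\emptyset$, where
\[
E=\{c\in L\midsp \exists\,\vec b[\_]_k\in\vec u[\_]_k\;\; f(\vec b[c]_k)\in y\}
\]
is the set of admissible $k$-th witnesses. Hence $yR'\vec u[\_]_k=\{p\midsp p\cap E\neq\emptyset\}$. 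Once $E$ is shown to be an ideal (when $i_k=1$) or a filter (when $i_k=\partial$) of $\mathcal L$, this set is precisely the open element ${}\rperp\{E\}$ of $\gpsi$ (resp.\ $\{E\}\rperp$ of $\gphi$), hence a Galois set by Lemma~\ref{basic facts}; the degenerate cases ($E$ empty or improper) simply give the least, resp.\ greatest, Galois set.

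The crux is therefore to prove that $E$ is an ideal, resp.\ a filter. One half is pure monotonicity: if $i_k=1$ then $f$ is monotone in its $k$-th place and $y$ is a down-set, so $E$ is downward closed, while if $i_k=\partial$ then $f$ is antitone there and $E$ is upward closed. For the other half I would reuse the witness-amalgamation device from the proof of Lemma~\ref{same relation}: given $c,c'\in E$ with witness tuples $\vec b[\_]_k,\vec b'[\_]_k$, form a single tuple $\vec{\widetilde b}[\_]_k\in\vec u[\_]_k$ by taking, at each $j\neq k$, the meet $b_j\wedge b_j'$ inside the filter $u_j$ when $i_j=1$ and the join $b_j\vee b_j'$ inside the ideal $u_j$ when $i_j=\partial$; by the variance of $f$ this common tuple still witnesses both, i.e.\ $f(\vec{\widetilde b}[c]_k),f(\vec{\widetilde b}[c']_k)\in y$. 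Now \emph{normality} (complete additivity) of $f$ in the $k$-th argument closes the gap: for $i_k=1$, $f(\vec{\widetilde b}[c\vee c']_k)=f(\vec{\widetilde b}[c]_k)\vee f(\vec{\widetilde b}[c']_k)\in y$ since $y$ is join-closed, so $E$ is join-closed and hence an ideal; for $i_k=\partial$, additivity reads $f(\vec{\widetilde b}[c\wedge c']_k)=f(\vec{\widetilde b}[c]_k)\vee f(\vec{\widetilde b}[c']_k)\in y$, so $E$ is meet-closed and hence a filter. The relation $S$ is handled verbatim with $f,y$ replaced by $h,x$: since now the membership target $x$ is an \emph{up}-set, one amalgamates witnesses so as to make $h$ larger (again meets for sort $1$, joins for sort $\partial$), and additivity of $h$ gives that $E$ is an ideal when $t_k=1$ and a filter when $t_k=\partial$, whence the section is again an open element.

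The step I expect to be the main obstacle is exactly this amalgamation-plus-additivity argument for the binary closure of $E$: combining the two independent witness tuples for $c$ and $c'$ into one tuple lying in $\vec u[\_]_k$ without leaving $y$ (resp.\ $x$). This is the only place normality of the operators is used, and it is essential: without additivity in the $k$-th place $E$ would be a mere up-/down-set, and the section would collapse only to a union $\bigcup_{e\in E}\zeta_1(e)$ of clopen sets, which need not be Galois. In effect this computation is what makes axiom FAx4 hold canonically, forced by normality of the lattice operators together with the variance dictated by their distribution type.
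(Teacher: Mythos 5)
Your proposal is correct and follows essentially the same route as the paper: both hinge on Lemma \ref{same relation} and on the identical witness-amalgamation step (componentwise meets/joins at the filled positions, dictated by the variance of $f,h$) followed by additivity of the operator in the $k$-th coordinate. The only difference is organizational --- the paper proves that the double polar of a section is contained in the section by passing through the filter/ideal generated by your set $E$ and decomposing an element over finitely many generators, whereas you close $E$ under the binary lattice operation directly and identify the section outright as the open element ${}\rperp\{E\}$ (resp.\ $\{E\}\rperp$), a marginally sharper formulation obtained from the same computation.
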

\begin{proof}
There are two cases to handle, one for each  of the relations $R',S'$, with two subcases for each one, depending on whether $i_k$ is $1$, or $\partial$. The cases of the two relations are similar, we have presented the proof for the relation $R'$ in \cite{kata2z} and we detail here the other case, for the relation $S'$.\\

Case of the relation $S'$:

The section $S'\vec{v}=(S\vec{v})^\prime$ is a Galois (stable) set, by its definition.  Recall that the sort type of $S$ is $\sigma(S)=(\partial;t_1\cdots t_n)$, where $t_k\in\{1,\partial\}$ for each $1\leq k\leq n$, and that $S$ was defined given the lattice normal operator $h$, of distribution type $\delta(h)=(t_1,\ldots,t_n;\partial)$.

Let now $x\in X$ and consider any section $xS'\vec{v}[\_]_k$. We again distinguish the subcases $i_k=1$, or $i_k=\partial$. When $i_k=\partial$ (same as the output type of $h$), then $h$ is monotone at the $k$-th argument place and it distributes over finite lattice meets, whereas when $i_k=1$, then $h$ is antitone at the $k$-th argument place and it co-distributes over finite lattice joins, turning them to meets. Furtheremore, by Lemma \ref{same relation} of this proposition, $xS'\vec{y}$ holds iff $x\upv \widehat{h}(\vec{y})$ iff $\exists\vec{e}(\vec{e}\in\vec{y}\wedge h(\vec{e})\in x)$.\\

Subcase $i_k=\partial$):\\
Then $xS'\vec{y}[\_]_k\subseteq Y=\idl(\mathcal{L})$ and note that the output type of $h$ is also $t_{n+1}=\partial$.

Let $W=\{b\in L\midsp\exists \vec{e}[\_]_k\in \vec{y}[\_]_k\;\;h(\vec{e}[b]_k)\in x\}$ and $w$ be the filter generated by $W$. If $v$ is an ideal such that $xS'\vec{y}[v]_k$ holds, then by Lemma \ref{same relation} $x\upv \widehat{h}(\vec{y}[v]_k)$, equivalently, for some tuple of lattice elements $\vec{e}[b]_k\in\vec{y}[v]_k$ we have $h(\vec{e}[b]_k)\in x$. Then $b\in w\cap v$, i.e. $w\upv v$ and then $w\upv xS'\vec{y}[\_]_k$.

Let now $q$ be an ideal $q\in(xS'\vec{y}[\_]_k)^{\prime\prime}$. We show that $xS'\vec{y}[q]_k$ holds.

By the assumption on $q$ and the fact that $w\upv xS'\vec{y}[\_]_k$ we obtain $w\upv q$, i.e. there is some element $b\in w\cap q\neq\emptyset$.
By definition of $w$, there exist lattice elements $b_1,\ldots,b_s\in W$, for some positive integer $s$, such that $b_1\wedge\cdots\wedge b_s\leq b$. Since $b_r\in W$, for $1\leq r\leq s$, there exist tuples of lattice elements $\vec{c}^r[\_]_k=(c^r_1,\ldots,c^r_{k-1},\_,c^r_{k+1},\ldots,c^r_n)$, for $1\leq r\leq s$,  such that $h(\vec{c}^r[b_r]_k)\in x$. Define
\[
e_j=\left\{
\begin{array}{cl}
c^1_j\wedge\cdots\wedge c^s_j &\;\mbox{ if }\; t_j=1\\[1mm]
c^1_j\vee\cdots\vee c^s_j &\;\mbox{ if }\; t_j=\partial
\end{array}
\right.
\]
Considering the monotonicity properties of $h$ and using the notation introduced in the previous cases, observe that, for each $1\leq r\leq s$ we have
\[
h(\vec{e}[b_r]_k)=h(\vec{c}^r[b_r]_k[e_j]_j^{t_j=\partial}[e_{j'}]_{j'}^{t_{j'}=1})\geq h(\vec{c}^r[b_r]_k)\in x\in\filt(\mathcal{L})
\]
and so $\bigwedge_{r=1}^{r=s}h(\vec{e}[b_r]_k)\in x$. By the case assumption, $h$ is monotone and it distributes over finite meets at the $k$-th argument place, hence we obtain that
\[
h(\vec{e}[b]_k)\geq h(\vec{e}[b_1\wedge\cdots\wedge b_s]_k)=\bigwedge_{r=1}^{r=s}h(\vec{e}[b_r]_k)\in x
\]
By Lemma \ref{same relation}, given $\vec{e}[b]_k\in\vec{v}[q]_k$ and $h(\vec{e}[b]_k)\in x$ we conclude that $xS'\vec{v}[q]_k$ holds and this proves that the section $xS'\vec{v}[\_]_k$ is a Galois (co-stable) set.
\\

Subcase $i_k=1$):\\
Then $xS'\vec{y}[\_]_k\subseteq X=\filt(\mathcal{L})$.

Let $W=\{b\in L\midsp\exists \vec{e}[\_]_k\in \vec{y}[\_]_k\;\;h(\vec{e}[b]_k)\in x\}$ and $v$ be the ideal generated by $W$. If $z$ is any filter such that $xS'\vec{y}[z]_k$, then by Lemma \ref{same relation} there is a tuple of lattice elements $\vec{e}[b]_k\in\vec{y}[z]_k$ such that $h(\vec{e}[b]_k)\in x$. Thus $b\in z\cap v$ and this shows that $xS'\vec{y}[\_]_k\upv v$.

We now assume that $z\in(xS'\vec{y}[\_]_k)^{\prime\prime}$ and show that $xS'\vec{y}[z]_k$ holds.

By $z\in(xS'\vec{y}[\_]_k)^{\prime\prime}$ and $xS'\vec{y}[\_]_k\upv v$ it follows that $z\upv v$, so for some lattice element $b$ we have $b\in z\cap v\neq\emptyset$.

By definition of the ideal $v$, there exist elements $b_1,\ldots,b_s\in W$, for some positive integer $s$, such that $b\leq b_1\vee\cdots\vee b_s$. By definition of $W$ there exist tuples of lattice elements $\vec{c}^r[\_]_k$, with $1\leq r\leq s$, such that $h(\vec{c}^r[b_r]_k)\in x$ for each $1\leq r\leq s$. Define
\[
e_j=\left\{
\begin{array}{cl}
c^1_j\wedge\cdots\wedge c^s_j &\;\mbox{ if }\; t_j=1\\[1mm]
c^1_j\vee\cdots\vee c^s_j &\;\mbox{ if }\; t_j=\partial
\end{array}
\right.
\]
Considering the monotonicity properties of $h$ and using the notation introduced in the previous cases, observe that, for each $1\leq r\leq s$ we have
\[
h(\vec{e}[b_r]_k)=h(\vec{c}^r[b_r]_k[e_j]_j^{t_j=\partial}[e_{j'}]_{j'}^{t_{j'}=1})\geq h(\vec{c}^r[b_r]_k)\in x\in\filt(\mathcal{L})
\]
and so $\bigwedge_{r=1}^{r=s}h(\vec{e}[b_r]_k)\in x$. By the case assumption, $h$ is antitone and it co-distributes over finite joins at the $k$-th argument place, turning them to meets, hence we obtain that
\[
h(\vec{e}[b]_k)\geq h(\vec{e}[b_1\vee\cdots\vee b_s]_k)=\bigwedge_{r=1}^{r=s}h(\vec{e}[b_r]_k)\in x
\]
By Lemma \ref{same relation}, given $\vec{e}[b]_k\in\vec{y}[z]_k$ and $h(\vec{e}[b]_k)\in x$, we conclude that $xS'\vec{y}[z]_k$ holds and this shows that the section $xS'\vec{y}[\_]_k$ is a Galois (stable) set.
\end{proof}

The canonical frame for a lattice expansion $\mathcal{L}=(L,\leq,\wedge,\vee,0,1,f,h)$, where $\delta(f)=(i_1,\ldots,i_n;1)$ and $\delta(h)=(t_1,\ldots,t_n;\partial)$ ($i_j,t_j\in\{1,\partial\}$) is  the structure $\mathcal{L}_+=\mathfrak{F}=(\filt(\mathcal{L}),\upv,\idl(\mathcal{L}),R,S)$. By Proposition \ref{canonical frame properties prop}, the canonical relations $R,S$ are compatible with the Galois connection generated by $\upv\;\subseteq X\times Y$, in the sense that all sections of their Galois dual relations are Galois sets.
Set operators $\alpha_R, \eta_S$ are defined as in Section \ref{frames section} and we let $\overline{\alpha}_R,\overline{\eta}_S$ be the closures of their restrictions to Galois sets (according to their distribution types). Note that $\overline{\alpha}_R(\vec{F})\in\gpsi$, while $\overline{\eta}_S(\vec{G})\in\gphi$, given the output types of $f,h$ (alternatively, given the sort types of $R,S$).

It follows from Theorem \ref{distribution from stability thm} and Lemma \ref{canonical frame properties prop}, that the sorted operators $\overline{\alpha}_R,\overline{\eta}_S$ on Galois sets distribute over arbitrary joins of Galois sets (stable or co-stable, according to the sort types of $R,S$) in each argument place.

Note that $\overline{\alpha}_R,\overline{\eta}_S$ are sorted maps, taking their values in $\gpsi$ and $\gphi$, respectively. We define single-sorted maps on $\gpsi$ (analogously for $\gphi$) by composition with the Galois connection
\begin{eqnarray}
% \nonumber % Remove numbering (before each equation)
  \overline{\alpha}_f(A_1,\ldots,A_n) &=& \overline{\alpha}_R(\ldots,\underbrace{A_j}_{i_j=1},\ldots,\underbrace{A^\prime_r}_{i_r=\partial},\ldots) \hskip5mm(A_1,\ldots,A_n\in\gpsi)\label{single-sorted f} \\
  \overline{\eta}_h(B_1,\ldots,B_n) &=& \overline{\eta}_R(\ldots,\underbrace{B_r}_{i_r=\partial},\ldots,\underbrace{B^\prime_j}_{i_j=1},\ldots) \hskip5mm(B_1,\ldots,B_n\in\gphi)\label{single-sorted h}
\end{eqnarray}
Given that the Galois connection is a duality of Galois stable and Galois co-stable sets, it follows that the distribution type of $\overline{\alpha}_f$ is that of $f$ and that $\overline{\alpha}_f$ distributes, or co-distributes, over arbitrary joins and meets in each argument place, according to its distribution type, returning joins in $\gpsi$. Similarly, for $\overline{\eta}_h$.
Thus, the lattice representation maps $\zeta_1:(L,\leq,\wedge,\vee,0,1)\lra\gpsi$ and $\zeta_\partial:(L,\leq,\wedge,\vee,0,1)\lra\gphi$ are extended to maps $\zeta_1:\mathcal{L}\lra\gpsi$ and $\zeta_\partial:\mathcal{L}\lra \gphi$ by setting
\begin{eqnarray}
\zeta_1(f(a_1,\ldots,a_n))&=&\overline{\alpha}_f(\zeta_1(a_1),\ldots, \zeta_1(a_n))   %\nonumber\\
    \hskip2mm=\; \overline{\alpha}_R(\ldots,\underbrace{\zeta_1(a_j)}_{i_j=1},\ldots, \underbrace{\zeta_\partial(a_r)}_{i_r=\partial},\ldots)\nonumber\\
\zeta_\partial(f(a_1,\ldots,a_n)) &=& \left(\;\overline{\alpha}_f(\zeta_1(a_1),\ldots, \zeta_1(a_n))\;\right)^\prime
\label{canonical rep of lat ops f}\\[4mm]
\zeta_1(h(a_1,\ldots,a_n))&=& \left(\;\overline{\eta}_h(\zeta_\partial(a_1),\ldots,\zeta_\partial(a_n))\;\right)^\prime \nonumber\\
\zeta_\partial(h(a_1,\ldots,a_n))&=& \overline{\eta}_h(\zeta_\partial(a_1),\ldots,\zeta_\partial(a_n))\label{canonical rep of lat ops h}
\end{eqnarray}

\subsection{The Duals of Lattice Expansion Homomorphisms}
\label{homos section}
With the next Proposition, we verify that the frame axioms MAx1-MAx4 hold in the canonical frame construction, as well.

\begin{prop}\rm
Let $h:\mathcal{L}\lra\mathcal{L}^*$ be a homomorphism of normal lattice expansions of similarity type $\tau$, with corresponding normal operators $f_\sigma, f^*_\sigma$, for each $\sigma\in\tau$, and
$\mathfrak{F}=(X,I,Y,(R_\sigma)_{\sigma\in\tau})$, $\mathfrak{F}^*=(X^*,I^*,Y^*,(S_\sigma)_{\sigma\in\tau})$ their canonical dual frames, as defined in Section \ref{canonical frame section}. For $x^*\in X^*$ and $y^*\in Y^*$ define
 $p(x^*)=h^{-1}[x^*]=\{a\in\mathcal{L}\midsp h(a)\in x^*\}$, $q(y^*)=h^{-1}[y^*]=\{a\in\mathcal{L}\midsp h(a)\in y^*\}$. Then  $\pi=(p,q):(X^*,I^*,Y^*,(S_\sigma)_{\sigma\in\tau})\lra(X,I,Y,(R_\sigma)_{\sigma\in\tau})$ is a weak bounded morphism of their dual sorted residuated frames.
\end{prop}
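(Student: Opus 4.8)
The plan is to check, one at a time, the four defining conditions MAx1--MAx4 of a weak bounded morphism (Definition \ref{sorted weak bounded morphisms} together with condition MAx4). First note that $p$ and $q$ are well defined, since the inverse image of a filter (resp.\ ideal) of $\mathcal{L}^*$ under the lattice homomorphism $h$ is again a filter (resp.\ ideal) of $\mathcal{L}$, so that $p:X^*\lra X$ and $q:Y^*\lra Y$. Throughout I use that the frame order $\preceq$ is set inclusion and that $I$ is the complement of $\upv$, where $x\upv y$ iff $x\cap y\neq\emptyset$; each condition can then be read as a statement about (non)disjointness of a filter and an ideal. For MAx1 I argue contrapositively: if $p(x^*)\upv q(y^*)$, i.e.\ $h^{-1}[x^*]\cap h^{-1}[y^*]\neq\emptyset$, then any witness $a$ satisfies $h(a)\in x^*\cap y^*$, whence $x^*\upv y^*$; contraposing yields $x^*I^*y'\lra p(x^*)Iq(y^*)$.

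For MAx2 and MAx3, which are mutually dual, the witness is the filter (resp.\ ideal) of $\mathcal{L}^*$ generated by the $h$-image of the given point. For MAx2, assume $xIq(y')$, so that $h(a)\notin y'$ for every $a\in x$, and set $x'=\langle h[x]\rangle\in X^*$. Then $x\subseteq h^{-1}[x']=p(x')$, giving $x\leq p(x')$, and $x'\cap y'=\emptyset$: any $c\in x'$ lies above some $h(a)$ with $a\in x$ (using that $x$ is meet-closed and $h$ preserves meets), so $c\in y'$ would force $h(a)\in y'$ since ideals are down-sets, contradicting $xIq(y')$. Hence $x'I^*y'$, as required. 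MAx3 is symmetric, taking $y'$ to be the ideal of $\mathcal{L}^*$ generated by $h[y]$ and using that filters are up-sets.

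For MAx4 I translate both sides via Lemma \ref{unified relational}. Writing $g=f_\sigma$ and $g^*=f^*_\sigma$, the condition $\pi(v)R_\sigma\vec{u}$ holds iff $g(\vec{a})\in h^{-1}[v]$ for every $\vec{a}\in\vec{u}$, i.e.\ $h(g(\vec{a}))\in v$, and since $h$ is an expansion homomorphism $h(g(\vec{a}))=g^*(h[\vec{a}])$; likewise $vS_\sigma\vec{w}$ holds iff $g^*(\vec{c})\in v$ for all $\vec{c}\in\vec{w}$. The easy ($\Leftarrow$) direction is then immediate: if $\vec{u}\leq\pi[\vec{w}]$ (equivalently $h[u_j]\subseteq w_j$ for each $j$) and $vS_\sigma\vec{w}$, then for $\vec{a}\in\vec{u}$ the tuple $h[\vec{a}]$ lies in $\vec{w}$, so $g^*(h[\vec{a}])\in v$, giving $\pi(v)R_\sigma\vec{u}$. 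For the ($\Rightarrow$) direction I construct $\vec{w}$ componentwise, letting $w_j$ be the filter or the ideal of $\mathcal{L}^*$ (according as the sort $i_j$ is $1$ or $\partial$) generated by $h[u_j]$, which secures $\vec{u}\leq\pi[\vec{w}]$. To verify $vS_\sigma\vec{w}$ I take an arbitrary $\vec{c}\in\vec{w}$ and, exactly as in the proof of Lemma \ref{same relation}, extract from each $c_j\in w_j$ a single $b_j\in u_j$ with $h(b_j)\leq c_j$ when $i_j=1$ and $c_j\leq h(b_j)$ when $i_j=\partial$, using that $u_j$ is closed under the relevant finite meets/joins and that $h$ preserves them. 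The monotonicity profile of $g^*$ prescribed by its distribution type then gives $g^*(h[\vec{b}])\leq g^*(\vec{c})$ if the output sort $i_{n+1}$ is $1$ and $g^*(h[\vec{b}])\geq g^*(\vec{c})$ if it is $\partial$; since $g^*(h[\vec{b}])\in v$ (because $\vec{b}\in\vec{u}$ and $\pi(v)R_\sigma\vec{u}$) and $v$ is correspondingly an up-set or a down-set, I conclude $g^*(\vec{c})\in v$, hence $vS_\sigma\vec{w}$.

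The main obstacle is the ($\Rightarrow$) direction of MAx4: both the choice of witness tuple $\vec{w}$ as the generated filters/ideals of the images and the reduction of an arbitrary $\vec{c}\in\vec{w}$ to a single tuple $\vec{b}\in\vec{u}$ through the monotonicity properties of $g^*$ reproduce the bookkeeping of Lemma \ref{same relation}, where all the genuine content lies. By contrast, MAx1--MAx3 are routine (non)disjointness arguments that hinge only on $h$ being a lattice homomorphism and on filters and ideals being up- and down-closed.
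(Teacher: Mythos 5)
Your proof is correct and follows essentially the same route as the paper's: MAx1--MAx3 by the same disjointness arguments with the generated filter/ideal $\langle h[x]\rangle$, $\langle h[y]\rangle$ as witnesses, and MAx4 with the componentwise witness $w_j=\langle h[u_j]\rangle$ and the same reduction of an arbitrary $\vec{c}\in\vec{w}$ to a tuple $\vec{b}\in\vec{u}$ via the monotonicity profile of $f^*_\sigma$. The only (immaterial) difference is that you verify the relational condition MAx4 directly through Lemma \ref{unified relational}, whereas the paper routes it through the equation $\pi^{-1}\alpha_{R}(\Gamma\vec{u})=\alpha_{S}(\pi^{-1}[\Gamma\vec{u}])$ of Lemma \ref{condition for frame morphisms lemma} before unfolding to the identical pointwise computation.
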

\begin{proof}
  For the first condition (relation preservation, axiom MAx1 of Table \ref{axioms for the category of frames}), assume $x^*I^*y^*$, i.e. $x^*\cap y^*=\emptyset$. If $p(x^*)\cap q(y^*)\neq\emptyset$ and $a\in h^{-1}[x^*]\cap h^{-1}[y^*]$, then $h(a)\in x^*\cap y^*\neq\emptyset$, contradicting the hypothesis.

  For the second condition (axiom MAx2 of Table \ref{axioms for the category of frames}), let $x\in X, y^*\in Y^*$ be arbitrary and assume $xI q(y^*)$, which means $x\cap h^{-1}[y^*]=\emptyset$, i.e. $\forall a\in\mathcal{L}(a\in x\lra h(a)\not\in y^*)$.
  Let $x^*$ be the filter generated by the set $h[x]=\{h(a)\midsp a\in x\}$. Then $x\subseteq h^{-1}[x^*]=p(x^*)$. Suppose now that $e\in x^*\cap y^*$. Let $a_1,\ldots,a_n\in x$ such that $h(a_1)\wedge\cdots\wedge h(a_n)\leq e$. Since $h$ is a lattice homomorphism and $x$ is a filter, if $a=a_1\wedge\cdots\wedge a_n$, then $a\in x$ and $h(a_1)\wedge\cdots\wedge h(a_n)=h(a)\leq e\in y^*$, but $y^*$ is an ideal, hence $h(a)\in y^*$, contradiction. Hence it also holds for this $x^*$ that $x^*I^*y^*$.

  The proof of the third condition (axiom MAx3 of Table \ref{axioms for the category of frames}) is similar, but we detail it anyway. So let $x^*\in X^*, y\in Y$, arbitrary, and assume that $p(x^*)I y$, i.e. $h^{-1}[x^*]\cap y=\emptyset$. In other words, $\forall a\in\mathcal{L}(a\in y\lra h(a)\not\in x^*)$. Let $y^*$ be the ideal generated by the set $h[y]=\{h(e)\midsp e\in y\}$. Then $y\subseteq h^{-1}[y^*]=q(y^*)$. If $e\in x^*\cap y^*$, let $a_1,\ldots, a_n\in y$ such that $e\leq h(a_1)\vee\cdots\vee h(a_n)=h(a_1\vee\cdots \vee a_n)$, since $h$ is a homomorphism. Put $a=a_1\vee\cdots \vee a_n\in y$. Then $e\in x^*$, a filter, hence $h(a)\in x^*$, while $a\in y$, contradiction. Hence we obtain $x^*I^*y^*$ for the ideal $y^*$ defined above.

  Let $R_\sigma, S_\sigma$ be the canonical relations in the two frames, of the same sort type $\sigma=(i_{n+1};\vec{i_j})$, corresponding to the normal lattice operators $f_\sigma$ and $f^*_\sigma$. Let $\alpha_R,\alpha_S$ (we drop the subscript $\sigma$, for simplicity) be the induced image operators.

  We show that $\pi^{-1}$ is a homomorphism, i.e. that axiom MAx4 of Table \ref{axioms for the category of frames} holds, or, equivalently by Lemma \ref{condition for frame morphisms lemma}, that $\pi^{-1}\alpha_R(\Gamma \vec{u})=\alpha_S(\pi^{-1}[\Gamma\vec{u}])$.

  We calculate that
  \begin{tabbing}
  $v\in\pi^{-1}\alpha_R(\Gamma\vec{u})$\hskip2mm\= iff\hskip1mm\= $\pi(v)\in\alpha_R(\Gamma\vec{u})$\hskip1cm\=\\
  \>iff\> $h^{-1}[v]\in\alpha_R(\Gamma\vec{u})$ \> By definition of $\pi$ ($v$ a filter, or an ideal)\\
  \>iff\> $h^{-1}[v]\in R\vec{u}$ \> By Lemma \ref{alpha bar on closed} \\
  \>iff\> $h^{-1}[v]\in\Gamma(\widehat{f}\vec{u})$ \> By definition of the canonical relation\\
  \>iff\> $\widehat{f}\vec{u}\leq h^{-1}[v]$ \\
  \>iff\> $\widehat{f}\vec{u}\subseteq h^{-1}[v]$ \> the order is inclusion in canonical frame\\
  \>iff\> $\forall \vec{a}(\vec{a}\in \vec{u}\lra f(\vec{a})\in h^{-1}[v])$ \>\hskip1.5cm definition of $\widehat{f}$\\
  \>iff\> $\forall \vec{a}(\vec{a}\in \vec{u}\lra h(f(\vec{a}))\in v)$\\
  \>iff\> $\forall \vec{a}(\vec{a}\in \vec{u}\lra f^*(h[\vec{a}])\in v)$ \> \hskip1.5cm $h$ is a homomorphism
  \\[2mm]
  \>\> Note that $\vec{w}\in\pi^{-1}[\Gamma\vec{u}]$ iff $\vec{u}\subseteq\pi[\vec{w}]$ and since $\pi^{-1}[\Gamma\vec{u}]$ is a\\
  \>\> Galois set, by Lemma \ref{basic facts} we obtain $\pi^{-1}[\Gamma\vec{u}]=\bigcup_{\vec{u}\subseteq\pi[\vec{w}]}\Gamma\vec{w}$\\
  \>\> hence we compute\\[1mm]
  $\alpha_S\pi^{-1}[\Gamma\vec{u}]$\>=\> $\alpha_S\bigcup_{\vec{u}\subseteq\pi[\vec{w}]}\Gamma\vec{w}$ \>\ $=  \bigcup_{\vec{u}\subseteq\pi[\vec{w}]} \overline{\alpha}_S \Gamma\vec{w}$      \\
  \>=\> $\bigcup_{\vec{u}\subseteq\pi[\vec{w}]}S\vec{w}$ \>\  $=\bigcup_{\vec{u}\subseteq\pi[\vec{w}]}\Gamma(\widehat{f^*}\vec{w})$\\
  \>=\> $\bigcup_{h[\vec{u}]\subseteq\vec{w}}\Gamma(\widehat{f^*}\vec{w})$
  \end{tabbing}

   We first prove that  $\bigcup_{h[\vec{u}]\subseteq\vec{w}}\Gamma(\widehat{f^*}\vec{w})\subseteq \pi^{-1}\alpha_R(\Gamma\vec{u})$. To clarify notation, $h[\vec{u}]$ is the tuple $(h[u_1],\ldots,h[u_n])$, where $h[u_j]=\{h(e)\midsp e\in u_j\}$. Then by $h[\vec{u}]\subseteq\vec{w}$ we mean the (conjunction of the) pointwise inclusions $h[u_j]\subseteq w_j$.

  Let then $\vec{w}$ be such that $h[\vec{u}]\subseteq\vec{w}$ and assume that $\widehat{f^*}\vec{w}\subseteq v$. To show that $v\in \pi^{-1}\alpha_R(\Gamma\vec{u})$ we assume that $\vec{a}\in\vec{u}$ and prove that $f^*(h[\vec{a}])\in v$.

  By $\vec{a}\in\vec{u}$ and $h[\vec{u}]\subseteq\vec{w}$ we obtain $h[\vec{a}]\in\vec{w}$ (i.e. $h(a_j)\in w_j$, for all $j=1,\ldots,n$). By definition in the canonical frame $\widehat{f^*}\vec{w}$ is (the filter, or ideal, depending on the distribution type $\sigma$) generated by the set $\{f^*(\vec{e})\midsp \vec{e}\in\vec{w}\}$. By the hypothesis that $\widehat{f^*}\vec{w}\subseteq v$ we obtain that $\{f^*(\vec{e})\midsp \vec{e}\in\vec{w}\}\subseteq v$. Since $h[\vec{a}]\in\vec{w}$, we have in particular that $f^*(h[\vec{a}])\in v$, q.e.d.

  Conversely, we show that   $\pi^{-1}\alpha_R(\Gamma\vec{u})\subseteq\bigcup_{h[\vec{u}]\subseteq\vec{w}}\Gamma(\widehat{f^*}\vec{w})$.

  Let $v\in\pi^{-1}\alpha_R(\Gamma\vec{u})$ which means, by the calculation above, that $\pi(v)R\vec{u}$, equivalently in the canonical frame it means that $\{f^*(h[\vec{a}])\midsp\vec{a}\in\vec{u}\}\subseteq v$. The claim is that there is a tuple $\vec{w}=(w_1,\ldots,w_n)$, where $w_j$ is a filter if $i_j=1$ in the distribution type of $f,f^*$ and $w_j$ is an ideal when $i_j=\partial$, such that $h[\vec{u}]\subseteq\vec{w}$ and $\widehat{f^*}\vec{w}\subseteq v$.

  Define $w_j=\langle h[u_j]\rangle$ to be the filter of $\mathcal{L}^*$ generated by the set $h[u_j]$, if $i_j=1$ and otherwise let it be the ideal generated by the same set. Thus trivially $h[\vec{u}]\subseteq\vec{w}$ holds. By definition of $\widehat{f^*}$, it suffices to get its set of generators $\{f^*(\vec{e})\midsp \vec{e}\in\vec{w}\}$ to be contained in $v$.

  Let $\vec{e}\in\vec{w}$ so that for $1\leq j\leq n$, $e_j\in w_j$.

 If $i_j=1$, then $w_j$ is the filter generated by the set $h[u_j]$ (where $u_j$ is also a filter) and there are elements $a^j_1,\cdots,a^j_s\in u_j$ such that $ha^j_1\wedge\cdots\wedge ha^j_s\leq e_j$. Since $h$ is a homomorphism and $u_j$ is a filter, letting $a^j=a^j_1\wedge\cdots\wedge a^j_s\in u_j$ we obtain $ha^j\leq e_j$.

 If $i_j=\partial$, then $w_j$ is the ideal generated by the set $h[u_j]$ (where $u_j$ is also an ideal) and there are elements $a^j_1,\cdots,a^j_t\in u_j$ such that $e_j\leq ha^j_1\vee\cdots\vee ha^j_t$. Letting $a^j$ be the disjunction $a^j=a^j_1\vee\cdots\vee a^j_t$ we similarly obtain $e_j\leq ha^j$.

  If the output type $i_{n+1}=1$, then $v$ is a filter, $f,f^*$ are monotone at the $j$-th position, whenever $i_j=1$ and they are antitone at any position $i_{j'}$ with $i_{j'}=\partial$. Therefore, $f^*(ha^1,\ldots,ha^n)\leq f^*(e_1,\ldots,e_n)$, which we may compactly write as $f^*(h[\vec{a}])\leq f^*(\vec{e})$. From the hypothesis on $v$ we have that $f^*(h[\vec{a}])\in v$, which is a filter when $i_{n+1}=1$, hence also $f^*(\vec{e})\in v$.

  If the output type $i_{n+1}=\partial$, then $v$ is an ideal, $f,f^*$ are monotone at the $j$-th position, whenever $i_j=\partial$ and they are antitone at any position $i_{j'}$ with $i_{j'}=1$. Thereby, $f^*(e_1,\ldots,e_n)\leq f^*(ha^1,\ldots,ha^n)$, i.e. $f^*(\vec{e})\leq f^*(h[\vec{a}])\in v$, now an ideal, hence again $f^*(\vec{e})\in v$.
\end{proof}

\section{Stone Duality}
\label{stone section}
The results we have presented can be extended to a Stone duality, by combining with our results in \cite{sdl,sdl-exp}. The functor $\type{F}:{\bf NLE}_\tau\lra{\bf SRF}_\tau^{op}$ sends a normal lattice expansion $\mathfrak{L}$ in {\bf NLE}$_\tau$ to its dual frame $\mathcal{L}_+=\mathfrak{F}$ in {\bf SRF}$_\tau$ (detailed in Section \ref{canonical frame section}) and a lattice
expansion homomorphism $h$ to a weak bounded morphism $\pi$ (detailed in Section \ref{homos section}). Conversely, we have constructed a functor $\type{L}:{\bf SRF}_\tau^{op}\lra{\bf NLE}_\tau$, sending a frame $\mathfrak{F}$ in {\bf SRF}$_\tau$ to its complex algebra $\mathfrak{F}^+$ (Definition \ref{complex algebra defn}), which is a complete normal lattice expansion, and a weak bounded morphism $\pi$ to a complete normal lattice expansion homomorphism $\pi^{-1}$ (detailed in Section \ref{morphisms for relations section}). 

For a full Stone duality, a subcategory {\bf SRF}$^*_\tau$ will be identified, by strengthening the axiomatization of sorted residuated frames with relations. We replace axiom FAx2 with its stronger version FAx2*, we add axioms FAx5-FAx7 from \cite{sdl,sdl-exp} as well as axioms MAx5, MAx6. The full axiomatization of the category {\bf SRF}$^*_\tau$ is presented in Table \ref{full axiomatization}. Call a point $u\in X\cup Y$ {\em clopen} if $\Gamma u$ is clopen, i.e. if there exists a point $v$ of the dual sort  such that $\Gamma u=\{v\}'$.
\begin{table}[htbp]
\label{full axiomatization}
\caption{Axioms for the Subcategory {\bf SRF}$^*_\tau$ of {\bf SRF}$_\tau$}
\begin{enumerate}
  \item[FAx1)] The frame is separated
  \item[FAx2*)] For each $\sigma=(\vec{i_j};i_{n+1})$ in the similarity type $\tau$, each $\vec{u}\in\prod_{j=1}^{j=n}Z_{i_j}$, $R_\sigma\vec{u}$ is a closed element of $\mathcal{G}(Z_{i_{n+1}})$ and if all points $u_j$ are clopen, then $R_\sigma\vec{u}$ is a clopen element of $\mathcal{G}(Z_{i_{n+1}})$
  \item[FAx3)] For each $\sigma=(\vec{i_j};i_{n+1})$ in the similarity type $\tau$, each $w\in Z_{i_{n+1}}$, the $n$-ary relation $wR_\sigma$ is decreasing in every argument place
  \item[FAx4)] All sections of the Galois dual relations $R'_\sigma$ of $R_\sigma$, for each $\sigma$ in $\tau$, are Galois sets
  \item[FAx5)] Clopen sets are closed under finite intersections in each of $\gpsi,\gphi$
  \item[FAx6)] The family of closed sets, for each of $\gpsi,\gphi$, is the intersection closure of the respective set of clopens
  \item[FAx7)] Each of $X,Y$ carries a Stone topology generated by their respective families of clopen sets together with their complements
    \\[2mm]
    For $\pi=(p,q):(X_2,I_2,Y_2,(S_\sigma)_{\sigma\in \tau})\lra(X_1,I_1,Y_1,(R_\sigma)_{\sigma\in \tau})$
  \item[MAx1)] $\forall x'\in X_2\forall y'\in Y_2\;(x'I_2y'\lra \pi(x')I_1\pi(y'))$
  \item[MAx2)] $\forall x\in X_1\forall y'\in Y_2(xI_1 \pi(y')\lra\exists x'\in X_2(x\leq \pi(x')\wedge x'I_2y')  )$
  \item[MAx3)] $\forall x'\in X_2\forall y\in Y_1(\pi(x')I_1y\lra\exists y'\in Y_2(y\leq \pi(y')\wedge x'I_2y')$
  \item[MAx4)] for all $\vec{u}$ and $v$, $\pi(v)R_\sigma\vec{u}$ iff there exists $\vec{w}$ s.t. $\vec{u}\leq\pi[\vec{w}]$ and $vS_\sigma\vec{w}$
  \item[MAx5)] for all points $u$, $\pi^{-1}(\Gamma u)=\Gamma v$, for some (unique, by separation) $v$
  \item[MAx6)] for all points $u$, if $\Gamma u=\{v\}'$, for some (unique, by separation) $v$, then\\
   \hspace*{6mm}$\pi^{-1}(\Gamma u)=\left(\pi^{-1}(\Gamma v)\right)^\prime$  
   \\[2mm]
   We have glossed over sorts in order to get simple statements, which can easily be expressed in first-order terms. For example, MAx5 is the conjunction of the following two first-order statements (and similarly for the axiom MAx6).
\begin{description}
\item $\forall u\in X_1\exists v\in X_2 \forall w\in X_2\;(u\leq p(w)\longleftrightarrow v\leq w)$
\item $\forall u\in Y_1\exists v\in Y_2\forall w\in Y_2\;(u\leq q(w)\longleftrightarrow v\leq w)$
\end{description}
\end{enumerate}
\end{table}

% Furthermore, axiom FAx6 implies that each of $X, Y$ is a complete lattice.

For a frame $\mathfrak{F}$ of {\bf SRF}$^*_\tau$, the following result ensures that its lattice of clopen elements is an object of {\bf NLE}$_\tau$.

\begin{prop}\rm\label{restriction to clopen prop}
Let $\mathfrak{F}=(X,I,Y,(R_\sigma)_{\sigma\in\tau})$ be a sorted residuated frame in the category {\bf SRF}$^*_\tau$, with a relation $R_\sigma$ of sort $\sigma=(i_{n+1};\vec{i_j})$ for each $\sigma$ in $\tau$. 
Then the clopen elements in $\gpsi$ form a lattice and
$\overline{\alpha}^1_{R_\sigma}$  restricts to a normal operator  of distribution type $\delta=(\vec{i_j};i_{n+1})$ on the lattice of clopens.
\end{prop}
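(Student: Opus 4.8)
The plan is to prove three things in turn: that the clopen elements of $\gpsi$ are closed under the lattice operations of $\gpsi$, that $\overline{\alpha}^1_{R_\sigma}$ carries tuples of clopens to clopens, and that the distribution type is then inherited from the ambient operator on $\gpsi$.

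First I would record that the duality $(\;)':\gpsi\iso\gphi^\partial:(\;)'$ restricts to a bijection between the clopens of $\gpsi$ and those of $\gphi$. Indeed, if $A\in\gpsi$ is clopen then $A=\Gamma x=\{y\}'$ for some points $x,y$ with $x|y$, and by Lemma~\ref{basic facts} we have $A'=(\Gamma x)'=\{x\}'$ while also $A'=(\{y\}')'=\{y\}''=\Gamma y$; hence $A'=\Gamma y=\{x\}'$ exhibits $A'$ as a clopen element of $\gphi$, and symmetrically in the other direction. With this in hand the lattice structure is immediate: meets in $\gpsi$ are intersections, so closure of clopens under finite meets is exactly axiom FAx5; and since $(\;)'$ sends a join $A\vee C$ in $\gpsi$ to the meet $A'\cap C'$ in $\gphi$, closure under finite joins follows by applying FAx5 in $\gphi$ to the (clopen) sets $A',C'$ and transporting back along the bijection. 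Thus the clopens form a sublattice of $\gpsi$.

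Next I would show $\overline{\alpha}^1_{R_\sigma}$ restricts to a self-map of this sublattice. Each clopen argument $A_j$ is of the form $\Gamma u_j$ for a clopen point $u_j$, and by the bijection above $A_j'$ is clopen as well. Hence, after the case-split in the definition of $\overline{\alpha}^1_{R_\sigma}$ (which feeds $A_j$ where $i_j=1$ and $A_j'$ where $i_j=\partial$), the tuple passed to $\overline{\alpha}_{R_\sigma}$ is of the form $\Gamma\vec{u}$ with every $u_j$ a clopen point of the correct sort $Z_{i_j}$. By Lemma~\ref{alpha bar on closed} we then have $\overline{\alpha}_{R_\sigma}(\Gamma\vec{u})=R_\sigma\vec{u}$, and by axiom FAx2* this is a \emph{clopen} element of $\mathcal{G}(Z_{i_{n+1}})$ precisely because all the $u_j$ are clopen. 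When $i_{n+1}=1$ this is already the value $\overline{\alpha}^1_{R_\sigma}(\vec{A})$, clopen in $\gpsi$; when $i_{n+1}=\partial$ the value is $(R_\sigma\vec{u})'$, the image under $(\;)'$ of a clopen of $\gphi$, hence clopen in $\gpsi$. Either way the restriction is well-defined.

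Finally, for normality and the distribution type, I would invoke the observations summarized after Lemma~\ref{equiv defn of betakr} (resting on Theorem~\ref{distribution from stability thm} and Lemma~\ref{conjugate-residual}): $\overline{\alpha}^1_{R_\sigma}$ is already a completely normal operator of distribution type $\delta=(\vec{i_j};i_{n+1})$ on all of $\gpsi$. Since the clopens form a sublattice, the finite joins and meets of clopens computed in the clopen lattice agree with those computed in $\gpsi$; restricting an operator that distributes over arbitrary joins/meets to a sublattice closed under the relevant finite joins/meets therefore yields an operator that distributes over those finite joins/meets, with values landing in the clopens by the previous step, and normality is just the empty-family instance of that complete additivity. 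I expect the main obstacle to be the middle step: the reduction of an arbitrary clopen argument tuple to clopen generators $\Gamma\vec{u}$ and the consequent appeal to FAx2*, while keeping the sorting straight — in particular, the final application of the Galois duality forced by the output case $i_{n+1}=\partial$. The lattice-closure and distribution claims are comparatively routine once the clopen/duality bijection and the ambient complete additivity are available.
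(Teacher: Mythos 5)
Your proposal is correct and follows essentially the same route as the paper's proof: clopens form a lattice via FAx5 and the restriction of the Galois duality to clopens; the value on a clopen tuple reduces to $R_\sigma\vec{u}$ on clopen generators (you cite Lemma~\ref{alpha bar on closed}, the paper re-derives the same identity from FAx2* and FAx3), which is clopen by FAx2*; and the distribution type is inherited from Theorem~\ref{distribution from stability thm} via composition with the Galois connection. You merely spell out in more detail the clopen--clopen bijection under $(\;)'$, the join-closure of clopens, and the sorting in the case split, none of which changes the argument.
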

\begin{proof}
By the fact that the Galois connection restricts to a duality on clopens, the clopen elements of $\gpsi$ and $\gphi$ are  lattices, given axiom FAx5.
By definition and axiom FAx2*, $\overline{\alpha}_{R_\sigma}(\vec{F})=\bigvee_{\vec{u}\in\vec{F}}(R_\sigma\vec{u})^{\prime\prime}=\bigvee_{\vec{u}\in\vec{F}}R_\sigma\vec{u}= \bigvee_{\vec{u}\in\vec{F}}\Gamma w$, for some $w$ depending on $\vec{u}$ (which is unique, by the separation axiom FAx1).  In particular, $\overline{\alpha}_{R_\sigma}(\Gamma w_1,\ldots,\Gamma w_n)=\bigvee_{\vec{w}\leq\vec{u}}R_\sigma\vec{u}$. By axiom FAx3, if $\vec{w}\leq\vec{u}$, then $R_\sigma\vec{u}\subseteq R_\sigma\vec{w}$, hence $\overline{\alpha}_{R_\sigma}(\Gamma w_1,\ldots,\Gamma w_n)=R_\sigma\vec{w}$, a closed element, by axiom FAx2*. Again by axiom FAx2*, if all sets $\Gamma w_j$ are clopen elements, then also $R_\sigma\vec{w}$ is a clopen element. Hence $\overline{\alpha}_{R_\sigma}$ restricts to an operator on clopen elements. By Theorem \ref{distribution from stability thm}, using axiom FAx4, $\overline{\alpha}_{R_\sigma}$ (which is a sorted operator, of sort $(\vec{i_j};i_{n+1})$), distributes over arbitrary joins in each argument place. It follows that the single-sorted operator $\overline{\alpha}^1_{R_\sigma}$ obtained by appropriate composition with the Galois connection is a normal operator of distribution type $\delta=(\vec{i_j};i_{n+1})$ on the lattice of clopen elements in $\gpsi$.
\end{proof}

Propositions \ref{restriction to clopen prop} and the next Proposition verify that $\type{L}^*:{\bf SRF}^*_\tau\lra{\bf NLE}_\tau$ is a well defined functor.

\begin{prop}\rm
\label{star is functor}
Let $\type{L}^*$ be defined so that for a sorted residuated frame $\mathfrak{F}$ with relations in {\bf SRF}$^*_\tau$, $\type{L}^*(\mathfrak{F})$ is the normal lattice expansion of its stable clopen elements (the clopens of $\gpsi$).
If $\pi=(p,q):\mathfrak{F}_2\lra\mathfrak{F}_1$ is a morphism in {\bf SRF}$^*_\tau$, then $\type{L}^*(\pi)=\pi^{-1}$ is a homomorphism of normal lattice expansions  from clopens in $\mathcal{G}(X_1)$ to clopens of $\mathcal{G}(X_2)$.
\end{prop}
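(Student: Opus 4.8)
The plan is to verify that $\pi^{-1}$ (i) carries clopen stable sets to clopen stable sets, (ii) preserves the lattice structure, and (iii) commutes with each single-sorted operator $\overline{\alpha}^1_{R_\sigma}$, so that its restriction to clopens is a normal lattice expansion homomorphism.

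First I would collect the two structural facts that carry most of the weight. By Corollary \ref{weak bounded are complete lattice homos coro}, $\pi^{-1}$ is already a complete lattice homomorphism $\mathcal{G}(X_1)\to\mathcal{G}(X_2)$, and from the commuting squares at the close of Section \ref{polarity morphisms section} it commutes with the Galois maps, i.e. $\pi^{-1}(G')=(\pi^{-1}G)'$ for every Galois set $G$. Next I would show that $\pi^{-1}$ sends clopens to clopens using the new axioms MAx5 and MAx6. A clopen of $\mathcal{G}(X_1)$ is a closed element $\Gamma x$ with $\Gamma x=\{y\}'$ for some point $y$ of the dual sort. By MAx6, $\pi^{-1}(\Gamma x)=(\pi^{-1}(\Gamma y))'$, and by MAx5 applied to $y$ we have $\pi^{-1}(\Gamma y)=\Gamma v$ for some point $v$; since $(\Gamma v)'=\{v\}'$ by Lemma \ref{basic facts}, this gives $\pi^{-1}(\Gamma x)=\{v\}'$. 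As MAx5 also guarantees $\pi^{-1}(\Gamma x)=\Gamma z$ for some $z$, we conclude $\Gamma z=\{v\}'$, which is exactly the assertion that $\pi^{-1}(\Gamma x)$ is clopen. Restricting the complete lattice homomorphism to the clopen sublattices (which are lattices by Proposition \ref{restriction to clopen prop}, hence use the ambient meets and joins) then yields a map preserving $\wedge,\vee,0,1$, the bounds being the empty meet and join.

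For operator preservation I would argue first at the sorted level and then descend to the single-sorted operators. Axiom MAx4 is precisely condition \eqref{equiv condition for morphisms}, which by Lemma \ref{condition for frame morphisms lemma} is equivalent to \eqref{equation for morphisms} and so supplies the hypothesis of Proposition \ref{condition for frame morphisms prop}; that proposition gives $\pi^{-1}\overline{\alpha}_{R_\sigma}(\vec{F})=\overline{\alpha}_{S_\sigma}(\pi^{-1}[\vec{F}])$ for every tuple $\vec{F}$ of Galois sets of the appropriate sort. To transfer this to $\overline{\alpha}^1_{R_\sigma}$ I would unfold its defining composition with the Galois connection: writing $\vec{F}$ for the tuple obtained from $\vec{A}$ by priming the $\partial$-sorted entries, one has $\overline{\alpha}^1_{R_\sigma}(\vec{A})=\overline{\alpha}_{R_\sigma}(\vec{F})$ when $i_{n+1}=1$. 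Pushing $\pi^{-1}$ inside via the sorted identity and then using $\pi^{-1}(G')=(\pi^{-1}G)'$ on the primed coordinates shows $\pi^{-1}[\vec{F}]$ is exactly the tuple fed to $\overline{\alpha}_{S_\sigma}$ in the definition of $\overline{\alpha}^1_{S_\sigma}(\pi^{-1}[\vec{A}])$, yielding $\pi^{-1}(\overline{\alpha}^1_{R_\sigma}(\vec{A}))=\overline{\alpha}^1_{S_\sigma}(\pi^{-1}[\vec{A}])$; when $i_{n+1}=\partial$ one applies the commutation once more to the outer prime. Since $\pi^{-1}$ carries clopens to clopens and, by Proposition \ref{restriction to clopen prop}, $\overline{\alpha}^1_{R_\sigma}$ restricts to an operator on clopens, this establishes preservation on clopen arguments and completes the proof.

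The main obstacle I anticipate is the bookkeeping in the third step: keeping the sort indices $i_j$ straight while unfolding the single-sorted operator into $\overline{\alpha}_{R_\sigma}$ applied to the partially primed tuple, and checking that this primed tuple commutes correctly with $\pi^{-1}$ coordinatewise so that it matches the tuple used to define $\overline{\alpha}^1_{S_\sigma}$ on $\mathfrak{F}_2$. Everything else reduces to invoking the already-established commutation, closure, and distributivity facts.
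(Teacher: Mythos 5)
Your proposal is correct and takes essentially the same route as the paper: the paper's own proof is a one-line appeal to MAx5 and MAx6 for clopen-preservation, tacitly relying on Corollary \ref{weak bounded are complete lattice homos coro} and Proposition \ref{condition for frame morphisms prop} (via MAx4 and Lemma \ref{condition for frame morphisms lemma}) for the lattice- and operator-preservation, exactly the ingredients you invoke. You have simply made those implicit dependencies explicit, and your MAx5/MAx6 argument for clopen-preservation is the intended one.
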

\begin{proof}
Axioms MAx5 and MAx6 ensure that  $\type{L}^*(\pi)=\pi^{-1}$ maps clopens to clopens, hence it restricts to a homomorphism of the normal lattice expansions of clopens.
\end{proof}

We next verify that the functor $\type{F}:{\bf NLE}_\tau\lra{\bf SRF}_\tau$ is in fact a functor $\type{F}:{\bf NLE}_\tau\lra{\bf SRF}^*_\tau$.

\begin{prop}\rm
The canonical frame of a normal lattice expansion is a sorted residuated frame in the category ${\bf SRF}^*_\tau$.
\end{prop}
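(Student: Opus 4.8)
The plan is to verify that the canonical frame $\mathfrak{F}=(\filt(\mathcal{L}),\upv,\idl(\mathcal{L}),R,S)$ satisfies each of the object axioms FAx1, FAx2*, FAx3, FAx4, FAx5, FAx6, FAx7 listed in Table \ref{full axiomatization}. Most of the bookkeeping is already in place: FAx1 (separation) and FAx3 (the sections $xR$, $yS$ are decreasing) are established in Lemma \ref{elementary props in canonical}, and FAx4 (all sections of the Galois dual relations $R',S'$ are Galois sets) in Lemma \ref{canonical frame properties prop}. What remains is the strengthened axiom FAx2* and the three topological axioms FAx5--FAx7. Since FAx5--FAx7 concern only the base polarity $(\filt(\mathcal{L}),\upv,\idl(\mathcal{L}))$ and are untouched by the relations $R,S$, I would inherit them directly from the lattice representation and duality of \cite{sdl} reviewed in Section \ref{normal lat exp section}; the genuinely new content is FAx2*.

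For FAx2* the first step is to recall from the base construction that a point is \emph{clopen} exactly when it is a principal filter or a principal ideal: since $\Gamma x_a={}\rperp\{y_a\}=\{y_a\}'$ and $\Gamma y_a=\{x_a\}\rperp=\{x_a\}'$, the clopen elements of $\gpsi$ are precisely the sets $\zeta_1(a)=\Gamma x_a$, and by separation $\Gamma x$ is clopen iff $x$ is principal. Now suppose every $u_j$ in $\vec{u}$ is clopen, say $u_j=x_{a_j}$ when $i_j=1$ and $u_j=y_{a_j}$ when $i_j=\partial$. The claim is that $R\vec{u}=\Gamma(\widehat{f}(\vec{u}))$ is clopen, and by the identification above this reduces to showing that $\widehat{f}(\vec{u})$ is the principal filter $x_{f(\vec{a})}$. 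By \eqref{canonical point operators defn}, $\widehat{f}(\vec{u})$ is the filter generated by $\{f(\vec{b})\mid\vec{b}\in\vec{u}\}$, so it suffices to prove $f(\vec{a})\leq f(\vec{b})$ for every $\vec{b}\in\vec{u}$. For such $\vec{b}$ one has $a_j\leq b_j$ at every place with $i_j=1$ and $b_j\leq a_j$ at every place with $i_j=\partial$; since the distribution type $\delta(f)=(i_1,\ldots,i_n;1)$ makes $f$ monotone at the former places and antitone at the latter, each comparison raises the value of $f$, yielding $f(\vec{a})\leq f(\vec{b})$. Hence $x_{f(\vec{b})}\subseteq x_{f(\vec{a})}$ for all $\vec{b}\in\vec{u}$, the generated filter collapses to $x_{f(\vec{a})}$, and $R\vec{u}=\Gamma x_{f(\vec{a})}=\zeta_1(f(\vec{a}))$ is clopen. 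The argument for $S$ and the operator $h$ is identical, with filters and ideals interchanged according to the output sort $\partial$ of $h$.

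For the topological axioms I would argue as follows, all from \cite{sdl}. Because $\zeta_1$ is a lattice embedding with $\zeta_1(a)\cap\zeta_1(b)=\zeta_1(a\wedge b)$, the clopens are closed under finite intersections, giving FAx5. Every filter satisfies $x=\bigcap_{a\in x}x_a$, so each closed element $\Gamma x$ equals $\bigcap_{a\in x}\Gamma x_a$, an intersection of clopens, while conversely any intersection of clopens $\bigcap_{a\in A}\Gamma x_a$ is $\Gamma\langle A\rangle$ for the filter $\langle A\rangle$ generated by $A$, hence closed; this is FAx6. Finally, the Stone topology on each of $\filt(\mathcal{L}),\idl(\mathcal{L})$ generated by the clopens together with their complements is exactly the one constructed and verified in \cite{sdl}, which gives FAx7.

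The only step requiring real attention is FAx2*, and within it the one delicate point is the sign bookkeeping: a tuple of mixed-sort clopen arguments forces \emph{opposite} lattice inequalities on the two groups of coordinates, and one must check that monotonicity at the $i_j=1$ places and antitonicity at the $i_j=\partial$ places conspire in the \emph{same} direction so that the generated filter (or ideal, for $h$) is principal. Once clopen points are pinned down as principal filters and ideals, this collapse is a direct consequence of the distribution type, and the remaining axioms are either already discharged in Lemmas \ref{elementary props in canonical} and \ref{canonical frame properties prop} or carried over verbatim from the lattice duality of \cite{sdl}.
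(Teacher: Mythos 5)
Your proof is correct and follows the paper's own decomposition exactly: FAx1/FAx3 and FAx4 are discharged by Lemmas \ref{elementary props in canonical} and \ref{canonical frame properties prop}, FAx5--FAx7 are inherited from the lattice duality of \cite{sdl}, and FAx2* reduces to the point operators sending tuples of principal filters/ideals to a principal filter/ideal --- a fact the paper merely cites from \cite{dloa} (Lemma 6.7) but which you prove directly and correctly via the monotonicity pattern dictated by the distribution type. The only blemish is the formula $x=\bigcap_{a\in x}x_a$ in your FAx6 paragraph, which should read $x=\bigvee_{a\in x}x_a$ (the join in the lattice of filters); the conclusion $\Gamma x=\bigcap_{a\in x}\Gamma x_a$ that you draw from it is nevertheless the right one.
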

\begin{proof}
Axioms FAx1-FAx3 were verified in Lemma \ref{elementary props in canonical}. For the strengthened axiom FAx2$^*$, the proof follows from the fact, proven in \cite{dloa}, Lemma 6.7, that the point operators $\widehat{f}$ map principal filters/ideals to principal filters/ideals. Lemma \ref{canonical frame properties prop} verified axiom FAx4. Clopens in the canonical frame are the sets $\Gamma x_a$, for a principal filter $x_a$ (similarly for ideals) and $\Gamma x_a\cap\Gamma x_b=\Gamma x_{a\vee b}$, so axiom FAx5 holds. By join-density of principal filters (similarly for ideals) $x=\bigvee_{a\in x}x_a$ and then $\Gamma x=\Gamma(\bigvee_{a\in x}x_a)=\bigcap_{a\in x}\Gamma x_a$, hence axiom FAx6 is true for the canonical frame. Clopen sets $\Gamma x_a$ are precisely the sets in the image of the representation map $h(a)=\{x\in\filt(\mathcal{L})\midsp a\in x\}$ and it is by a standard argument in Stone duality that the topology generated by the subbasis $\mathcal{S}=\{h(a)\midsp a\in\mathcal{L}\}\cup\{-h(a)\midsp a\in\mathcal{L}\}$ is compact and totally separated and the compact-open sets (clopen, since the space is totally separated, hence Hausdorff) are precisely the sets $h(a)$, for a lattice element $a$. Proof details can be found in \cite{sdl}, Lemma 2.5, and thereby axiom FAx7 holds for the canonical frame as well, which is then an object of the category {\bf SRF}$^*_\tau$, as claimed.
\end{proof}

\begin{prop}\rm
Let $h:\mathcal{L}\lra\mathcal{L}^*$ be a morphism in the category {\bf NLE}$_\tau$ and $\pi=(p,q):\mathcal{L}^*_+\lra\mathcal{L}_+$ be the canonical {\bf SRF}$_\tau$ morphism, $p=h^{-1}:X^*\lra X$ and $q=h^{-1}:Y^*\lra Y$. Then for a filter $u\in X$, $\pi^{-1}(\Gamma u)$ is a closed element in $\mathcal{G}(X^*)$. Similarly for ideals.
\end{prop}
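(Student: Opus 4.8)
The plan is to unfold the definitions and recognise $\pi^{-1}(\Gamma u)$ as a principal upset $\Gamma v^*$ for a suitable filter $v^*$ of $\mathcal{L}^*$, which is exactly what it means for it to be a closed element of $\mathcal{G}(X^*)$. First I would recall from Lemma \ref{elementary props in canonical} that in the canonical frame the preorder $\preceq$ is set inclusion, so $\Gamma u=\{z\in\filt(\mathcal{L})\midsp u\subseteq z\}$, and that $p(x^*)=h^{-1}[x^*]$. Since $u\in X$, we have $\pi^{-1}(\Gamma u)=p^{-1}(\Gamma u)$, and for a filter $x^*\in X^*=\filt(\mathcal{L}^*)$ I would compute the chain of equivalences
\[
x^*\in\pi^{-1}(\Gamma u)\;\Llra\; u\subseteq h^{-1}[x^*]\;\Llra\;\forall a\in u\;(h(a)\in x^*)\;\Llra\; h[u]\subseteq x^*,
\]
where $h[u]=\{h(a)\midsp a\in u\}$.

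Next I would let $v^*=\langle h[u]\rangle$ be the filter of $\mathcal{L}^*$ generated by $h[u]$. Because $h$ is a lattice homomorphism and $u$ is a filter, $h[u]$ is closed under finite meets (as $h(a)\wedge h(b)=h(a\wedge b)$ with $a\wedge b\in u$), so $v^*$ is simply the upward closure $\{b\in\mathcal{L}^*\midsp\exists a\in u\; h(a)\leq b\}$. Since a filter $x^*$ contains $h[u]$ iff it contains the filter $v^*$ generated by it, the last condition above reads $v^*\subseteq x^*$, and therefore
\[
\pi^{-1}(\Gamma u)=\{x^*\in\filt(\mathcal{L}^*)\midsp v^*\subseteq x^*\}=\Gamma v^*,
\]
a closed (filter) element of $\mathcal{G}(X^*)$ as required. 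The case of ideals is entirely dual: for an ideal $u\in Y$ one gets $\pi^{-1}(\Gamma u)=q^{-1}(\Gamma u)=\{y^*\in\idl(\mathcal{L}^*)\midsp h[u]\subseteq y^*\}=\Gamma w^*$, where $w^*$ is now the ideal of $\mathcal{L}^*$ generated by $h[u]$ (join-closed since $u$ is an ideal).

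I do not expect a genuine obstacle here; the argument is a direct computation. The one point that requires a moment's care is that $h[u]$ need not itself be a filter (resp.\ ideal) of $\mathcal{L}^*$, so one must pass to the generated filter $v^*$ (resp.\ ideal $w^*$) before identifying $\pi^{-1}(\Gamma u)$ with a principal upset. If the working convention excludes improper filters/ideals, I would additionally note that $h[u]$ generates a proper filter whenever $0\notin h[u]$ and that the improper case corresponds to the top element of the complete lattice $X^*$, so the conclusion holds under either convention. Observe finally that this proposition is precisely the verification of axiom MAx5 for the canonical morphism $\pi$.
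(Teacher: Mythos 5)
Your proof is correct and follows exactly the paper's argument: the paper likewise sets $w_h=\langle h[u]\rangle$, the filter generated by $h[u]$, and identifies $\pi^{-1}(\Gamma u)=\Gamma w_h$ by direct computation (leaving the calculation you spell out as ``easily seen''). Your additional remarks on the generated filter versus $h[u]$ itself and on the ideal case are just the details the paper omits.
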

\begin{proof}
Let $w_h=\langle h[u]\rangle$ be the filter generated by the set $h[u]=\{h(a)\midsp a\in u\}$. By calculating $\pi^{-1}(\Gamma u)$, it is easily seen that $\pi^{-1}(\Gamma u)=\Gamma w_h$. Similarly for ideals.
\end{proof}

By the above arguments, $\type{L}^*,\type{F}$ are contravariant functors on our categories of interest $\type{F}:{\bf NLE}_\tau\leftrightarrows({\bf SRF}^*_\tau)^{op}$.

\begin{thm}[Stone Duality]\rm
$\mathcal{L}\iso\type{L}^*\type{F}(\mathcal{L})$ and $\mathfrak{F}\iso\type{F}\type{L}^*(\mathfrak{F})$.
\end{thm}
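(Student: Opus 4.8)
The plan is to establish the two object-level isomorphisms separately, in each case reducing to the underlying lattice duality of \cite{sdl} and then discharging the extra content contributed by the operators and the relations. For the first isomorphism $\mathcal{L}\iso\type{L}^*\type{F}(\mathcal{L})$, recall that $\type{F}(\mathcal{L})$ is the canonical frame $(\filt(\mathcal{L}),\upv,\idl(\mathcal{L}),R,S)$ and $\type{L}^*\type{F}(\mathcal{L})$ is its lattice of clopen stable sets equipped with the single-sorted operators extracted in Section \ref{canonical frame section} (equations \eqref{single-sorted f}--\eqref{single-sorted h}). The representation map $\zeta_1(a)=\Gamma x_a$ is, by the lattice duality of \cite{sdl} (using that FAx5--FAx7 hold canonically), a lattice isomorphism of $(L,\wedge,\vee,0,1)$ onto the clopen elements of $\gpsi$, so it remains only to check that $\zeta_1$ preserves operators, i.e. $\zeta_1(f(\vec{a}))=\overline{\alpha}_f(\zeta_1[\vec{a}])$ and dually for $h$.

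I would verify operator preservation by direct computation. Let $\vec{u}$ be the tuple of principal filters $x_{a_j}$ (for $i_j=1$) and principal ideals $y_{a_r}$ (for $i_r=\partial$), so that $\overline{\alpha}_f(\zeta_1[\vec{a}])=\overline{\alpha}_R(\Gamma\vec{u})$. By Lemma \ref{alpha bar on closed} this equals $R\vec{u}=\Gamma(\widehat{f}(\vec{u}))$. The key identity is then $\widehat{f}(\vec{u})=x_{f(\vec{a})}$: by the definition \eqref{canonical point operators defn} of $\widehat{f}$ its generators are the elements $f(\vec{b})$ with $b_j\geq a_j$ at type-$1$ places and $b_r\leq a_r$ at type-$\partial$ places, and by the monotonicity/antitonicity of $f$ in each coordinate every such $f(\vec{b})$ satisfies $f(\vec{b})\geq f(\vec{a})$, the minimum being attained at $\vec{b}=\vec{a}$; this is exactly the fact (proven in \cite{dloa}, Lemma 6.7) that the point operators send principal filters/ideals to principal filters/ideals. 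Hence $\overline{\alpha}_f(\zeta_1[\vec{a}])=\Gamma x_{f(\vec{a})}=\zeta_1(f(\vec{a}))$, matching the defining equations \eqref{canonical rep of lat ops f}--\eqref{canonical rep of lat ops h}, and $\zeta_1$ is an isomorphism in {\bf NLE}$_\tau$.

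For the second isomorphism $\mathfrak{F}\iso\type{F}\type{L}^*(\mathfrak{F})$, write $\mathcal{A}=\type{L}^*(\mathfrak{F})$ for the normal lattice expansion of clopen stable sets, an object of {\bf NLE}$_\tau$ by Proposition \ref{restriction to clopen prop}. Again I would first invoke \cite{sdl}: axioms FAx1 and FAx5--FAx7 guarantee that the underlying polarity $(X,\upv,Y)$ is isomorphic, as a sorted residuated frame, to the canonical polarity $(\filt(\mathcal{A}),\upv,\idl(\mathcal{A}))$ of $\mathcal{A}$, via the maps sending a point $x\in X$ to the filter $\{A\in\mathcal{A}\midsp x\in A\}$ of clopens containing it (and dually on $Y$). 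The substantive new step is to verify that under this polarity isomorphism each frame relation $R_\sigma$ is carried to the canonical relation $R_\sigma^c$ that $\type{F}$ builds from the operator $\overline{\alpha}^1_{R_\sigma}$ on $\mathcal{A}$. Here I would use axiom FAx2*: for clopen arguments $\vec{u}$ the section $R_\sigma\vec{u}$ is itself a clopen element whose value under the isomorphism is precisely $\overline{\alpha}^1_{R_\sigma}$ applied to the corresponding clopens, and comparing with the point-operator definition \eqref{canonical point operators defn}--\eqref{canonical relations defn} of $R_\sigma^c$ shows the two relations agree on clopen tuples; the agreement then propagates to arbitrary tuples by writing every closed element as an intersection of clopens (axiom FAx6), using that $wR_\sigma$ is decreasing in each argument (axiom FAx3) together with Lemma \ref{basic facts}.

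The main obstacle I anticipate is exactly this relation-recovery step in the second isomorphism: establishing $R_\sigma=R_\sigma^c$ on all (not merely clopen) tuples requires combining the clopen-to-clopen behaviour granted by FAx2* with the meet-generation supplied by FAx6 in a way that respects the sorting and the monotonicity direction of FAx3 at each argument place, and this must be carried out uniformly across all sort types $\sigma$ in $\tau$. Finally, although the theorem is stated at the level of objects, naturality of both families of isomorphisms is routine and can be indicated briefly: it reduces to naturality of the base lattice duality of \cite{sdl} together with the compatibility of weak bounded morphisms with the operators and canonical relations already established in Sections \ref{morphisms for relations section} and \ref{homos section}, in particular Proposition \ref{condition for frame morphisms prop} and Lemma \ref{condition for frame morphisms lemma}.
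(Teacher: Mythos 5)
Your proposal is correct and follows the route the paper itself intends: the paper's own ``proof'' of this theorem consists entirely of a deferral to \cite{sdl} (for the lattice part) and \cite{sdl-exp} (for the operators and relations), so what you have written is essentially a reconstruction of the argument those references supply. Your verification of $\zeta_1(f(\vec{a}))=\overline{\alpha}_f(\zeta_1[\vec{a}])$ via Lemma \ref{alpha bar on closed} and the identity $\widehat{f}(\vec{u})=x_{f(\vec{a})}$ on principal filters/ideals is exactly the computation underlying equations \eqref{canonical rep of lat ops f}--\eqref{canonical rep of lat ops h} and the appeal to Lemma 6.7 of \cite{dloa}. The one place where your sketch remains thinner than a full proof is the relation-recovery step $R_\sigma=R^c_\sigma$ in the second isomorphism, which you correctly identify as the substantive obstacle; carrying it out (clopen tuples via FAx2*, then propagation to closed elements via FAx6 and FAx3) is precisely the content of the corresponding argument in \cite{sdl-exp}, so flagging it rather than eliding it is the right call.
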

\begin{proof}
There is nothing new to prove here and 
details for the proof of the isomorphisms can be found in \cite{sdl} already for lattices and in \cite{sdl-exp} for normal lattice expansions.
\end{proof}

\section{Concluding Remarks and Further Research}
\label{conc}
We revisited in this article the question of Stone duality for lattices with quasioperators (normal lattice expansions, in our terminology), first addressed in \cite{sdl-exp}.  We improved on the results of \cite{sdl-exp} in the following sense.

First, the category of frames is specified with a simpler axiomatization on relations, to ensure that the induced operators are completely normal lattice operators. Gehrke's notion of stability of sections \cite{mai-gen} was used and, by introducing a notion of sorted conjugate operators we argued that the induced sorted operators, for any sort type $\sigma=(i_{n+1};\vec{i_j})$ of the relations, distribute over arbitrary joins of Galois sets (stable, or co-stable, according to the sort type of the relation).  By composition with the Galois connection, completely normal single-sorted operators of distribution type $\delta=(\vec{i_j};i_{n+1})$ are obtained. The stability requirement implies complete distribution, but as far as we can see the two are not equivalent.

Second, frame morphisms in \cite{sdl-exp} were tailored to the need to prove a Stone duality and were thus keyed only to the requirement that their inverses preserve clopens. Based on Goldblatt's recent notion of bounded morphisms for polarities \cite{goldblatt-morphisms2019} we defined weak bounded morphisms for polarities (equivalently, sorted residuated frames) and extended to the case of frames with relations. The extended definition for morphisms is different and simpler than Goldblatt's. The extended notion of a frame morphism was shown to satisfy the requirement that its inverse is a homomorphism of the complex algebras of the frames. To ensure that a Stone duality result is provable, we strengthened the axiomatization of frame relations and morphisms in Section \ref{stone section}.

Third, we expanded on our results in \cite{redm} by showing that completely normal operators on the stable sets lattice of a frame are obtained by taking the closure of the restriction of classical, sorted image operators to Galois sets. This provides a proof, at the representation level, that the logics of normal lattice expansions are fragments of corresponding sorted residuated polymodal logics (their modal companions).

This latter opens up some new problems to investigate, given the results established in this article. Essentially, the research direction opened is one of reducing problems on non-distributive logics (via translation to their modal companions) to problems on sorted residuated polymodal logics. We leave these issues for further research (initiated in \cite{pll7,redm}).

\bibliographystyle{plain}

%\bibliography{references,hartonasRef,int,FuzzyRoughFCA, substructural,relevance-Dunn}

\begin{thebibliography}{10}

\bibitem{iulg-bounded}
Gerard Allwein and Chrysafis Hartonas.
\newblock Duality for bounded lattices.
\newblock Technical Report IULG-93-25, Indiana University Logic Group, 1993.

\bibitem{birkhoff}
Garrett Birkhoff.
\newblock {\em Lattice theory}.
\newblock American Mathematical Society Colloquium Publications 25, American
  Mathematical Society, Providence, Rhode Island, third edition, 1979.
\newblock (corrected reprint of the 1967 third edition).

\bibitem{craig-phd}
Andrew Craig.
\newblock {\em Canonical Extensions of Bounded Lattices and Natural Duality for
  Default Bilattices}.
\newblock PhD thesis, University of Oxford, 2012.

\bibitem{craig}
Andrew Craig, Maria~Joao Gouveia, and Miroslav Haviar.
\newblock {T}i{RS} graphs and {T}i{RS} frames: a new setting for duals of
  canonical extensions.
\newblock {\em Algerba Universalis}, 74(1-2), 2015.

\bibitem{craig2}
Andrew Craig, Miroslav Haviar, and Hilary Priestley.
\newblock A fresh perspective on canonical extensions for bounded lattices.
\newblock {\em Applied Categorical Structures}, 21(6):725--749, 2013.

\bibitem{natural-duality}
B.A. Davey and H.~Werner.
\newblock Dualities and equivalences for varieties of algebras.
\newblock In A.~P. Huhn and E.~T. Schmidt, editors, {\em Contributions to
  Lattice Theory, Szeged, 1980}, page 101–275. Colloq. Math. Soc. János
  Bolyai 33, North-Holland, 1983.

\bibitem{dunn-ggl}
J.~Michael Dunn.
\newblock Gaggle theory: An abstraction of {G}alois coonections and resuduation
  with applications to negations and various logical operations.
\newblock In {\em Logics in AI, Proceedings of European Workshop JELIA 1990,
  LNCS 478}, pages 31--51, 1990.

\bibitem{wille2}
Bernhard Ganter and Rudolph Wille.
\newblock {\em Formal Concept Analysis: Mathematical Foundations}.
\newblock Springer, 1999.

\bibitem{mai-gen}
Mai Gehrke.
\newblock Generalized {K}ripke frames.
\newblock {\em Studia Logica}, 84(2):241--275, 2006.

\bibitem{mai-harding}
Mai Gehrke and John Harding.
\newblock Bounded lattice expansions.
\newblock {\em Journal of Algebra}, 238:345--371, 2001.

\bibitem{mai-jons}
Mai Gehrke and Bjarni J\'{o}nsson.
\newblock Bounded distributive lattice expansions.
\newblock {\em Math. Scand.}, 94(2):13--45, 2004.

\bibitem{goldb}
Robert Goldblatt.
\newblock Semantic analysis of orthologic.
\newblock {\em Journal of Philosophical Logic}, 3:19--35, 1974.

\bibitem{goldblatt-morphisms2019}
Robert Goldblatt.
\newblock Morphisms and duality for polarities and lattices with operators.
\newblock {\em FLAP}, 7:1017--1070, 2020.

\bibitem{grishin}
V.N. Grishin.
\newblock On a generalization of the {A}jdukiewicz-{L}ambek system.
\newblock In A.I. Mikhailov, editor, {\em Studies in Nonclassical Logics and
  Formal Systems}, pages 315--334. Nauka, 1983.

\bibitem{dloa}
Chrysafis Hartonas.
\newblock Duality for lattice-ordered algebras and for normal algebraizable
  logics.
\newblock {\em Studia Logica}, 58:403--450, 1997.

\bibitem{sdl-exp}
Chrysafis Hartonas.
\newblock Stone {D}uality for {L}attice {E}xpansions.
\newblock {\em Oxford Logic Journal of the {IGPL}}, 26(5):475--504, 2018.

\bibitem{discr}
Chrysafis Hartonas.
\newblock Discrete duality for lattices with modal operators.
\newblock {\em J. Log. Comput.}, 29(1):71--89, 2019.

\bibitem{pll7}
Chrysafis Hartonas.
\newblock Lattice logic as a fragment of (2-sorted) residuated modal logic.
\newblock {\em Journal of Applied Non-Classical Logics}, 29(2):152--170, 2019.

\bibitem{redm}
Chrysafis Hartonas.
\newblock Modal translation of substructural logics.
\newblock {\em Journal of Applied Non-Classical Logics}, 30(1):16--49, 2020.

\bibitem{kata2z}
Chrysafis Hartonas.
\newblock Reconcilliation of approaches to the semantics of logics without
  distribution, September 2021.
\newblock http://arxiv.org/abs/2109.11597.

\bibitem{iulg}
Chrysafis Hartonas and J.~Michael Dunn.
\newblock Duality theorems for partial orders, semilattices, galois connections
  and lattices.
\newblock Technical Report IULG-93-26, Indiana University Logic Group, 1993.

\bibitem{sdl}
Chrysafis Hartonas and J.~Michael Dunn.
\newblock Stone duality for lattices.
\newblock {\em Algebra Universalis}, 37:391--401, 1997.

\bibitem{hartung}
Gerd Hartung.
\newblock A topological representation for lattices.
\newblock {\em Algebra Universalis}, 29:273--299, 1992.

\bibitem{jt1}
Bjarni J\'{o}nsson and Alfred Tarski.
\newblock Boolean algebras with operators {I}.
\newblock {\em American Journal of Mathematics}, 73:891--939, 1951.

\bibitem{jt2}
Bjarni J\'{o}nsson and Alfred Tarski.
\newblock Boolean algebras with operators {II}.
\newblock {\em American Journal of Mathematics}, 74:8127--162, 1952.

\bibitem{residBA}
Bjarni J\'{o}nsson and Constantine Tsinakis.
\newblock Relation algebras as residuated boolean algebras.
\newblock {\em Algebra Universalis}, 30:469--478, 1993.

\bibitem{ewa-rfca}
Ewa Or{\l}owska.
\newblock Modal logic in the theory of information systems.
\newblock {\em Mathematical Logic Quarterly}, 30(13‐16):213--222.

\bibitem{plo}
Miroslav Plo\v{s}\v{c}ica.
\newblock A natural representation of bounded lattices.
\newblock {\em Tatra Mountains Math. Publ.}, 5:75--88, 1995.

\bibitem{hilary}
Hilary Priestley.
\newblock Representation of distributive lattices by means of ordered {S}tone
  spaces.
\newblock {\em Bull. Lond. Math. Soc.}, 2:186--190, 1970.

\bibitem{Sofronie-Stokkermans00}
Viorica Sofronie{-}Stokkermans.
\newblock Duality and canonical extensions of bounded distributive lattices
  with operators, and applications to the semantics of non-classical logics
  {I}.
\newblock {\em Studia Logica}, 64(1):93--132, 2000.

\bibitem{Sofronie-Stokkermans00a}
Viorica Sofronie{-}Stokkermans.
\newblock Duality and canonical extensions of bounded distributive lattices
  with operators, and applications to the semantics of non-classical logics
  {II}.
\newblock {\em Studia Logica}, 64(2):151--172, 2000.

\bibitem{stone2}
Marshall~Harvey Stone.
\newblock Topological representation of distributive lattices and brouwerian
  logics.
\newblock {\em Casopsis pro Pestovani Matematiky a Fysiky}, 67:1--25, 1937.

\bibitem{stone1}
Marshall~Harvey Stone.
\newblock The representation of boolean algebras.
\newblock {\em Bull. Amer. Math. Soc.}, 44:807--816, 1938.

\bibitem{Suzuki-polarity-frames}
Tomoyuki Suzuki.
\newblock On polarity frames: Applications to substructural and lattice-based
  logics.
\newblock In Rajeev Gor\'e, Barteld Kooi, and Agi Kurucz, editors, {\em
  Advances in Modal Logic, Volume 10}, pages 533--552. CSLI Publications, 2014.

\bibitem{urq}
Alasdair Urquhart.
\newblock A topological representation of lattices.
\newblock {\em Algebra Universalis}, 8:45--58, 1978.

\bibitem{rough-vakarelov}
Dimiter Vakarelov.
\newblock {\em Information Systems, Similarity Relations and Modal Logics},
  chapter~16, pages 492--550.
\newblock Studies in Fuzziness and Soft Computing. Heidelberg: Physica-Verlag,
  1998.

\end{thebibliography}

\end{document}